\let\expandafter\oldproof\csname\string\proof\endcsname
\let\oldendproof\endproof \renewenvironment{proof}[1][\proofname]{
\oldproof[\ttfamily\scshape \bf #1.] }{\oldendproof}
\def\emp{\emptyset}  
\def\dom{{\rm dom}\,}  \def\epi{{\rm epi\,}}
  \def\min{\mbox{\rm minimize}}
   \def\B{\mathbb
B}   
\def\ox{\overline{x}} \def\oy{\overline{y}} \def\oz{\overline{z}}
\def\tto{\rightrightarrows} 
\def\Tilde{\widetilde} \def\Bar{\overline} \def\ra{\rangle}
\def\la{\langle}  \def\epsilon{\varepsilon}
\def\ox{\bar{x}} \def\oy{\bar{y}} \def\oz{\bar{z}}
 \def\ov{\bar{v}}
\def\gph{\mbox{\rm gph}\,} \def\epi{\mbox{\rm epi}\,}
 \def\dom{\mbox{\rm dom}\,}
 \def\dn{\downarrow} \def\O{\Omega}
\def\ph{\varphi} \def\emp{\emptyset} \def\st{\stackrel}
\def\oR{\Bar{\R}}   
\def\al{\alpha}  \def\N{{\rm I\!N}}
\def\R{\mathbb{R}}
\begin{document}
\newtheorem{Assumption}{Assumption}
\newtheorem{Theorem}{Theorem}[section]
\newtheorem{Proposition}[Theorem]{Proposition}
\newtheorem{Remark}[Theorem]{Remark}
\newtheorem{Lemma}[Theorem]{Lemma}
\newtheorem{Corollary}[Theorem]{Corollary}
\newtheorem{Definition}[Theorem]{Definition}
\newtheorem{Example}[Theorem]{Example}
\newtheorem{Algorithm}[Theorem]{Algorithm}
\newtheorem{Problem}[Theorem]{Problem}
\renewcommand{\theequation}{{\thesection}.\arabic{equation}}
\renewcommand{\thefootnote}{\fnsymbol{footnote}} \begin{center}
{\bf\Large  {Generalized Damped Newton Algorithms in Nonsmooth Optimization via Second-Order Subdifferentials}}\\[1ex]
Pham Duy Khanh\footnote{Department of Mathematics, HCMC University
of Education, Ho Chi Minh City, Vietnam.}, Boris S.
Mordukhovich\footnote{Department of Mathematics, Wayne State
University, Detroit, Michigan, USA. E-mail: boris@math.wayne.edu. Research of this author was partly supported by the US National Science Foundation under grants DMS-1512846 and DMS-1808978, by the US Air Force Office of Scientific Research under grant \#15RT0462, and by the Australian Research Council under Discovery Project DP-190100555.},
Vo Thanh Phat \footnote{Department of Mathematics, HCMC
University of Education, Ho Chi Minh City, Vietnam and Department of
Mathematics, Wayne State University, Detroit, Michigan, USA.
E-mails: phatvt@hcmue.edu.vn; phatvt@wayne.edu. Research of this author was partly supported by the US National Science Foundation under grants DMS-1512846 and DMS-1808978, and by the US Air Force Office of Scientific Research under grant \#15RT0462.}, Dat Ba Tran \footnote{Department of
Mathematics, Wayne State University, Detroit, Michigan, USA.
E-mails:  tranbadat@wayne.edu. Research of this author was partly supported by the US National Science Foundation under grant DMS-1808978.}.

 \end{center}
\small{\bf Abstract}.  The paper proposes and develops new globally convergent algorithms of the generalized damped Newton type for solving important classes of nonsmooth optimization problems. These algorithms are based on the theory and calculations of second-order subdifferentials of nonsmooth functions with employing the machinery of second-order variational analysis and generalized differentiation. First we develop a globally superlinearly convergent damped Newton-type algorithm for the class of continuously differentiable functions with Lipschitzian gradients, which are nonsmooth of second order. Then we design such a globally convergent algorithm to solve a  structured class of nonsmooth quadratic composite problems with extended-real-valued cost functions, which typically arise in machine learning and statistics. Finally, we present the results of numerical experiments and compare the performance of our main algorithm applied to an important class of Lasso problems with those achieved by other first-order and second-order optimization algorithms.\\[1ex]
{\bf Keywords}. Variational analysis and nonsmooth optimization, damped Newton methods, global convergence, tilt stability of minimizers, superlinear convergence, Lasso problems.\vspace*{-0.2in}

\normalsize\section{Introduction}\label{intro}

This paper is mainly devoted to the design, justification, and applications of {\em globally convergent} Newton-type algorithms to   solve  nonsmooth (of the first or second order) optimization problems  in finite-dimensional spaces. Considering the unconstrained optimization problem
\begin{equation}\label{opproblem}
\min\;\varphi(x)\;\text{ subject to }\;x\in\R^n
\end{equation}
with a continuously differentiable (${\cal C}^1$-smooth) cost function $\ph\colon\R^n\to\R$, recall that one of the most natural and efficient approaches to solve \eqref{opproblem} globally is by using {\em line search methods}; see, e.g., \cite{JPang,Solo14,nw} and the references therein. Given a starting point $x^0\in\R^n$, such methods construct an iterative procedure of the form
\begin{equation}\label{linealgorithm}
x^{k+1}:=x^k+\tau_k d^k\quad\text{for all }\;k\in\N:=\{1,2,\ldots\},
\end{equation}
where $\tau_k\ge 0$ is a {\em step size} at iteration $k$, and where $d^k\ne 0$ is a {\em search direction}. The precise choice of $d^k$ and $\tau_k$ at each iteration in \eqref{linealgorithm} distinguishes one algorithm from another. The main goal of line search methods is to construct a sequence of iterates $\{x^k\}$ such that the corresponding sequence $\{\varphi(x^k)\}$ is decreasing. Recall also that the condition $\langle\nabla\varphi(x^k),d^k\rangle<0$ on $d^k$ ensures that it is a {\em descent direction} at $x^k$, i.e., there exists $\bar{\tau}_k\in(0,1]$ such that $\varphi(x^k+\tau d^k)<\varphi(x^k)$ for all $\tau\in[0,\bar{\tau}_k]$. There are many choices of the direction $d^k$ that satisfies this condition. For instance, a classical choice for the search direction is $d^k:=-\nabla\varphi(x^k)$ when the resulting algorithm is known as the {\em gradient algorithm} or {\em steepest descent method}; see \cite{BeckAl,Boyd,JPang,Solo14,nest,pol} for more details and impressive further developments of gradient and subgradient methods.

If $\varphi$ is twice continuously differentiable (${\cal C}^2$-smooth) and the Hessian matrix $\nabla^2\varphi(x^k)$ is positive-definite for each $k\in\N$, then another choice of search directions in \eqref{linealgorithm} is provided by solving the linear equation
\begin{equation}\label{Newtonequa}
-\nabla\varphi(x^k)=\nabla^2\varphi(x^k)d^k,
\end{equation}
where $d^k$ is known as a {\em Newton direction}. In this case, algorithm \eqref{linealgorithm} with the {\em backtracking line search} is called the {\em damped/guarded Newton method} \cite{BeckAl,Boyd} to distinguish it from the {\em pure Newton method}, which uses a fixed step size $\tau=1$; see, e.g., the books \cite{Donchev09,JPang,Solo14,Klatte} with the comprehensive commentaries and references therein. It has been well recognized that the latter method exhibits a {\em local} convergence with {\em quadratic} rate.

There exist various extensions of the pure Newton method to solve unconstrained optimization problems \eqref{opproblem}, where the cost functions $\ph$ are not ${\cal C}^2$-smooth but belong merely to the class ${\cal C}^{1,1}$ of continuously differentiable functions with Lipschitz continuous gradients, i.e., being nonsmooth of second order. We refer the reader to
\cite{Bonnans,Donchev09,JPang,Solo14,BorisKhanhPhat,Klatte,BorisEbrahim,qs,Ul} and the vast bibliographies therein for a variety of results in this direction, where mostly {\em a local superlinear} convergence rate was achieved, while in some publications certain globalization procedures were also suggested and investigated.\vspace*{0.03in}

The first goal of this paper is to develop a {\em globally convergent damped Newton method} of type \eqref{linealgorithm}, \eqref{Newtonequa} to solve problems \eqref{opproblem} with cost functions $\ph$ of class ${\cal C}^{1,1}$. Our approach is based on replacing the classical Hessian matrix $\nabla^2\varphi$ in equation \eqref{Newtonequa} by the inclusion
\begin{equation}\label{geneNewton}
-\nabla\varphi(x^k)\in\partial^2\varphi(x^k)(d^k),\quad k=0,1,\ldots,
\end{equation}
where $\partial^2\varphi$ stands for {\em second-order subdifferential/generalized Hessian} of $\varphi$ in the sense of Mordukhovich \cite{m92}. This construction has been largely used in variational analysis and its applications with deriving comprehensive calculus rules and complete computations of $\partial^2\ph$ for broad classes of composite functions that often appeared in important problems of optimization, optimal control, stability, applied sciences, etc.; see, e.g., \cite{chhm,dsy,dr,dl,dmn,hmn,Mordukhovich06,Mor18,MorduNghia,BorisOutrata,mr,os,Poli,roc,yy} with further references therein. The second-order subdifferentials have been recently employed in \cite{BorisEbrahim} and \cite{BorisKhanhPhat} for the design and justifications of generalized algorithms of the {\em pure Newton type} to find {\em stable} local minimizers of \eqref{opproblem} as well as solutions of gradient equations and subgradient inclusions associated with ${\cal C}^{1,1}$ and prox-regular functions, respectively.

In this paper we obtain efficient conditions ensuring that the iterative sequence generated by the damped Newton-type algorithm in \eqref{linealgorithm}, \eqref{Newtonequa} is {\em well-defined} (i.e., the algorithm {\em solvability}), and that the iterative sequence {\em global converges} to a {\em tilt-stable} local minimizer of \eqref{opproblem} in the sense of Poliquin and Rockafellar \cite{Poli}. It is shown that the rate of convergence of our algorithm is at least {\em linear}, while the {\em superlinear} convergence of the algorithm is achieved under the additional {\em semismooth$^*$} assumption on $\nabla\varphi$ in the sense of Gfrerer and Outrata \cite{Helmut}.\vspace*{0.03in}

The next major goal of the paper is to design, for the first time in the literature, a globally convergent damped Newton algorithm of solving nonsmooth problems of {\em convex composite optimization} given in the form:
\begin{equation}\label{comp}
\min\;\varphi(x):=f(x)+g(x)\;\text{ subject to }\;x\in\R^n,
\end{equation}
where $f$ is a convex quadratic function defined by $f(x):=\frac{1}{2}\langle Ax,x\rangle+\langle b,x\rangle+\alpha$ with  $b\in\R^n$, $\alpha\in\R$, and $A\in\R^{n\times n}$ being a positive-semidefinite matrix, and where $g\colon\R^n\to\oR:=(-\infty,\infty]$ is a lower semicontinuous (l.s.c.) extended-real-valued convex function. Problems in this format frequently arise in many applied areas such as machine learning, compressed sensing, and image processing. Since $g$ is generally extended-real-valued, the unconstrained format \eqref{comp} encompasses problems of {\em constrained optimization}. If, in particular, $g$ is the indicator function of a closed and convex set, then \eqref{comp} becomes a constrained quadratic optimization problems studied, e.g., in the book \cite{nw} with numerous applications.  Problems of this type are important in their own sake, while they also appear as {\em subproblems} in various numerical algorithms including sequential quadratic programming (SQP) methods, augmented Lagrangian methods, proximal Newton methods, etc. One of the most well-known algorithms to solve \eqref{comp} is the forward-backward splitting (FBS) or proximal splitting method \cite{cp,lm}. Since this method is of first order, its rate of convergence is at most linear. Another approach to solve \eqref{comp} is to use second-order methods such as proximal Newton methods, proximal quasi-Newton methods, etc.; see, e.g., \cite{bf,lss,myzz}. Although the latter approach has several benefits over first-order methods (as rapid convergence and high accuracy), a severe limitation of these methods is the cost of solving subproblems.\vspace*{0.03in}

In this paper we offer a different approach to solve problems \eqref{comp} globally by developing a generalized damped Newton algorithm based on the second-order subdifferential scheme \eqref{geneNewton}, advanced machinery of second-order variational analysis with the usage of the {\em proximal mapping} for $g$. As revealed below, the latter mapping can be constructively computed for many particular classes of problems arising in machine learning, statistics, etc. Proceeding in this way, we justify the well-posedness and global linear convergence of the proposed algorithm for \eqref{comp} with presenting efficient conditions for its superlinear convergence.\vspace*{0.03in}

The last topic of this paper concerns applications of the our {\em generalized damped Newton method} (GDNM) to solving an important class of {\em Lasso problems}, which appear in many areas of applied sciences and are discussed in detail in what follows. Problems of this class can be written in form \eqref{comp} with a quadratic loss function $f$ and a nonsmooth regularizer function $g$ given in special norm-type forms. For such problems, all the parameters of GDNM and its justification (first- and second-order subdifferentials, proximal mappings, conditions for convergence and convergence rates) can be computed and expressed entirely in terms of the problem data, which thus leads us the constructive globally superlinearly convergent realization of GDNM. Finally, we conduct MATLAB numerical experiments of solving the basic version of the Lasso problem described by Tibshirani \cite{Tibshirani} and then compare the obtained numerical results with those achieved by using well-recognized first-order and second-order methods. They include: Alternating Direction Methods of Multipliers (ADMM) \cite{gabay,glomar}, Nesterov's Accelerated Proximal Gradient with Backtracking (APG) \cite{Nesterov,nest}, Fast Iterative Shrinkage-Thresholding Algorithm with constant step size (FISTA) \cite{BeckTebou}, and a highly efficient Semismooth Newton Augmented Lagrangian Method (SSNAL) from \cite{lsk}.\vspace*{0.05in}

The rest of the paper is organized as follows. Section~\ref{sec:pre} presents and discusses some basic notions of variational analysis and generalized differentiation, which are broadly used in the formulations and proofs of the main results. Section~\ref{sec:dampedC11} is devoted to the development and justification of the globally convergent GDNM to solve unconstrained optimization problems \eqref{opproblem} with ${\cal C}^{1,1}$ cost functions. In Section~\ref{sec:rate} we present results on the linear and superlinear convergence of GDNM for problems of ${\cal C}^{1,1}$ optimization. Section~\ref{sec:dampnon} addresses developing GDNM for nonsmooth problems of convex composite optimization with cost functions given as sums of convex quadratic and convex extended-real-valued ones.  {In Section~\ref{Lassosec} we specify the obtained results for the basic class of Lasso problems under consideration and present the results of numerical experiments and their comparison with other first-order and second-order methods for solving Lasso problems}. The concluding Section~\ref{sec:conclusion} summarizes the major contributions of the paper and discusses topics of future research.\vspace*{-0.05in}

\section{Preliminaries from Variational Analysis}\label{sec:pre}

In this section we review the needed background from variational analysis and generalized differentiation by following the books \cite{Mordukhovich06,Mor18,Rockafellar98}, where the reader can find more details and references. Our notation is standard in variational analysis and optimization and can be found in the aforementioned books.

Given a set $\Omega\subset\R^s$ with $\oz\in\O$, the (Fr\'echet)
{\em regular normal cone} to $\Omega$ at $\bar{z}\in\Omega$ is \begin{equation*}
\widehat{N}_\Omega(\bar{z}):=\Big\{v\in\R^s\;\Big|\;\limsup_{z\overset{\Omega}{\rightarrow}\bar{z}}\frac{\langle
v,z-\bar{z}\rangle}{\|z-\bar{z}\|}\le 0\Big\}, \end{equation*} where the symbol
$z\overset{\Omega}{\rightarrow}\bar{z}$ stands for $z\to\bar{z}$
with $z\in\Omega$. The (Mordukhovich) {\em limiting normal cone} to
$\Omega$ at $\bar{z}\in\Omega$ is defined by \begin{equation}\label{lnc}
N_\Omega(\bar{z}):=\big\{v\in\R^s\;\big|\;\exists\,z_k\st{\O}{\to}\bar{z},\;v_k\to
v\;\text{ as }\;k\to\infty\;\text{ with
}\;v_k\in\widehat{N}_\Omega(z_k)\big\}. \end{equation} Given further
a set-valued mapping $F\colon\R^n\tto\R^m$ with the graph
\begin{equation*} \gph F:=\big\{(x,y)\in\R^n\times\R^m\;\big|\;y\in
F(x)\big\}, \end{equation*} the (basic/limiting) {\em coderivative}
of $F$ at $(\ox,\oy)\in\gph F$ is defined via the limiting normal
cone \eqref{lnc} to the graph of $F$ at the reference point
$(\ox,\oy)$ as \begin{equation}\label{lim-cod}
D^*F(\ox,\oy)(v):=\big\{u\in\R^n\;\big|\;(u,-v)\in N_{{\rm
gph}\,F}(\ox,\oy)\big\},\quad v\in\R^m, \end{equation} where $\oy$
is omitted in the coderivative notation if $F(\bar{x})=\{\bar{y}\}$.
Note that if $F\colon\R^n\to\R^m$ is a (single-valued) ${\cal
	C}^1$-smooth mapping around $\ox$, then we have \begin{equation*}
D^*F(\bar{x})(v)=\big\{\nabla F(\bar{x})^*v\big\}\;\mbox{ for all
}\;v\in\R^m \end{equation*} in terms of the transpose matrix
(adjoint operator) $\nabla F(\bar{x})^*$ of the Jacobian $\nabla
F(\bar{x})$. Recall further that a set-valued mapping
$F:\R^n\rightrightarrows \R^m$ is {\it metrically regular} around
$(\bar{x},\bar{y}) \in \text{\rm gph}F$ with modulus $\mu>0$ if
there exist neighborhoods $U$ of $\bar{x}$ and $V$ of $\bar{y}$ such
that \begin{equation*} {\rm dist}\big(x;F^{-1}(y)\big)\le\mu\,{\rm
dist}\big(y;F(x)\big)\;\text{ for all }\;(x,y)\in U\times V,
\end{equation*} where $F^{-1}(y):=\{x\in\R^n\;|\;y\in F(x)\}$ is the
inverse mapping of $F$. If in addition $F^{-1}$ has a single-valued
localization around $(\bar{y},\bar{x})$, i.e., there exist some
neighborhoods $U$ of $\ox$ and $V$ of $\oy$ together with a
single-valued mapping $\vartheta: V\to U$  such that $\text{\rm
	gph}F^{-1}\cap (V\times U) = \text{\rm gph}\vartheta$, then $F$ is
{\it strongly metrically regular} around $(\bar{x},\bar{y})$ with
modulus $\mu>0$. A set-valued mapping $T: \R^n \rightrightarrows
\R^n$ is \textit{locally strongly monotone} with modulus $\tau  > 0$
around $(\ox, \oy) \in  \text{\rm gph}T$  if there exist
neighborhoods $U$ of $\ox$ and $V$ of $\oy$ such that $$ \langle
x-u, v- w\rangle \geq \tau \|x-u\|^2 \quad \text{for all }\; (x,v),
(u,w) \in \text{\rm gph}T\cap (U\times V). $$ If in addition
$\text{\rm gph}T \cap (U\times V) = \text{\rm gph}S \cap (U\times
V)$ for any monotone operator $S: \R^n\rightrightarrows \R^n$
satisfying the inclusion $\text{\rm gph}T \cap (U\times V)
\subset \text{\rm gph}S$, $T$ is called \textit{locally strongly maximally monotone} with modulus $\tau  > 0$ around $(\ox, \oy)$.\\[1ex]
Let $\ph\colon\R^n\to\oR$ be an extended-real-valued function with the domain and epigraph
\begin{equation*}
\dom\ph:=\big\{x\in\R^n\;\big|\;\ph(x)<\infty\big\}\;\mbox{ and }\;\epi\ph:=\big\{(x,\al)\in\R^{n+1}\;\big|\;\al\ge\ph(x)\big\}.
\end{equation*}
The (basic/limiting) {\em subdifferential} of $\ph$ at $\ox\in\dom\ph$ is defined geometrically
\begin{equation}\label{lim-sub}
\partial\varphi(\ox):=\big\{v\in\R^n\;\big|\;(v,-1)\in N_{{\rm\small epi}\,\varphi}\big(\bar{x},\varphi(\bar{x})\big)\big\}
\end{equation}
via the limiting normal cone \eqref{lnc}, while admitting various analytic representations. This subdifferential is an extension of the classical gradient for smooth functions and of the classical subdifferential of convex ones. If $F\colon\R^n\to\R^m$ is locally Lipschitzian around $\ox$, then we have the useful relationships between the coderivative \eqref{lim-cod} of $F$ and the subdifferential \eqref{lim-sub} of the scalarized function $\la v,F\ra(x):=\la v,F(x)\ra$ formulated as
\begin{equation}\label{scal}
D^*F(\bar{x})(v)=\partial\langle v,F\rangle(\bar{x})\;\mbox{ for all }\;v\in\R^m.
\end{equation}
Following \cite{m92}, we now define the {\em second-order subdifferential} $\partial^2\ph(\ox,\ov)\colon\R^n\tto\R^n$ of $\ph\colon\R^n\to\oR$ at $\ox\in\dom\ph$ for $\ov\in\partial\ph(\ox)$ as the coderivative \eqref{lim-cod} of the subgradient mapping \eqref{lim-sub}, i.e., by
\begin{equation}\label{2nd}
\partial^2\ph(\ox,\ov)(u):=\big(D^*\partial\ph\big)(\ox,\oy)(u)\;\mbox{ for all }\;u\in\R^n.
\end{equation}
If $\ph$ is ${\cal C}^2$-smooth around $\ox$, then we have the representation of the second-order subdifferential via the classical (symmetric) Hessian matrix
\begin{equation}\label{C2_Case}
\partial^2\ph(\ox)(u)=\big\{\nabla^2\ph(\ox)u\big\}\;\mbox{ for all }\;u\in\R^n,
\end{equation}
which allows us to also label \eqref{2nd} as the {\em generalized Hessian}.
In the case of ${\cal C}^{1,1}$ functions $\ph$, the second-order subdifferential \eqref{2nd} is computed by the scalarization formula \eqref{scal} via the coderivative of the gradient mapping $\nabla\ph$. In Section~\ref{intro}, the reader can find the references to many publications where the second-order subdifferential is computed entirely via the given data for major classes of systems appeared in variational analysis and optimization. It is important to mention that our basic constructions \eqref{lnc}--\eqref{lim-sub} and \eqref{2nd}, enjoy comprehensive {\em calculus rules} in general settings, despite being intrinsically nonconvex. This is due to {\em variational/extremal principles} of variational analysis; see the books \cite{Mordukhovich06,Mor18,Rockafellar98} for the first-order constructions and \cite{Mordukhovich06,Mor18} for the second-order subdifferential \eqref{2nd}.\vspace*{0.03in}

In what follows we are going to broadly employ the fundamental notion of {\em tilt stability} of local minimizers for extended-real-valued functions, which was introduced by Poliquin and Rockafellar \cite{Poli} and characterized therein in terms of the second-order subdifferential \eqref{2nd}.

\begin{Definition}[\bf tilt-stable local minimizers]\label{def:tilt} Given $\varphi\colon\R^n\to\oR$, a point $\ox\in\dom\varphi$ is a {\sc tilt-stable local minimizer} of $\varphi$ if there exists a number $\gamma>0$ such that the mapping
\begin{equation*}
M_\gamma\colon v\mapsto{\rm argmin}\big\{\varphi(x)-\langle v,x\rangle\;\big|\;x\in\B_\gamma(\ox)\big\}
\end{equation*}
is single-valued and Lipschitz continuous on a neighborhood of $0\in\R^n$ with $M_\gamma(0)=\{\ox\}$. By a {\sc modulus} of tilt stability of $\ph$ at $\ox$ we understand a Lipschitz constant of $M_\gamma$ around the origin.
\end{Definition}

Besides the seminal paper \cite{Poli}, the notion of tilt stability has been largely investigated, characterized, and widely applied in many publications to various classes of unconstrained and constrained optimization problems; see, e.g., \cite{ChieuNghia,dl,dmn,gm,Mor18,MorduNghia,mr} and the references therein.\vspace*{-0.05in}

\section{Globally Convergent GDNM in $\mathcal{C}^{1,1}$ Optimization}\label{sec:dampedC11}

In this section we concentrate on the unconstrained optimization problem \eqref{opproblem}, where the cost function $\varphi\colon\R^n\to\R$ is of class $\mathcal{C}^{1,1}$ around the reference points. The corresponding gradient equation associated with \eqref{opproblem}, which gives us, in particular, a necessary condition for local minimizers, is written in the form
\begin{equation}\label{gradientsys}
\nabla\varphi(x)=0.
\end{equation}

The following generalization of the pure Newton algorithm to solve \eqref{opproblem} {\em locally} was first suggested and investigated in \cite{BorisEbrahim} under the major assumption that a given point $\ox$ is a tilt-stable local minimizer of \eqref{opproblem}. Then it was extended in \cite{BorisKhanhPhat} to solve directly the gradient equation \eqref{gradientsys} under certain assumptions on a given solution $\ox$ to \eqref{gradientsys} ensuring the well-posedness and local superlinear convergence of the algorithm.

\begin{Algorithm}[\bf generalized pure Newton-type algorithm for ${\cal C}^{1,1}$ functions]\label{NM}\text{ }{\rm\\[1ex]
		{\bf Step~0:} Choose a starting point $x^0\in\R^n$ and set $k=0$.\\[1ex]
		{\bf Step~1:} If $\nabla\varphi(x^k)=0$, stop the algorithm. Otherwise move to Step~2.\\[1ex]
		{\bf Step~2:} Choose $d^k\in\R^n$ satisfying
		\begin{equation*}
		-\nabla\varphi(x^k)\in\partial^2\varphi(x^k)(d^k).
		\end{equation*}
		{\bf Step~3:} Set $x^{k+1}$ given by
		\begin{equation*}
		x^{k+1}:=x^k+d^k\;\mbox{ for all }\;k=0,1,\ldots.
		\end{equation*}
		{\bf Step~4:} Increase $k$ by $1$ and go to Step~1.}
\end{Algorithm}

One of the serious disadvantages of the pure Newton method and its generalizations is that the corresponding sequence of iterates may not converges if the stating point is not sufficiently close to the solution.  This motivates us to design and justify the {\em globally} convergent {\em damped Newton} counterpart of Algorithm~\ref{NM} with {\em backtracking line search} to solve the gradient equation \eqref{gradientsys} that is formalized as follows.

\begin{Algorithm}[\bf generalized damped Newton algorithm for $\mathcal{C}^{1,1}$ functions]\label{LS}\rm Let $\sigma\in\left(0,\frac{1}{2}\right)$ and $\beta\in\left(0,1\right)$ be given real numbers. Then do the following:\\[1ex]
	{\bf Step~0:} Choose an arbitrary staring point $x^0\in\R^n$ and set $k=0$.\\[1ex]
	{\bf Step~1:} If $\nabla\varphi(x^k)=0$, stop the algorithm. Otherwise move to Step~2.\\[1ex]
	{\bf Step~2:} Choose $d^k\in\R^n$ such that
	\begin{equation}\label{alC11}
	-\nabla\varphi(x^k)\in\partial^2\varphi(x^k)(d^k). \end{equation}
	{\bf Step~3:} Set $\tau_k=1$. Until \textit{Armijo's inequality} $$
	\varphi(x^k+\tau_kd^k)\leq
	\varphi(x^k)+\sigma\tau_k\langle\nabla\varphi(x^k),d^k\rangle. $$
	is satisfied, set $\tau_k:=\beta\tau_k$.\\[1ex]
	{\bf Step~4:} Set $x^k$ given by
	$$
	x^{k+1}:=x^k+\tau_k d^k\;\mbox{ for all }\;k=0,1,\ldots.
	$$
	{\bf Step~5:} Increase $k$ by $1$ and go to Step~1.
	
\end{Algorithm}

Due to \eqref{C2_Case}, Algorithm~\ref{LS} reduces to the standard damped Newton method (as, e.g., in \cite{BeckAl,Boyd}) if $\varphi$ is $\mathcal{C}^2$-smooth. Note also that by \eqref{lim-cod} the direction $d^k$ in \eqref{alC11} can be explicitly found from
\begin{equation*}
\big(-\nabla\varphi(x^k),-d^k\big)\in N\big((x^k,\nabla\varphi(x^k));\gph\nabla\varphi\big).
\end{equation*}
To proceed with the study of Algorithm~\ref{LS}, first we clarify the existence of {\em descent} Newton directions. It is done in the next proposition under the {\em positive-definiteness} of the second-order subdifferential mapping $\partial^2\ph(x)$.

\begin{Proposition}[\bf existence of descent Newton directions]\label{descent} Let $\varphi\colon\R^n\to\R$ be of class $\mathcal{C}^{1,1}$ around $x\in\R^n$. Suppose that $\nabla\varphi(x)\ne 0$ and that $\partial^2\varphi(x)$ is positive-definite, i.e.,
	\begin{equation}\label{pos-def}
	\langle z,u\rangle>0\;\mbox{ for all }\;z\in\partial^2\varphi(x)(u)\;\mbox{ and }\;u\ne 0.
	\end{equation}
	Then there exists a nonzero direction $d\in\R^n$ such that
	\begin{equation}\label{descentforvaphi}
	-\nabla\varphi(x)\in\partial^2\varphi(x)(d).
	\end{equation}
	Moreover, every such direction satisfies the inequality $\langle
	\nabla\varphi(x),d\rangle <0$. Consequently, for each
	$\sigma\in(0,1)$ and $d\in\R^n$ satisfying \eqref{descentforvaphi}
	we have $\delta>0$ such that
	\begin{equation}\label{Armijo}
	\varphi(x+\tau d)\le\varphi(x)+\sigma\tau\langle\nabla\varphi(x),d\rangle\;\mbox{ whenever }\;\tau\in(0,\delta).
	\end{equation}
\end{Proposition}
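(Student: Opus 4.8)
The plan is to establish the three assertions in the order stated: the existence of a nonzero $d$ solving \eqref{descentforvaphi}, the descent property $\langle\nabla\varphi(x),d\rangle<0$ for every such $d$, and finally the Armijo estimate \eqref{Armijo}. The decisive ingredient is the first one; the other two follow quickly from positive-definiteness and a first-order expansion.

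I would begin with the elementary observation that $\partial^2\varphi(x)(0)=\{0\}$. Indeed, since $\varphi$ is of class $\mathcal{C}^{1,1}$, the gradient $\nabla\varphi$ is locally Lipschitzian, so the scalarization formula \eqref{scal} gives $\partial^2\varphi(x)(0)=D^*\nabla\varphi(x)(0)=\partial\langle 0,\nabla\varphi\rangle(x)=\{0\}$. Consequently every $d$ satisfying \eqref{descentforvaphi} is necessarily nonzero, for otherwise $-\nabla\varphi(x)\in\partial^2\varphi(x)(0)=\{0\}$ would contradict $\nabla\varphi(x)\ne 0$. This remark also settles the descent claim at once: given any $d$ with $-\nabla\varphi(x)\in\partial^2\varphi(x)(d)$ we have $d\ne 0$, and applying the positive-definiteness \eqref{pos-def} with $z=-\nabla\varphi(x)$ and $u=d$ yields $\langle-\nabla\varphi(x),d\rangle>0$, that is, $\langle\nabla\varphi(x),d\rangle<0$.

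The core of the argument is the existence of $d$, which amounts to showing that $-\nabla\varphi(x)$ lies in the range of the set-valued mapping $\partial^2\varphi(x)=D^*\nabla\varphi(x)$. My approach is to route this through metric regularity. The positive-definiteness \eqref{pos-def} gives in particular the implication $0\in\partial^2\varphi(x)(u)\Rightarrow u=0$, which is precisely the Mordukhovich coderivative criterion for metric regularity of $\nabla\varphi$ around $(x,\nabla\varphi(x))$. Since $\nabla\varphi$ is single-valued and continuous, metric regularity upgrades to strong metric regularity, so the inverse $(\nabla\varphi)^{-1}$ admits a single-valued Lipschitzian localization $\vartheta$ around $(\nabla\varphi(x),x)$. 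Because $\vartheta$ is Lipschitzian, its coderivative $D^*\vartheta(\nabla\varphi(x))(\cdot)$ is nonempty-valued. I then invoke the inverse-coderivative rule $u\in D^*F(\ox,\oy)(v)\Leftrightarrow -v\in D^*F^{-1}(\oy,\ox)(-u)$, which comes from the coordinate swap $N_{\gph F}\leftrightarrow N_{\gph F^{-1}}$: picking any $w\in D^*\vartheta(\nabla\varphi(x))(\nabla\varphi(x))$ and setting $d:=-w$ produces $-\nabla\varphi(x)\in D^*\nabla\varphi(x)(d)=\partial^2\varphi(x)(d)$, which is exactly \eqref{descentforvaphi}. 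For the final assertion, writing $c:=\langle\nabla\varphi(x),d\rangle<0$ and using Fréchet differentiability of $\varphi$ at $x$, I expand $\varphi(x+\tau d)-\varphi(x)-\sigma\tau c=\tau(1-\sigma)c+o(\tau)$ as $\tau\downarrow 0$; since $\sigma\in(0,1)$ and $c<0$ force $(1-\sigma)c<0$, the linear term dominates the remainder and the whole expression is negative for all sufficiently small $\tau>0$, which yields the desired $\delta>0$.

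I anticipate that the only genuinely delicate step is the existence part, and within it two points require care: the passage from metric to strong metric regularity of $\nabla\varphi$ (which relies essentially on single-valuedness and continuity of the gradient), and the correct bookkeeping of signs and base points in the inverse-coderivative rule. An alternative to the first of these would be to cite directly the known second-order characterization that positive-definiteness of $\partial^2\varphi(x)$ is equivalent to strong metric regularity of $\nabla\varphi$ around $(x,\nabla\varphi(x))$, reaching the same localization $\vartheta$. Once the inclusion \eqref{descentforvaphi} is secured, the descent inequality and the Armijo estimate are entirely routine.
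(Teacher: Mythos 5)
Your treatment of the nonvanishing of $d$ (via $\partial^2\varphi(x)(0)=\{0\}$ from scalarization), of the descent inequality (applying \eqref{pos-def} with $z=-\nabla\varphi(x)$, $u=d$), and of the Armijo estimate (a first-order expansion, which is what the paper gets from the cited lemmas of Izmailov--Solodov) all match the paper's argument in substance and are correct. The problem lies in your existence step. From positive-definiteness you extract only the kernel condition $0\in\partial^2\varphi(x)(u)\Rightarrow u=0$, which by the Mordukhovich criterion gives \emph{metric} regularity of $\nabla\varphi$; you then assert that ``since $\nabla\varphi$ is single-valued and continuous, metric regularity upgrades to strong metric regularity.'' This upgrade is not a valid general principle. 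Single-valuedness and continuity of a map $\R^n\to\R^n$ do not make a metrically regular map locally invertible with a single-valued Lipschitz inverse localization; even for locally Lipschitzian maps this implication is a deep and delicate matter (it is the subject of Fusek's work and is discussed as a nontrivial open issue in Dontchev--Rockafellar \cite{Donchev09}), and it certainly cannot be obtained from continuity alone. The root of the difficulty is that your kernel condition discards almost all of the information in \eqref{pos-def}: what single-valuedness of the inverse localization really requires is local injectivity of $\nabla\varphi$, and that comes from the \emph{full} positive-definiteness via strong local maximal monotonicity of $\nabla\varphi$ (\cite[Theorem~5.16]{Mor18}), which then yields strong metric regularity by \cite[Theorem~5.13]{Mor18}. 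This monotonicity route is exactly what the paper does, and it is what you relegate to a parenthetical ``alternative''; as written, your primary argument has a genuine gap at precisely the step you yourself flag as delicate.

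Once strong metric regularity is properly established, the remainder of your existence argument is sound: the localization $\vartheta$ of $(\nabla\varphi)^{-1}$ is single-valued and Lipschitz, its coderivative is nonempty-valued by scalarization, the sign bookkeeping in the inverse-coderivative rule $w\in D^*\vartheta(\bar y)(v)\Leftrightarrow -v\in D^*\nabla\varphi(x)(-w)$ is correct, and taking $v=\nabla\varphi(x)$, $d:=-w$ produces \eqref{descentforvaphi}. So the fix is simply to replace the metric-regularity detour by the monotonicity argument (or to cite the solvability result \cite[Theorem~4.2]{BorisKhanhPhat} directly, as the paper does).
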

\begin{proof} It follows from \cite[Theorem~5.16]{Mor18} that $\nabla\varphi$ is strongly locally maximal monotone around $(x,\nabla\varphi(x))$. Thus $\nabla\varphi$ is strongly metrically regular
	around $(x,\nabla\varphi(x))$ by \cite[Theorem~5.13]{Mor18}. Using  {\cite[Theorem~4.2]{BorisKhanhPhat}} yields the existence of $d\in\R^n$ with $-\nabla\varphi(x)\in\partial^2\varphi(x)(d)$. To verify that $d\ne 0$, suppose on the contrary that $d=0$. Since $\nabla\varphi$ is locally Lipschitz around $x$, it follows from \cite[Theorem~1.44]{Mordukhovich06} that
	$$
	\partial^2\varphi(x)(d)=\big(D^*\nabla\varphi\big)(x)(d)=\big(D^*\nabla\varphi\big)(x)(0)=\{0\}.
	$$
	Therefore, we have that $\nabla\varphi(x)=0$ due to the inclusion $-\nabla\varphi(x)\in\partial^2\varphi(x)(d)$, which contradicts the assumption that $\nabla\varphi(x)\ne 0$. Employing the imposed positive-definiteness of the mapping $\partial^2\varphi(x)$ tells us that $\langle\nabla\varphi(x),d\rangle<0$. Using finally \cite[Lemmas~2.18 and 2.19]{Solo14}, we arrive at
	\eqref{Armijo} and thus complete the proof of the proposition.
\end{proof}

Now we formulate and discuss our major assumption to establish the desired global behavior of Algorithm~\ref{LS} for $\mathcal{C}^{1,1}$ functions $\ph$. Fix an arbitrary point $x^0\in\R^n$ and consider the level set
\begin{equation}\label{level}
\Omega:=\big\{x\in\R^n\;\big|\;\varphi(x)\le\varphi(x^0)\big\}.
\end{equation}

\begin{Assumption}\label{PDofHessian} \rm The mapping $\partial^2\varphi(x)\colon\R^n\tto\R^n$ is positive-definite \eqref{pos-def} for all $x\in\Omega$.
\end{Assumption}

Observe that Assumption~\ref{PDofHessian} {\em cannot be removed} or
even {\em replaced} by the {\em positive-semidefiniteness} of
$\partial^2\ph(x)$ to ensure the existence of descent Newton
direction for Algorithm~\ref{LS} as in Proposition~\ref{descent}.
Indeed, consider the simplest linear function $\varphi(x):=x$ on
$\R$. Then we obviously have that $\nabla^2\varphi(x)\ge 0$ for all
$x\in\R$, while there is no Newtonian direction $d\in\R$ satisfying
the backtracking line search condition
\eqref{Armijo}.\vspace*{0.05in}

The next theorem shows that Assumption~\ref{PDofHessian} not only ensures the {\em well-posedness} of Algorithm~\ref{LS}, but also allows us to conclude that all the limiting points of the iterative sequence $\{x^k\}$ are {\em tilt-stable minimizers} of the cost function $\ph$.

\begin{Theorem}[\bf well-posedness and limiting points of the generalized damped Newton algorithm]\label{globalconver} Let $\varphi\colon\R^n\to\R$ be of class $\mathcal{C}^{1,1}$, and let $x^0\in\R^n$ be an arbitrary point such that Assumption~{\rm\ref{PDofHessian}} is satisfied. Then we have the following assertions:
	\begin{itemize}
		\item[\bf(i)] Any sequence $\{x^k\}$ generated by Algorithm~{\rm\ref{LS}} is well-defined with $x^k\in\Omega$ for all $k\in\N$.
		\item[\bf(ii)] All the limiting points of $\{x^k\}$ are tilt-stable local minimizers of $\varphi$.
	\end{itemize}
\end{Theorem}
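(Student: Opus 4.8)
For this final statement I would establish the two assertions separately, with Proposition~\ref{descent} driving the well-posedness in (i) and a line-search convergence analysis, adapted to the generalized Newton inclusion \eqref{alC11}, driving (ii). For assertion (i) the plan is to argue by induction on $k$ that $x^k\in\Omega$ and that the iteration is executable. The base case $x^0\in\Omega$ is immediate from \eqref{level}. Assuming $x^k\in\Omega$, if $\nabla\varphi(x^k)=0$ the algorithm stops; otherwise Assumption~\ref{PDofHessian} guarantees that $\partial^2\varphi(x^k)$ is positive-definite, so Proposition~\ref{descent} furnishes a nonzero $d^k$ satisfying \eqref{alC11} with $\langle\nabla\varphi(x^k),d^k\rangle<0$, and moreover the Armijo-type inequality \eqref{Armijo} holds for all sufficiently small $\tau$. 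Since Step~3 replaces $\tau_k$ by $\beta\tau_k$ and $\beta^j\to 0$, the backtracking terminates after finitely many reductions with some $\tau_k>0$, and Armijo's inequality then yields $\varphi(x^{k+1})\le\varphi(x^k)+\sigma\tau_k\langle\nabla\varphi(x^k),d^k\rangle<\varphi(x^k)\le\varphi(x^0)$, so $x^{k+1}\in\Omega$ and the induction closes.

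For assertion (ii), let $\bar{x}$ be a limiting point along a subsequence $x^{k_j}\to\bar{x}$. Since $\varphi$ is continuous, $\Omega$ is closed, so $\bar{x}\in\Omega$ and $\partial^2\varphi(\bar{x})$ is positive-definite. By (i) the sequence $\{\varphi(x^k)\}$ is nonincreasing and has the convergent subsequence $\varphi(x^{k_j})\to\varphi(\bar{x})$, hence it converges to $\varphi(\bar{x})$ and $\varphi(x^k)-\varphi(x^{k+1})\to 0$; combined with Armijo's inequality this gives $\tau_k\langle\nabla\varphi(x^k),d^k\rangle\to 0$. The crux is to upgrade this to $\nabla\varphi(\bar{x})=0$, and for that I would record two estimates valid near $\bar{x}$. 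First, since $\nabla\varphi$ is Lipschitz with modulus $\ell$ and the coderivative of a Lipschitzian mapping is bounded by its modulus, the inclusion \eqref{alC11} yields $\|\nabla\varphi(x^k)\|\le\ell\,\|d^k\|$. Second, the pointwise positive-definiteness of $\partial^2\varphi(\bar{x})$ upgrades, via outer semicontinuity of the second-order subdifferential and a compactness argument on the unit sphere (equivalently, via the strong maximal monotonicity already invoked in Proposition~\ref{descent}), to a uniform modulus $\kappa>0$ with $\langle z,u\rangle\ge\kappa\|u\|^2$ for all $z\in\partial^2\varphi(x)(u)$ and $x$ near $\bar{x}$; this gives both $\langle\nabla\varphi(x^k),d^k\rangle\le-\kappa\|d^k\|^2$ and $\|d^k\|\le\kappa^{-1}\|\nabla\varphi(x^k)\|$, so $\|d^{k_j}\|$ and $\|\nabla\varphi(x^{k_j})\|$ are comparable.

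Suppose, for contradiction, that $\nabla\varphi(\bar{x})\ne 0$; then $\|d^{k_j}\|$ is bounded away from $0$. I would split on the step sizes. If $\{\tau_{k_j}\}$ is bounded away from $0$, then $\tau_{k_j}\kappa\|d^{k_j}\|^2\le-\tau_{k_j}\langle\nabla\varphi(x^{k_j}),d^{k_j}\rangle\to 0$ forces $\|d^{k_j}\|\to 0$, a contradiction. If $\tau_{k_j}\to 0$, then at the preceding trial step $\tau_{k_j}/\beta$ the Armijo test failed; feeding the descent-lemma estimate $|\varphi(x+td)-\varphi(x)-t\langle\nabla\varphi(x),d\rangle|\le\frac{\ell}{2}t^2\|d\|^2$ into that failed inequality and rearranging yields $(1-\sigma)\kappa\le\frac{\ell}{2\beta}\tau_{k_j}\to 0$, again a contradiction. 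Hence $\nabla\varphi(\bar{x})=0$, so $\bar{x}$ is a stationary point with $0\in\partial\varphi(\bar{x})$ at which $\partial^2\varphi(\bar{x})$ is positive-definite.

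To conclude, I would invoke that $\mathcal{C}^{1,1}$ functions are prox-regular and subdifferentially continuous, so the Poliquin--Rockafellar characterization of tilt stability \cite{Poli} applies: positive-definiteness of the second-order subdifferential at the stationary point $\bar{x}$ identifies $\bar{x}$ as a tilt-stable local minimizer of $\varphi$. I expect the main obstacle to be precisely the implication $\tau_k\langle\nabla\varphi(x^k),d^k\rangle\to 0\ \Rightarrow\ \nabla\varphi(\bar{x})=0$, in particular securing the \emph{uniform} positive-definiteness modulus $\kappa$ near $\bar{x}$ and taming the degenerate regime $\tau_{k_j}\to 0$ through the failed backtracking test; once stationarity is in hand, the tilt-stability conclusion is a direct appeal to the known characterization.
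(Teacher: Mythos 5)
Your proposal is correct and follows essentially the same route as the paper: induction via Proposition~\ref{descent} for (i), and for (ii) the local uniform positive-definiteness modulus $\kappa$ near $\bar{x}$, the coderivative bound $\|\nabla\varphi(x^k)\|\le\ell\|d^k\|$ for Lipschitzian gradients, the descent lemma fed into the failed backtracking test to rule out $\tau_{k_j}\to 0$, and the Poliquin--Rockafellar characterization to pass from $\nabla\varphi(\bar{x})=0$ plus positive-definiteness to tilt stability. The only cosmetic differences are that you frame the stationarity step as a contradiction with a dichotomy on the step sizes (which requires passing to a further subsequence so that either $\tau_{k_j}\ge\gamma>0$ or $\tau_{k_j}\to 0$ -- exactly the extraction the paper performs when proving its Claim~1 directly), whereas the paper first establishes the lower bound on $\{\tau_{k_j}\}$ and then derives $\|\nabla\varphi(x^{k_j})\|\to 0$ from the Armijo decrease.
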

\begin{proof} First we check that a sequence $\{x^k\}$ generated by Algorithm~\ref{LS} with any starting point $x^0$ is well-defined. Indeed, there is nothing to prove if $\nabla\varphi(x^0)=0$. Otherwise, it follows from Proposition~\ref{descent} due to the positive-definiteness of $\partial^2\varphi(x^0)$ that there exist $d^0$ and $\tau_0$ satisfying $-\nabla\varphi(x^0)\in\partial^2\varphi(x^0)(d^0)$ and the inequalities
	$$
	\varphi(x^1)\le\varphi(x^0)+\sigma\tau_0\langle\nabla\varphi(x^0),d^0\rangle<\varphi(x^0),
	$$
	which clearly ensure that $x^1\in\Omega$. Then we get by induction that either $x^k\in\Omega$, or $\nabla\varphi(x^k)=0$ whenever $k\in\N$. Thus assertion (i) is verified.
	
	Next we prove assertion (ii). To proceed, suppose that $\{x^k\}$ has a limiting point $\bar{x}\in\R^n$, i.e., there exists a subsequence $\{x^{k_j}\}_{j\in\N}$ of $\{x^k\}$ such that $x^{k_j}\to \bar{x}$ as $j\to\infty$. Since $\Omega$ is closed and since $x^{k_j}\in\Omega$ for all $j\in\N$, we get $\bar{x}\in\Omega$. By Assumption~\ref{PDofHessian}, the mapping $\partial^2\varphi(\bar{x})$ is positive-definite. Then \cite[Proposition~4.6]{ChieuLee17} gives us positive numbers $\kappa$ and $\delta$ such that
	\begin{equation}\label{uniformPD}
	\langle z,w\rangle\ge\kappa\|w\|^2\quad\text{for all }\;z\in\partial^2\varphi(x)(w),\;x\in\mathbb{B}_\delta(\bar{x}),\;\mbox{ and }\;w\in\R^n.
	\end{equation}
	Since $\ph$ is of class ${\cal C}^{1,1}$ around $\bar{x}$, we get without loss of generality that $\nabla\varphi$ is Lipschitz continuous on $\mathbb{B}_\delta(\bar{x})$ with some constant $\ell>0$. The rest of the proof is split into the following two claims.\\[1ex]
	{\bf Claim~1:} {\em The sequence $\{\tau_{k_j}\}_{j\in\N}$ in Algorithm~{\rm\ref{LS}} is bounded from below by a positive number $\gamma>0$}. Indeed, suppose on the contrary that the statement does not hold. Combining this with $\tau_k\ge 0$ gives us a subsequence of $\{\tau_{k_j}\}$ that converges to $0$. Suppose without loss of generality that $\tau_{k_j}\to 0$ as $j\to\infty$. Since $-\nabla \varphi(x^{k_j})\in\partial^2\varphi(x^{k_j})(d^{k_j})$ for all $j\in\N$, we deduce from \eqref{uniformPD} and the Cauchy-Schwarz inequality that
	$$
	\|\nabla\varphi(x^{k_j})\|\ge\kappa\|d^{k_j}\|\;\mbox{ whenever }\;j\in\N,
	$$
	which verifies the boundedness of $\{d^{k_j}\}$. Thus $x^{k_j}+\beta^{-1}\tau_{k_j}d^{k_j}\to\bar{x}$ as $j\to\infty$, and hence
	$$
	x^{k_j}+\beta^{-1}\tau_{k_j}d^{k_j}\in\mathbb{B}_\delta(\bar{x})
	$$
	for all $j\in\N$ sufficiently large. Since $\ph$ is of class ${\cal C}^{1,1}$ around $\bar{x}$, we suppose without loss of generality that $\nabla\varphi$ is Lipschitz continuous on $\mathbb{B}_\delta(\bar{x})$. It follows then from \cite[Lemma~A.11]{Solo14} that
	\begin{equation}\label{Lipineq}
	\varphi(x^{k_j}+\beta^{-1}\tau_{k_j}d^{k_j})\le\varphi(x^{k_j})+\beta^{-1}\tau_{k_j}\langle\nabla\varphi(x^{k_j}),d^{k_j}\rangle +\frac{\ell\beta^{-2}\tau_{k_j}^2}{2}\|d^{k_j}\|^2
	\end{equation}
	whenever indices $j\in\N$ are sufficiently large. Due to the exit condition of the backtracking line search in Step~3 of Algorithm~\ref{LS}, we have the strict inequality
	\begin{equation}\label{exitcond}
	\varphi(x^{k_j}+\beta^{-1}\tau_{k_j}d^{k_j})>\varphi(x^{k_j})+\sigma\beta^{-1}\tau_{k_j}\langle\nabla \varphi(x^{k_j}),d^{k_j}\rangle
	\end{equation}
	for large $j\in\N$. Now combining \eqref{uniformPD}, \eqref{Lipineq}, and \eqref{exitcond} for such $j$ yields the estimates
	\begin{eqnarray*}
		\sigma\beta^{-1}\tau_{k_j}\langle\nabla\varphi(x^{k_j}),d^{k_j}\rangle&<&\beta^{-1}\tau_{k_j}\langle\nabla\varphi(x^{k_j}),d^{k_j}\rangle+\frac{\ell\beta^{-2}\tau_{k_j}^2}{2}\|d^{k_j}\|^2\\
		&\le&\beta^{-1}\tau_{k_j}\langle\nabla\varphi(x^{k_j}),d^{k_j}\rangle+\frac{\ell\beta^{-2}\tau_{k_j}^2}{2\kappa}\langle\nabla\varphi(x^{k_j}),-d^{k_j}\rangle,
	\end{eqnarray*}
	which imply in turn that $\sigma\beta>\beta-\frac{\ell}{2\kappa}\tau_{k_j}$ for all large $j\in\N$. Letting $j\to\infty$ gives us $\sigma\beta\ge\beta$, a contradiction due to $\sigma<1$ and $\beta>0$. This justifies the claimed boundedness of $\{\tau_{k_j}\}_{j\in\N}$.\\[1ex]
	{\bf Claim~2:} {\em Any limiting point $\bar{x}$ of $\{x^k\}$ is a tilt-stable local minimizer of $\varphi$.} Indeed, it follows from the continuity of the gradient $\nabla\varphi$ and the convergence $x^{k_j}\to\bar{x}$ that $\nabla\varphi(x^{k_j})\to\nabla\varphi(\bar{x})$ as $j\to\infty$. Since the sequence $\{\varphi(x^k)\}$ is nonincreasing, we get that $\varphi(\bar{x})$ is a lower bound for $\{\varphi(x^k)\}$. Thus the sequence $\{\varphi(x^k)\}$ must converge to $\varphi(\bar{x})$ as $k\to\infty$. It follows from \cite[Theorem~1.44]{Mordukhovich06} due to $-\nabla\varphi(x^{k_j})\in\partial^2\varphi(x^{k_j})(d^{k_j})$ and the Lipschitz continuity of $\nabla\varphi$ on $\mathbb{B}_\delta(\bar{x})$ with constant $\ell$ that
	\begin{equation}\label{bounded1Lip}
	\|\nabla\varphi(x^{k_j})\|\le\ell\|d^{k_j}\|\;\text{ for sufficiently large }\;j\in\N.
	\end{equation}
	Combining Claim 1 with the estimates in \eqref{uniformPD} and \eqref{bounded1Lip}, we find $j_0\in\N$ such that
	\begin{equation}\label{ineq}
	\varphi(x^{k_j})-\varphi(x^{k_j+1})\ge\sigma\tau_{k_j}\langle-\nabla\varphi(x^{k_j}),d^{k_j}\rangle\ge\sigma\gamma\kappa\|d^{k_j}\|^2\ge\sigma\gamma\kappa\ell^{-2}\|\nabla\varphi(x^{k_j})\|^2,\;j\ge j_0.
\end{equation}
Since $\{\varphi(x^k)\}$ is convergent, it follows that the sequence $\{\varphi(x^{k_j})-\varphi(x^{k_j+1})\}_{j\in\N}$ converges to $0$ as $j\to\infty$. Furthermore, we deduce from $\eqref{ineq}$ that $\{\|\nabla\varphi(x^{k_j})\|\}$ also converges to $0$, and therefore $\nabla\varphi(\bar{x})=0$. Combining the latter with the positive-definiteness of $\partial^2\varphi(\bar{x})$ tells us by \cite[Theorem~1.3]{Poli} that $\bar{x}$ is a tilt-stable local minimizer of $\varphi$. This completes the proof.
\end{proof}

\begin{Remark}[\bf iterative sequences may diverge]\label{diver} \rm  Note that Theorem~\ref{globalconver} does not claim anything about the convergence of the iterative sequence $\{x^k\}$. In fact, the divergence of such a sequence can be observe in simple situations under the fulfillment of all the assumptions of Theorem~\ref{globalconver}. To illustrate it, consider the function $\varphi(x):=e^x$ on $\R$ with the positive second derivative $\varphi^{\prime\prime}(x)=e^x>0$ for all $x\in\R$. Running Algorithm~\ref{LS} with the starting point $x^0=1$, it is not hard to check that $\tau_k=1$ and $d^k=-1$ for all $k\in\N$. Thus the sequence of $x^k=1-k$ as $k\in\N$ generated by Algorithm~\ref{LS} is obviously divergent.
\end{Remark}

We conclude this section by giving a simple additional condition to Assumption~\ref{PDofHessian} that ensures the global convergence of any sequence of iterates in Algorithm~\ref{LS}.

\begin{Assumption}\label{boundedoflevelset} The level set $\Omega$ from \eqref{level} is bounded.
\end{Assumption}

To establish the global convergence of Algorithm~\ref{LS}, we first present the following useful lemma of its own interest.

\begin{Lemma}[\bf uniformly positive-definiteness of second-order subdifferentials]\label{uniPD} \text{}
	Let $\varphi\colon\R^n\to\R$ be a function of class $\mathcal{C}^{1,1}$, and let $x^0$ be arbitrary vector for which Assumptions~{\rm\ref{PDofHessian}} and {\rm\ref{boundedoflevelset}} are satisfied. Then there exists $\kappa>0$ such that for each $x\in\Omega$ we have
	\begin{equation}\label{unisecond-order}
	\langle z,w\rangle\ge\kappa\|w\|^2\;\mbox{ whenever }\;z\in\partial^2\varphi(x)(w)\;\mbox{ and }\;w\in\R^n.
	\end{equation}
\end{Lemma}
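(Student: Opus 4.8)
The plan is to upgrade the pointwise positive-definiteness furnished by Assumption~\ref{PDofHessian} to a single uniform constant over $\Omega$ by a compactness argument---precisely the local-to-global passage that already underlies the use of estimate \eqref{uniformPD} inside the proof of Theorem~\ref{globalconver}. First I would observe that $\Omega$ is compact: it is bounded by Assumption~\ref{boundedoflevelset}, and it is closed because $\varphi$ is continuous (being of class $\mathcal{C}^{1,1}$), so $\Omega=\varphi^{-1}\big((-\infty,\varphi(x^0)]\big)$ is a closed sublevel set of $\R^n$.

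Next, fix any $\bar{x}\in\Omega$. By Assumption~\ref{PDofHessian} the mapping $\partial^2\varphi(\bar{x})$ is positive-definite, so \cite[Proposition~4.6]{ChieuLee17} yields a radius $\delta_{\bar{x}}>0$ and a constant $\kappa_{\bar{x}}>0$ with
\begin{equation*}
\langle z,w\rangle\ge\kappa_{\bar{x}}\|w\|^2\quad\text{for all }\;z\in\partial^2\varphi(x)(w),\;x\in\mathbb{B}_{\delta_{\bar{x}}}(\bar{x}),\;w\in\R^n.
\end{equation*}
The family of open balls $\{\mathbb{B}_{\delta_{\bar{x}}}(\bar{x})\}_{\bar{x}\in\Omega}$ covers $\Omega$, since each ball contains its own center. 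Invoking compactness of $\Omega$, I would extract a finite subcover associated with points $x^1,\dots,x^m\in\Omega$ and put $\kappa:=\mathrm{min}\{\kappa_{x^1},\dots,\kappa_{x^m}\}$, which is strictly positive as the minimum of finitely many positive numbers.

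Finally, to verify \eqref{unisecond-order}, take an arbitrary $x\in\Omega$ together with any $w\in\R^n$ and $z\in\partial^2\varphi(x)(w)$. Then $x$ belongs to some ball $\mathbb{B}_{\delta_{x^i}}(x^i)$ of the finite subcover, and the displayed local estimate gives $\langle z,w\rangle\ge\kappa_{x^i}\|w\|^2\ge\kappa\|w\|^2$, as required. I do not expect a genuine obstacle in this argument; the only points deserving care are the closedness of $\Omega$ (needed to pass from boundedness to compactness) and the fact that the constants delivered by \cite[Proposition~4.6]{ChieuLee17} are valid on full balls $\mathbb{B}_{\delta_{\bar{x}}}(\bar{x})$ rather than merely on $\Omega\cap\mathbb{B}_{\delta_{\bar{x}}}(\bar{x})$, which is exactly what permits the finite subcover to produce one uniform $\kappa$.
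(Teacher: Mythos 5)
Your argument is correct and coincides with the paper's own proof: both obtain a local constant $\kappa_x$ on a neighborhood of each $x\in\Omega$ from \cite[Proposition~4.6]{ChieuLee17}, use closedness plus Assumption~\ref{boundedoflevelset} to get compactness of $\Omega$, extract a finite subcover, and take the minimum of the finitely many constants. No differences worth noting.
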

\begin{proof} Since the mapping $\partial^2\varphi(x)$ is positive-definite for each $x\in\Omega$ by Assumption~\ref{PDofHessian}, we deduce from \cite[Proposition~4.6]{ChieuLee17} that there
	exist $\kappa_x>0$ and a neighborhood $U_x$ of $x$ such that
	\begin{equation}\label{uniformPDx}
	\langle z,w\rangle\ge\kappa_x\|w\|^2\;\text{ for all }\;z\in\partial^2\varphi(y)(w),\;y\in U_x,\;\mbox{ and }\;w\in\R^n.
	\end{equation}
	Note that the neighborhood system $\{U_x\;|\;x\in\Omega\}$ provides an open cover of $\Omega$. Using the compactness of the set $\Omega$ due its closedness and Assumption~\ref{boundedoflevelset}, we find finitely many points $x_1,\ldots,x_p\in\Omega$ such that $\Omega\subset\bigcup_{j=1}^p U_{x_j}$. Denoting
	$$
	\kappa:={\rm min}\big\{\kappa_{x_1},\ldots,\kappa_{x_p}\big\}>0,
	$$
	we arrive at the fulfillment of the claimed condition \eqref{unisecond-order} for each $x\in\Omega$.
\end{proof}

Now we are ready to justify the global convergence of Algorithm~\ref{LS}.

\begin{Theorem}[\bf global convergence of the damped Newton algorithm for ${\cal C}^{1,1}$ functions]\label{sufconver} In the setting of Theorem~{\rm\ref{globalconver}}, suppose in addition that Assumption~{\rm\ref{boundedoflevelset}} is satisfied. Then the sequence $\{x^k\}$ is convergent, and its limit is a tilt-stable local minimizer of $\varphi$.
\end{Theorem}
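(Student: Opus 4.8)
The plan is to combine the structural facts already available---well-posedness with $\{x^k\}\subset\Omega$ from Theorem~\ref{globalconver}(i), the tilt-stability of \emph{every} limiting point from Theorem~\ref{globalconver}(ii), and the \emph{uniform} positive-definiteness estimate \eqref{unisecond-order} from Lemma~\ref{uniPD}---with a connectedness argument for the cluster set of the sequence. First I would record that $\Omega$ is compact: it is closed as a sublevel set of the continuous function $\varphi$ and bounded by Assumption~\ref{boundedoflevelset}. Since $x^k\in\Omega$ for all $k$, the sequence $\{x^k\}$ is bounded, so its set of limiting points $L$ is nonempty and satisfies $L\subset\Omega$ (as $\Omega$ is closed). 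As $\{\varphi(x^k)\}$ is nonincreasing by Armijo's inequality and bounded below on the compact set $\Omega$, it converges to some value $\varphi^\ast$.

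The key quantitative step is to show that consecutive iterates get arbitrarily close. Using the uniform estimate \eqref{unisecond-order} at $x^k\in\Omega$ together with $-\nabla\varphi(x^k)\in\partial^2\varphi(x^k)(d^k)$ gives $\langle-\nabla\varphi(x^k),d^k\rangle\ge\kappa\|d^k\|^2$, and since the accepted step sizes satisfy $\tau_k\in(0,1]$ we have $\tau_k\ge\tau_k^2$. Substituting into Armijo's inequality from Step~3 of Algorithm~\ref{LS} yields
\begin{equation*}
\varphi(x^k)-\varphi(x^{k+1})\ge\sigma\kappa\tau_k\|d^k\|^2\ge\sigma\kappa\tau_k^2\|d^k\|^2=\sigma\kappa\|x^{k+1}-x^k\|^2.
\end{equation*}
Summing over $k$ and using the convergence of $\{\varphi(x^k)\}$ to $\varphi^\ast$ shows that $\sum_k\|x^{k+1}-x^k\|^2<\infty$, so in particular $\|x^{k+1}-x^k\|\to 0$. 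Note that this bypasses the step-size lower bound established in Claim~1 of Theorem~\ref{globalconver}.

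Next I would invoke the classical fact that the cluster set $L$ of a bounded sequence with $\|x^{k+1}-x^k\|\to 0$ is compact and \emph{connected}. On the other hand, each $\bar x\in L\subset\Omega$ is a tilt-stable local minimizer by Theorem~\ref{globalconver}(ii), hence $\nabla\varphi(\bar x)=0$, so $L$ is contained in the set of critical points of $\varphi$. Because $\partial^2\varphi(\bar x)$ is positive-definite by Assumption~\ref{PDofHessian}, the argument already used in the proof of Proposition~\ref{descent} (via \cite[Theorems~5.16 and 5.13]{Mor18}) shows that $\nabla\varphi$ is strongly metrically regular around $(\bar x,0)$; thus $(\nabla\varphi)^{-1}$ admits a single-valued localization at $(0,\bar x)$, which means there is a neighborhood $U$ of $\bar x$ containing no critical point of $\varphi$ other than $\bar x$. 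Consequently $L\cap U=\{\bar x\}$, i.e.\ every point of $L$ is isolated in $L$.

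Finally, a connected set all of whose points are isolated must be a singleton, so $L=\{\bar x\}$; hence the whole sequence $\{x^k\}$ converges to $\bar x$, which is a tilt-stable local minimizer of $\varphi$, completing the proof. The main obstacle I anticipate is the isolation step: one must pass from tilt-stability to \emph{isolatedness among critical points} rather than mere strict local minimality, which is precisely where strong metric regularity of $\nabla\varphi$---available here from positive-definiteness of the second-order subdifferential on $\Omega$---is essential. Combining this with connectedness of the cluster set is exactly what upgrades the conclusion ``all limiting points are tilt-stable minimizers'' to genuine convergence of the iterates.
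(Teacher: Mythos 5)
Your proof is correct and follows the same overall skeleton as the paper's: establish $\|x^{k+1}-x^k\|\to 0$ from the Armijo decrease plus the uniform coercivity estimate \eqref{unisecond-order}, show that each cluster point is isolated among the cluster points, and conclude convergence. The differences lie in how the last two ingredients are justified. For the vanishing of successive differences you sum the inequality $\varphi(x^k)-\varphi(x^{k+1})\ge\sigma\kappa\|x^{k+1}-x^k\|^2$ (obtained via $\tau_k\ge\tau_k^2$), which is essentially the paper's derivation of $\tau_k\|d^k\|^2\to 0$ combined with $\|x^{k+1}-x^k\|^2\le\tau_k\|d^k\|^2$; both routes correctly bypass the step-size lower bound of Claim~1. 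For the isolation step the paper invokes \cite[Theorem~4.7]{ChieuLee17} to get local \emph{strong convexity} of $\varphi$ near the tilt-stable minimizer $\bar x$ and rules out a second nearby cluster point that way, whereas you obtain \emph{strong metric regularity} of $\nabla\varphi$ around $(\bar x,0)$ from the positive-definiteness of $\partial^2\varphi(\bar x)$ (exactly as in Proposition~\ref{descent}), so that the single-valued localization of $(\nabla\varphi)^{-1}$ makes $\bar x$ the unique critical point in a neighborhood; both arguments are valid and rest on the same hypothesis. Finally, where the paper cites Ostrowski's condition \cite[Proposition~8.3.10]{JPang}, you reprove it via compactness and connectedness of the cluster set of a bounded sequence with vanishing steps, which makes the argument self-contained at the cost of a little extra topology. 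Your version buys independence from \cite{ChieuLee17} and from the cited Ostrowski proposition; the paper's version is shorter because it leans on those references, and its strong-convexity route is reused later in Theorem~\ref{linearcon}, which is presumably why the authors chose it.
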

\begin{proof} The well-definiteness of the sequence $\{x^k\}$ and the inclusion $\{x^k\}\subset\Omega$ follow from Theorem~\ref{globalconver}. Furthermore, employing Assumptions~\ref{PDofHessian} and \ref{boundedoflevelset}, the inclusion $-\nabla\varphi(x^k)\in\partial^2\varphi(x^k)(d^k)$ for all $k\in\N$, and Proposition~\ref{uniPD} ensures the existence of $\kappa>0$ such that
	\begin{equation}\label{boundeduni}
	\langle-\nabla \varphi(x^{k}),d^{k}\rangle\ge\kappa\|d^{k}\|^2\quad\text{for all }\;k\in\N.
	\end{equation}
	Assumption~\ref{boundedoflevelset} tells us that the sequence $\{x^k\}$ is bounded, and so it has a limiting point $\bar{x}\in\Omega$. Hence the value $\varphi(\bar{x})$ is a limiting point of the numerical sequence $\{\varphi(x^k)\}$. Combining this with the nonincreasing property of $\{\varphi(x^k)\}$ yields the convergence of $\{\varphi(x^k)\}$ to $\varphi(\bar{x})$ as $k\to\infty$. It follows from \eqref{boundeduni} that
	\begin{equation}\label{ineq1}
	\varphi(x^{k})-\varphi(x^{k+1})\ge\sigma\tau_{k}\langle-\nabla\varphi(x^{k}),d^{k}\rangle\ge\sigma\tau_k\kappa\|d^{k}\|^2\;\text{ for all }\;k\in\N.
	\end{equation}
	The above convergence of $\{\varphi(x^k)\}$ implies that the sequence $\{\varphi(x^{k})-\varphi(x^{k+1})\}_{k\in\N}$ converges to $0$ as $k\to\infty$. It follows from $\eqref{ineq1}$ that
	\begin{equation}\label{gradientconverge}
	\lim_{k\to\infty}\tau_k\|d^k\|^2=0.
	\end{equation}
Now let us show that the sequence $\{x^k\}$ converges to $\bar{x}$ as $k\to\infty$ by using Ostrowski's condition from \cite[Proposition~8.3.10]{JPang}. To accomplish this, we verify that there is a neighborhood of $\bar{x}$ within which no other limiting point of $\{x^k\}$ exists, and the following condition holds:
	\begin{equation}\label{Ostrowski}
	\lim_{k\to\infty}\|x^{k+1}-x^k\|=0.
	\end{equation}
	Indeed, tilt stability of the local minimizer $\bar{x}$ of $\ph$ ensures the existence of $\delta>0$ for which $\varphi$ is strongly convex on $\mathbb{B}_\delta(\bar{x})$ due to \cite[Theorem~4.7]{ChieuLee17}. Arguing by contraposition, suppose that there is $\Tilde x\in\mathbb{B}_\delta(\bar{x})$ such that $\Tilde x\ne \bar{x}$ and $\Tilde x$ is a limiting point of $\{x^k\}$. Theorem~\ref{globalconver} tells us that $\Tilde x$ is also a tilt-stable local minimizer of $\varphi$, a contradiction with the strong convexity of $\varphi$ on $\mathbb{B}_\delta(\bar{x})$. Moreover, the construction of $\{x^k\}$ and the condition $\tau_{k}\in(0,1]$ imply the estimate
	$$
	\|x^{k+1}-x^k\|^2=\tau_k^2\|d^k\|^2\le\tau_k\|d^k\|^2\;\mbox{ for all }\;k\in\N.
	$$
	Passing there to the limit as $k\to\infty$ and using \eqref{gradientconverge}, we verify \eqref{Ostrowski}. Finally, it follows from \cite[Proposition~8.3.10]{JPang} that $\{x^k\}$ converges to $\bar{x}$, which thus completes the proof.
\end{proof}

\section{Rates of Convergence of GDNM for ${\cal C}^{1,1}$ Optimization}\label{sec:rate}

This section is devoted to the study of {\em convergence rates} of the globally convergent Algorithm~\ref{LS} for minimizing ${\cal C}^{1,1}$ functions. First we recall the standard notions of convergence rates used in what follows; see \cite[Definition~7.2.1]{JPang}.

\begin{Definition}[\bf rates of convergence]\label{def-rates}\rm Let $\{x^k\}\subset\R^n$ be a sequence of vectors converging to $\ox$ as $k\to\infty$ with $\bar{x}\ne x^k$ for all $k\in\N$. The convergence rate is said to be (at least):
	\begin{itemize}
		\item[\bf(i)] \textsc{R-linear} if
		$$
		0<\limsup_{k\to\infty}\left(\|x^k-\bar{x}\|\right)^{1/k}<1,
		$$
		i.e., there exist $\mu\in(0,1)$, $c>0$, and $k_0\in\N$ such that
		$$
		\|x^{k}-\bar{x}\|\le c\mu^k\quad\text{for all }\;k\ge k_0.
		$$
		\item[\bf(ii)] \textsc{Q-linear} if
		$$
		\limsup_{k\to\infty}\frac{\|x^{k+1}-\bar{x}\|}{\|x^k-\bar{x}\|}<1,
		$$
		i.e., there exist $\mu\in(0,1)$ and $k_0\in\N$ such that
		$$
		\|x^{k+1}-\bar{x}\|\le\mu\|x^k-\bar{x}\|\quad\text{for all }\;k\ge k_0.
		$$
		\item[\bf(iii)] \textsc{Q-superlinear} if
		$$
		\lim_{k\to\infty}\frac{\|x^{k+1}-\bar{x}\|}{\|x^k-\bar{x}\|}=0.
		$$
	\end{itemize}
\end{Definition}

Our first result here establishes the {\em linear convergence} of Algorithm~\ref{LS} under the general assumptions formulated in the preceding section.

\begin{Theorem}[\bf linear convergence of generalized damped Newton algorithm for ${\cal C}^{1,1}$ functions]\label{linearcon} In the setting of Theorem~{\rm\ref{globalconver}}, suppose in addition that the sequence $\{x^k\}$ converges to some vector $\bar{x}$ being such that $x^k\ne\bar{x}$ for all $k\in\N$. Then we have the following assertions:
	\begin{itemize}
		\item[\bf(i)] The sequence $\{\varphi(x^k)\}$ converges to $\varphi(\bar{x}) $ at least Q-linearly.
		\item[\bf(ii)] The sequences $\{x^k\} $ and  $\{\|\nabla\varphi(x^k)\| \}$  converge  to $\bar{x}$ and $0$, respectively at least R-linearly.
	\end{itemize}
\end{Theorem}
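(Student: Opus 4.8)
The plan is to localize everything near the limit $\bar x$ and to combine the \emph{strong convexity} supplied by tilt stability with the descent inequalities already established. By Theorem~\ref{globalconver} the limit $\bar x$ is a tilt-stable local minimizer of $\varphi$, so \cite[Theorem~4.7]{ChieuLee17} provides $\delta>0$ and a modulus $m>0$ for which $\varphi$ is \emph{$m$-strongly convex} on $\mathbb{B}_\delta(\bar x)$; shrinking $\delta$ if necessary, I also assume that $\nabla\varphi$ is Lipschitz on $\mathbb{B}_\delta(\bar x)$ with constant $\ell$ and that the uniform estimate \eqref{uniformPD} holds there with some $\kappa>0$. Since $\nabla\varphi(\bar x)=0$, strong convexity evaluated at the pairs $(\bar x,x)$ and $(x,\bar x)$ furnishes the two workhorses I will use: the \emph{quadratic growth} bound $\varphi(x)-\varphi(\bar x)\ge\frac{m}{2}\|x-\bar x\|^2$ and the local \emph{Polyak--\L ojasiewicz inequality} $\|\nabla\varphi(x)\|^2\ge 2m\big(\varphi(x)-\varphi(\bar x)\big)$, both valid for $x\in\mathbb{B}_\delta(\bar x)$. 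As $x^k\to\bar x$, fix $k_0$ with $x^k\in\mathbb{B}_\delta(\bar x)$ for all $k\ge k_0$.

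First I would bound the backtracking step sizes away from zero on the tail. The descent lemma for $\ell$-Lipschitz gradients shows that Armijo's inequality in Step~3 holds for every $\tau\le 2(1-\sigma)\langle-\nabla\varphi(x^k),d^k\rangle/(\ell\|d^k\|^2)$, and the positive-definiteness bound $\langle-\nabla\varphi(x^k),d^k\rangle\ge\kappa\|d^k\|^2$ coming from \eqref{uniformPD} reduces this threshold to the $k$-independent value $2(1-\sigma)\kappa/\ell$. Since the line search starts at $\tau=1$ and rescales by $\beta$, the accepted step obeys $\tau_k\ge\gamma:=\min\{1,\,2\beta(1-\sigma)\kappa/\ell\}>0$ for all $k\ge k_0$. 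Combining this with Armijo's decrease, the bound $\langle-\nabla\varphi(x^k),d^k\rangle\ge\kappa\|d^k\|^2$, and the Lipschitz estimate $\|\nabla\varphi(x^k)\|\le\ell\|d^k\|$ from \eqref{bounded1Lip} gives, for $k\ge k_0$,
\begin{equation*}
\varphi(x^k)-\varphi(x^{k+1})\ge\sigma\gamma\kappa\|d^k\|^2\ge\sigma\gamma\kappa\ell^{-2}\|\nabla\varphi(x^k)\|^2.
\end{equation*}

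To prove (i) I would substitute the Polyak--\L ojasiewicz inequality into the last display and rearrange into
\begin{equation*}
\varphi(x^{k+1})-\varphi(\bar x)\le(1-c)\big(\varphi(x^k)-\varphi(\bar x)\big),\qquad c:=2\sigma\gamma\kappa m\ell^{-2}>0.
\end{equation*}
Here $c>0$ by construction, and necessarily $c<1$: were $c\ge1$, then since $\varphi(x^{k+1})\ge\varphi(\bar x)$ the displayed inequality would force $\varphi(x^{k+1})=\varphi(\bar x)$, hence $x^{k+1}=\bar x$ by strong convexity, contradicting the standing hypothesis $x^k\ne\bar x$. Thus the gap $\varphi(x^k)-\varphi(\bar x)$ contracts by the factor $1-c\in(0,1)$, which is the asserted Q-linear convergence. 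For (ii), quadratic growth gives $\|x^k-\bar x\|\le\sqrt{2/m}\,\big(\varphi(x^k)-\varphi(\bar x)\big)^{1/2}$, so the geometric decay of the function-value gap yields $\|x^k-\bar x\|\le C\,(1-c)^{k/2}$ and hence R-linear convergence of $\{x^k\}$; finally $\|\nabla\varphi(x^k)\|=\|\nabla\varphi(x^k)-\nabla\varphi(\bar x)\|\le\ell\|x^k-\bar x\|$ transfers this R-linear rate to $\{\|\nabla\varphi(x^k)\|\}$.

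The step that I expect to carry the real weight is the uniform lower bound $\tau_k\ge\gamma>0$: the remaining manipulations are a routine chain of the strong-convexity, positive-definiteness, and Lipschitz inequalities, whereas the contraction factor $1-c$ is meaningful only once $\gamma$ is secured. The care needed there is to invoke the descent-lemma/backtracking argument solely for indices $k\ge k_0$, where both $x^k$ and the relevant trial points $x^k+\tau d^k$ with $\tau\in(0,1]$ lie in the ball $\mathbb{B}_\delta(\bar x)$ on which the constants $\ell$ and $\kappa$ are valid; this membership is guaranteed by $x^k\to\bar x$ together with $d^k\to0$, the latter following from $\kappa\|d^k\|\le\|\nabla\varphi(x^k)\|\to0$.
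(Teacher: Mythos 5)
Your proof is correct and follows essentially the same route as the paper: tilt stability of $\bar{x}$ yields local strong convexity and the uniform positive-definiteness estimate, which combine with a uniform lower bound on the backtracking step sizes and the Armijo decrease to give a geometric contraction of $\varphi(x^k)-\varphi(\bar{x})$, from which the R-linear rates for $\{x^k\}$ and $\{\nabla\varphi(x^k)\}$ follow via quadratic growth and Lipschitz continuity. The only (harmless) deviations are that you derive the Polyak--\L ojasiewicz inequality directly from strong convexity rather than through the chain $\|\nabla\varphi(x^k)\|\ge\kappa\|x^k-\bar{x}\|$ and $\varphi(x^k)-\varphi(\bar{x})\le\frac{\ell}{2}\|x^k-\bar{x}\|^2$ used in the paper, you make the step-size bound $\tau_k\ge\gamma$ quantitative via the descent lemma instead of repeating the paper's contradiction argument, and you justify the contraction factor lying in $(0,1)$ explicitly, which the paper leaves implicit.
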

\begin{proof} Suppose that $\{x^k\}$ converges to $\bar{x}$. It follows from Theorem~\ref{globalconver} that $\bar{x}$ is a tilt-stable local minimizer of $\varphi$. Using now the characterizations of tilt-stable local minimizers taken from \cite[Theorem~4.7]{ChieuLee17}, we deduce that there exist $\kappa>0$ and $\delta>0$ such that $\varphi$ is strongly convex on $\mathbb{B}_\delta(\bar{x})$ with modulus $\kappa$ while satisfying
	\begin{equation}\label{uniformPD1}
	\langle z,w\rangle\ge\kappa\|w\|^2\quad\text{for all }\;z\in\partial^2\varphi(x)(w),\;x\in\mathbb{B}_\delta(\bar{x}),\;\mbox{ and }\;w\in\R^n.
	\end{equation}
	Furthermore, due to the  locally Lipschitz continuity around $\bar{x}$ of $\nabla\varphi$, we suppose without loss of generality that $\nabla\varphi$ is Lipschitz continuous on $\mathbb{B}_\delta(\bar{x})$ with some constant $\ell>0$. The strong convexity of $\varphi$ on $\mathbb{B}_\delta(\bar{x})$ tells us that
	\begin{equation}\label{second-growth}
	\varphi(x)\ge\varphi(u)+\langle\nabla\varphi(u),x-u\rangle+\frac{\kappa}{2}\|x-u\|^2\;\mbox{ and}
	\end{equation}
	\begin{equation}\label{strongineq1}
	\langle\nabla\varphi(x)-\nabla \varphi(u),x-u\rangle\ge\kappa\|x-u\|^2\;\mbox{ for all }\;x,u\in\mathbb{B}_\delta(\bar{x}).
	\end{equation}
	Since $x^k\to\bar{x}$ as $k\to\infty$, we have that $x^k\in U$ for all $k\in\N$ sufficiently large. Substituting $x:=x^k$ and $u:=\bar{x}$ into \eqref{second-growth} and \eqref{strongineq1}, and then using the Cauchy-Schwarz inequality together with $\nabla\varphi(\bar{x})=0$ yield the estimates
	\begin{equation}\label{second-growth1}
	\varphi(x^k)\ge\varphi(\bar{x})+\frac{\kappa}{2}\|x^k-\bar{x}\|^2\;\mbox{ and}
	\end{equation}
	\begin{equation}\label{strongineq}
	\|\nabla\varphi(x^k)\|\ge\kappa\|x^k-\bar{x}\|
	\end{equation}
	for all large $k\in\N$. The local Lipschitz continuity of $\nabla\varphi$ around $\bar{x}$ and the result of \cite[Lemma~A.11]{Solo14} ensure the existence of $\ell>0$ such that
	\begin{equation}\label{Lipschitzineq}
	\varphi(x^k)-\varphi(\bar{x})=|\varphi(x^k)-\varphi(\bar{x})-\langle\nabla\varphi(\bar{x}),x^k-\bar{x}\rangle|\le\frac{\ell}{2}\|x^k-\bar{x}\|^2\quad\text{for large }\;k\in\N.
	\end{equation}
	Furthermore, the Lipschitz continuity of $\nabla\varphi$ on $\mathbb{B}_\delta(\bar{x})$ together with the inclusion $-\nabla\varphi(x^{k})\in\partial^2\varphi(x^{k})(d^{k})$ implies by \cite[Theorem~1.44]{Mordukhovich06} that
	\begin{equation}\label{bounded1Lip1}
	\|\nabla\varphi(x^{k})\|\le\ell\|d^{k}\|\quad\text{for large }\;k\in\N.
	\end{equation}
	Proceeding similarly to the proof of Theorem~\ref{globalconver} tells us that the sequence $\{\tau_k\}$ is bounded from below by some constant $\gamma>0$. Combining the latter with \eqref{uniformPD1} and \eqref{bounded1Lip1} yields
	\begin{equation}\label{ineq2}
	\varphi(x^{k})-\varphi(x^{k+1})\ge\sigma\tau_{k}\langle-\nabla\varphi(x^{k}),d^{k}\rangle\ge\sigma\gamma\kappa\|d^{k}\|^2\ge\sigma\gamma\kappa\ell^{-2}\|\nabla\varphi(x^{k})\|^2
	\end{equation}
	for all large $k\in\N$. Therefore, for such $k$ we deduce from \eqref{strongineq}, \eqref{Lipschitzineq}, and \eqref{ineq2} that
	\begin{equation*}
	\varphi(x^{k+1})-\varphi(x^k)\le-\sigma\gamma\kappa\ell^{-2}\|\nabla\varphi(x^{k})\|^2\le-\sigma\gamma\kappa^3\ell^{-2}\|x^k-\bar{x}\|^2\le-2\sigma\gamma\kappa^{3}\ell^{-3}\big(\varphi(x^k)
	-\varphi(\bar{x})\big).
	\end{equation*}
	This allows us to find $k_0\in\N$ such that
	$$
	\varphi(x^{k+1})-\varphi(\bar{x})\le\mu\big(\varphi(x^k)-\varphi(\bar{x})\big)\quad\text{whenever }\;k\ge k_0,
	$$
	which verifies (i) with $\mu:=1-2\sigma\gamma\kappa^{3}\ell^{-3}\in(0,1)$. It readily follows from \eqref{second-growth1} that
	$$
	\sqrt{\frac{2}{\kappa}\big(\varphi(x^k)-\varphi(\bar{x})\big)}\le\sqrt{\frac{2\mu}{\kappa}\big(\varphi(x^{k-1})-\varphi(\bar{x})\big)}\le\ldots\le\sqrt{\frac{2\mu^{k-k_0}}{\kappa}\big(\varphi(x^{k_0})
		-\varphi(\bar{x})\big)}
	$$
	whenever $k\ge k_0$. Thus we get $\|x^k-\bar{x}\|\le M\lambda^k$ for such $k$, where
	$$
	M:=\sqrt{\frac{2}{\kappa}\mu^{-k_0}\big(\varphi(x^{k_0})-\varphi(\bar{x})\big)} \quad\text{and}\quad\lambda:=\sqrt{\mu}.
	$$
	Since $\lambda\in(0,1)$, it follows that $\displaystyle\lim_{k\to\infty}\lambda^k=0$, which implies that the sequence $\{x^k\}$ converges at least R-linearly to $\bar{x}$ as $k\to\infty$. Employing again the Lipschitz continuity of $\nabla\varphi$ around $\bar{x}$ with constant $\ell>0$, we arrive at
	$$
	\|\nabla\varphi(x^k)\|=\|\nabla\varphi(x^k)-\nabla\varphi(\bar{x})\|\le\ell\|x^k-\bar{x}\|\le\ell M\lambda^k\quad\text{for all }\;k\ge k_0,
	$$
	which verifies assertion (ii) and thus completes the proof of the theorem. \end{proof}

To proceed further with deriving verifiable conditions ensuring the Q-superlinear convergence of Algorithm~\ref{LS}, we need to recall some important notions from variational analysis. A crucial role in the theory and applications of nonsmooth Newton-type methods is played by a remarkable subclass of single-valued locally Lipschitzian mappings defined as follows; see the books \cite{JPang,Solo14,Klatte} with the commentaries and references therein. A mapping $f\colon\R^n\to\R^m$ is {\em semismooth} at $\bar{x}$ if it is locally Lipschitzian around this point and the limit
\begin{equation}\label{semi-sm}
\lim_{A\in\text{\rm co}\overline{\nabla}f(\bar{x}+tu')\atop u'\to u,t\downarrow 0}Au'
\end{equation}
exists for all $u\in\R^n$, where `co' stands for the convex hull of a set, and where $\overline{\nabla}f$ is given by
$$
\overline{\nabla}f(x):=\big\{A\in\R^{m\times n}\big|\;\exists\ x_k\overset{\Omega_{f}}\to x\;\text{ such that }\;\nabla f(x_k)\to A\big\},\quad x\in\R^n,
$$
with $\Omega_{f}:=\{x\in\R^n\;|\;f\;\text{ is differentiable at }\;x\}$. Quite recently \cite{Helmut}, the concept of semismoothness has been improved and extended to set-valued mappings. To formulate the latter notion, recall first the construction of the {\em directional limiting normal cone} to a set $\Omega\subset\R^s$ at $\oz\in\O$ in the direction $d\in\R^s$ introduced in \cite{gin-mor} by
\begin{equation}\label{dir-nc}
N_\Omega(\oz;d):=\big\{v\in\R^s\;\big|\;\exists\,t_k\dn 0,\;d_k\to d,\;v_k\to v\;\mbox{ with }\;v_k\in\widehat{N}_\Omega(\oz+t_k d_k)\big\}.
\end{equation}
It is obvious that \eqref{dir-nc} agrees with the limiting normal cone \eqref{lnc} for $d=0$. The {\em directional limiting coderivative} of $F\colon\R^n\tto\R^m$ at $(\ox,\oy)\in\gph F$ in the direction $(u,v)\in\R^n\times\R^m$ is defined in \cite{g} via \eqref{dir-nc} by the scheme
\begin{equation}\label{dir-cod}
D^*F\big((\ox,\oy);(u,v)\big)(v^*):=\big\{u^*\in\R^n\;\big|\;(u^*,-v^*)\in N_{\text{gph}\,F}\big((\ox,\oy);(u,v)\big)\big\},\;v^*\in\R^m.
\end{equation}
Using \eqref{dir-cod}, we come to the aforementioned property of set-valued mappings introduced in \cite{Helmut}.

\begin{Definition}[\bf semismooth$^*$ property of set-valued mappings]\label{semi*} A mapping $F\colon\R^n\tto\R^m$ is {\sc semismooth$^*$} at $(\bar{x},\bar{y})\in\gph F$ if whenever $(u,v)\in\R^n\times\R^m$ we have
	\begin{equation*}
	\langle u^*,u\rangle=\langle v^*,v\rangle\;\mbox{ for all }\;(v^*,u^*)\in\gph D^*F\big((\ox,\oy);(u,v)\big).
	\end{equation*}
\end{Definition}
Recall \cite{Helmut} that  the semismoothness$^*$ holds if the graph of $F\colon\R^n\tto\R^m$ is represented as a union of finitely many closed and convex sets as well as for normal cone mappings generated by convex polyhedral sets. Note also that the semismooth$^*$ property of single-valued locally Lipschitzian mappings $f\colon\R^n\to\R^m$ around $\ox$ agrees with the semismooth property \eqref{semi-sm} at this point provided that $f$ directionally differentiable at $\ox$.\vspace*{0.05in}

Prior to deriving a major theorem on the $Q$-superlinear convergence
of Algorithm~\ref{LS}, we present an important property of
$\mathcal{C}^{1,1}$ functions with semismooth$^*$ derivatives.

\begin{Proposition}[\bf directional estimate for functions with semismooth$^*$ derivatives]\label{Maratos} Let $\varphi\colon\R^n\to\R$ be continuously differentiable around $\bar{x}\in\R^n$ with
	$\nabla\varphi(\bar{x})=0$. Suppose that  $\nabla\varphi$ is locally
	Lipschitzian around this point with modulus $\ell>0$, and that $\nabla\varphi$ is
	semismooth$^*$ at this point. Assume further that a sequence $\{x^k\}$
	converges to $\bar{x}$ with $x^k\ne\bar{x}$ as $k\in\N$, and that a
	sequence $\{d^k\}$ is superlinearly convergent with respect to
	$\{x^k\}$, i.e., $\|x^k+d^k-\bar{x}\|= o(\|x^k-\bar{x}\|)$. Consider
	the following statements:
	\begin{itemize}
		\item[\bf(i)] $\nabla \varphi$ is directionally differentiable at $\bar{x}$, and there exists $\kappa>0$ such that $\langle\nabla\varphi(x^k),d^k\rangle\le-\frac{1}{\kappa}\|d^k\|^2$ for all
		sufficiently large $k \in \N$.
		
		\item[\bf(ii)] There exists $\kappa>0$ such that
		\begin{equation}\label{growthdk}
		\varphi(x^k+d^k)-\varphi(x^k)\le\langle \nabla
		\varphi(x^k+d^k),d^k\rangle-\frac{1}{2\kappa}\|d^k\|^2
		\end{equation}
		for all sufficiently large $k \in \N$.
	\end{itemize}
	Then we have the estimate
	\begin{equation}\label{backtrackinghold}
	\varphi(x^k+d^k)\le\varphi(x^k)+\sigma\langle\nabla\varphi(x^k),d^k\rangle\;\mbox{
		for all large }\;k\in\N
	\end{equation}
	provided that either {\rm(i)} holds  {and $\sigma\in(0, 1/2)$}, or {\rm
		(ii)} holds and $\sigma \in\left(0,1/(2\ell\kappa) \right)$
\end{Proposition}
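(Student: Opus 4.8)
The plan is to reduce both implications to a comparison between the actual decrease $\varphi(x^k+d^k)-\varphi(x^k)$ and the predicted decrease $\langle\nabla\varphi(x^k),d^k\rangle$, exploiting that the superlinearity $\|x^k+d^k-\bar{x}\|=o(\|x^k-\bar{x}\|)$ forces $d^k=-(x^k-\bar{x})+o(\|x^k-\bar{x}\|)$ and hence $\|d^k\|=(1+o(1))\|x^k-\bar{x}\|$. First I would record three elementary consequences of $\nabla\varphi(\bar{x})=0$ together with the Lipschitz modulus $\ell$: the ``far'' endpoint $x^k+d^k$ carries a negligible gradient and value,
\[
\langle\nabla\varphi(x^k+d^k),d^k\rangle=o(\|d^k\|^2),\qquad \varphi(x^k+d^k)-\varphi(\bar{x})=o(\|d^k\|^2),
\]
and, in addition, $\langle\nabla\varphi(x^k),x^k-\bar{x}\rangle=-\langle\nabla\varphi(x^k),d^k\rangle+o(\|d^k\|^2)$. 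The first two follow from $\|\nabla\varphi(x^k+d^k)\|\le\ell\|x^k+d^k-\bar{x}\|=o(\|d^k\|)$ and the quadratic bound $|\varphi(z)-\varphi(\bar{x})|\le\frac{\ell}{2}\|z-\bar{x}\|^2$; the third from substituting $d^k=-(x^k-\bar{x})+o(\|d^k\|)$.

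For part (ii) no semismoothness is needed. Feeding the first preliminary estimate into the hypothesis \eqref{growthdk} gives $\varphi(x^k+d^k)-\varphi(x^k)\le-\frac{1}{2\kappa}\|d^k\|^2+o(\|d^k\|^2)$, while Cauchy--Schwarz and the Lipschitz bound yield the one-sided estimate $\sigma\langle\nabla\varphi(x^k),d^k\rangle\ge-\sigma\ell(1+o(1))\|d^k\|^2$, valid irrespective of the sign of the inner product. Comparing the two and dividing by $\|d^k\|^2$ reduces \eqref{backtrackinghold} to the asymptotic inequality $-\frac{1}{2\kappa}\le-\sigma\ell$, which holds with room to spare precisely because $\sigma<1/(2\ell\kappa)$, the strictness absorbing the $o(1)$ terms for all large $k$.

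Part (i) rests on the ``half-factor'' expansion
\[
\varphi(x^k+d^k)-\varphi(x^k)=\tfrac12\langle\nabla\varphi(x^k),d^k\rangle+o(\|d^k\|^2),
\]
after which the conclusion is immediate: the left side minus $\sigma\langle\nabla\varphi(x^k),d^k\rangle$ equals $(\tfrac12-\sigma)\langle\nabla\varphi(x^k),d^k\rangle+o(\|d^k\|^2)$, and the descent bound $\langle\nabla\varphi(x^k),d^k\rangle\le-\frac1\kappa\|d^k\|^2$ together with $\sigma<1/2$ makes this $\le0$ for large $k$. To prove the expansion I would combine the second preliminary estimate $\varphi(x^k+d^k)-\varphi(\bar{x})=o(\|d^k\|^2)$ with the semismooth identity
\[
\varphi(x^k)-\varphi(\bar{x})=\tfrac12\langle\nabla\varphi(x^k),x^k-\bar{x}\rangle+o(\|x^k-\bar{x}\|^2),
\]
and then convert $\langle\nabla\varphi(x^k),x^k-\bar{x}\rangle$ into $-\langle\nabla\varphi(x^k),d^k\rangle$ via the third preliminary estimate. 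The semismooth identity itself is obtained by writing $\varphi(x^k)-\varphi(\bar{x})=\int_0^1\langle\nabla\varphi(\bar{x}+t(x^k-\bar{x})),x^k-\bar{x}\rangle\,dt$ and using that, since $\nabla\varphi$ is directionally differentiable at $\bar{x}$, its semismooth$^*$ property coincides with ordinary semismoothness, which delivers $\nabla\varphi(\bar{x}+t(x^k-\bar{x}))-t\,\nabla\varphi(x^k)=o(\|x^k-\bar{x}\|)$.

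The main obstacle is exactly this last step. The semismooth estimate $\nabla\varphi(\bar{x}+h)-\nabla\varphi(\bar{x})-(\nabla\varphi)'(\bar{x};h)=o(\|h\|)$ is a pointwise statement as $h\to0$, whereas the integral requires it uniformly in $t\in[0,1]$, with remainder measured against $\|x^k-\bar{x}\|$ rather than against the shrinking $\|t(x^k-\bar{x})\|$. The remedy is to observe that ``$o(\|h\|)$ as $h\to0$'' means the remainder ratio $\rho(h)$ satisfies $\sup_{0<\|h\|\le r}\rho(h)\to0$ as $r\downarrow0$; applying this with $r=\|x^k-\bar{x}\|$, together with positive homogeneity of $(\nabla\varphi)'(\bar{x};\cdot)$, turns the $t$-integral of the remainder into an $o(\|x^k-\bar{x}\|^2)$ quantity, which is all that the expansion requires.
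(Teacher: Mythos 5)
Your proof is correct. For case (ii) your argument is essentially the paper's own: both bound $\langle\nabla\varphi(x^k+d^k),d^k\rangle$ by $\ell\|x^k+d^k-\bar{x}\|\,\|d^k\|=o(\|d^k\|^2)$ and $-\sigma\langle\nabla\varphi(x^k),d^k\rangle$ by $\sigma\ell\|x^k-\bar{x}\|\,\|d^k\|$, normalize by $\|d^k\|^2$ using $\|x^k-\bar{x}\|/\|d^k\|\to 1$ (which the paper imports from \cite[Lemma~7.5.7]{JPang}), and let the strict inequality $\sigma\ell<1/(2\kappa)$ absorb the vanishing terms. The genuine divergence is in case (i): after passing from the semismooth$^*$ property to ordinary semismoothness via \cite[Corollary~3.8]{Helmut}, the paper simply cites \cite[Proposition~8.3.18]{JPang} and stops, whereas you re-derive the content of that citation, namely the half-factor expansion $\varphi(x^k+d^k)-\varphi(x^k)=\tfrac{1}{2}\langle\nabla\varphi(x^k),d^k\rangle+o(\|d^k\|^2)$, from the integral mean-value formula, positive homogeneity of the directional derivative, and the quadratic bound $|\varphi(z)-\varphi(\bar{x})|\le\tfrac{\ell}{2}\|z-\bar{x}\|^2$ applied at the far endpoint $z=x^k+d^k$. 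You correctly isolate and resolve the one delicate point---uniformity over $t\in[0,1]$ of the $o(\|h\|)$ remainder---by taking the supremum of the remainder ratio over the ball of radius $\|x^k-\bar{x}\|$; without that step the integral argument would be incomplete. What your route buys is self-containedness; what it costs is length. A minor observation: your derivation of the half-factor expansion in fact uses only the local Lipschitz continuity of $\nabla\varphi$ together with its directional differentiability at $\bar{x}$ (ordinary directional differentiability upgrades to the Hadamard form for locally Lipschitzian maps), not semismoothness itself; the semismooth$^*$ hypothesis enters case (i) only so as to match the hypotheses of the quoted Facchinei--Pang result, so nothing in your argument is at risk.
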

\begin{proof}
	Suppose that (i) holds. The directional differentiability and
	semismoothness$^*$ of $\nabla \varphi$ at $\bar{x}$ ensures by
	\cite[Corollary~3.8]{Helmut} that $\nabla\varphi$ is semismooth at
	$\bar{x}$. Using now \cite[Proposition~8.3.18]{JPang}, we
	immediately get \eqref{backtrackinghold}. Suppose next that (ii) is
	satisfied and $\sigma\in\left(0, 1/(2\ell\kappa) \right)$. Since
	$\{d^k\}$ is superlinearly convergent with respect to $\{x^k\}$, by
	employing \cite[Lemma~7.5.7]{JPang} we have
	\begin{equation}\label{superlineark+1}
	\lim_{k\to\infty}\|x^k-\bar{x}\|/\|d^k\|=1,
	\end{equation}
	which ensures in turn that
	\begin{equation}\label{supdk}
	\|x^k+d^k-\bar{x}\|=o(\|d^k\|)\;\text{ as }\;k\to\infty.
	\end{equation}
	Then the statement in (ii) leads us to the inequalities
	\begin{eqnarray*}
		\varphi(x^k+d^k)-\varphi(x^k)-\sigma\langle\nabla\varphi(x^k),d^k\rangle&\le&\langle\nabla\varphi(x^k+d^k),d^k\rangle-\frac{1}{2\kappa}\|d^k\|^2-\sigma\langle\nabla\varphi(x^k),d^k\rangle\\
		&\le&\|\nabla\varphi(x^k+d^k)\|\cdot\|d^k\|-\frac{1}{2\kappa}\|d^k\|^2+\sigma\|\nabla\varphi(x^k)\|\cdot\|d^k\|\\
		&\le&\ell\|x^{k}+d^k-\bar{x}\|\cdot\|d^k\|-\frac{1}{2\kappa}\|d^k\|^2+\sigma\ell\|x^k-\bar{x}\|\cdot\|d^k\|\\
		&\le&\|d^k\|^2
		\left(\ell\frac{\|x^{k}+d^k-\bar{x}\|}{\|d^k\|}-\frac{1}{2\kappa}+\sigma\ell\frac{\|x^k-\bar{x}\|}{\|d^k\|}
		\right)
	\end{eqnarray*}
	for all large $k \in \N$. Finally, it follows from the inequality
	$\sigma<1/(2\ell\kappa)$, \eqref{superlineark+1}, and \eqref{supdk}
	that
	$$
	\varphi(x^k+d^k)-\varphi(x^k)-\sigma\langle\nabla\varphi(x^k),d^k\rangle\leq
	0 \quad \text{whenever } \; k\in\N \;  \text{ is sufficiently
		large},
	$$
	which readily justifies \eqref{backtrackinghold} and completes the
	proof of the proposition.
\end{proof}

Now we are ready to derive the main result of this section that
establishes the Q-superlinear convergence of Algorithm~\ref{LS}
under the imposed assumptions.

\begin{Theorem}[\bf superlinear convergence of the generalized
	damped Newton algorithm for ${\cal C}^{1,1}$
	functions]\label{superlinearLS} In the setting of
	Theorem~{\rm\ref{globalconver}}, suppose that $\{x^k\}$ converges
	$\bar{x}$ as $k\to\infty$ with $x^k\ne\bar{x}$ for all $k\in\N$, and
	that $\nabla\varphi$ is locally Lipschitzian around $\bar{x}$ with
	some constant $\ell>0$ being also
	semismooth$^*$ at this point. Then the convergence rate of
	$\{x^k\}$ is at least Q-superlinear if either one of the following
	two conditions holds: \begin{itemize} \item[\bf(i)] $\nabla\varphi$
		is directionally differentiable at $\bar{x}$. \item[\bf(ii)]
		$\sigma\in\left(0,1/(2\ell\kappa)\right)$, where $\kappa>0$ is a
		modulus of tilt stability of $\ph$ at $\ox$. \end{itemize}
	Furthermore, in both cases {\rm(i)} and {\rm(ii)} the sequence
	$\{\varphi(x^k)\}$ converges Q-superlinearly to $\varphi(\bar{x})$,
	and the sequence $\{\nabla\varphi(x^k)\}$ converges Q-superlinearly
	to $0$ as $k\to\infty$. \end{Theorem}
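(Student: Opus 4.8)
The plan is to show that, after finitely many iterations, the backtracking line search always accepts the unit step $\tau_k=1$, so that Algorithm~\ref{LS} eventually coincides with the pure Newton iteration $x^{k+1}=x^k+d^k$; the Q-superlinear rate then follows from the local superlinear behavior of the Newton step. As a preliminary I would invoke Theorem~\ref{globalconver} to record that $\bar{x}$ is a tilt-stable local minimizer, hence $\nabla\varphi(\bar{x})=0$, and use \cite[Proposition~4.6 and Theorem~4.7]{ChieuLee17} to fix a radius $\delta>0$ on which $\varphi$ is strongly convex and $\partial^2\varphi$ is uniformly positive-definite, say $\langle z,w\rangle\ge c\|w\|^2$ for $z\in\partial^2\varphi(x)(w)$ and $x\in\mathbb{B}_\delta(\bar{x})$. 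Since $x^k\to\bar{x}$, the iterates lie in $\mathbb{B}_\delta(\bar{x})$ for all large $k$.

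First I would establish the superlinear one-step estimate $\|x^k+d^k-\bar{x}\|=o(\|x^k-\bar{x}\|)$. Because $\nabla\varphi$ is semismooth$^*$ at $\bar{x}$ and strongly metrically regular around $(\bar{x},0)$ (tilt stability, exactly as exploited in the proof of Proposition~\ref{descent}), this is precisely the local superlinear analysis of the generalized pure Newton method from \cite{BorisKhanhPhat}, applied pointwise to the $x^k\in\mathbb{B}_\delta(\bar{x})$. This supplies the hypothesis of Proposition~\ref{Maratos} that $\{d^k\}$ is superlinearly convergent relative to $\{x^k\}$.

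Next I would feed this into Proposition~\ref{Maratos} to force acceptance of the unit step. In case (i) the uniform positive-definiteness together with $-\nabla\varphi(x^k)\in\partial^2\varphi(x^k)(d^k)$ gives the descent bound $\langle\nabla\varphi(x^k),d^k\rangle\le-c\|d^k\|^2$, and since $\sigma\in(0,1/2)$ from the setup of Algorithm~\ref{LS}, statement (i) of Proposition~\ref{Maratos} applies. In case (ii) I would derive the growth condition \eqref{growthdk} by substituting $x:=x^k$ and $u:=x^k+d^k$ into the strong-convexity inequality with modulus $1/\kappa$ (the reciprocal of the prescribed tilt-stability modulus $\kappa$); then statement (ii) of Proposition~\ref{Maratos} applies precisely because $\sigma\in(0,1/(2\ell\kappa))$. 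Either way one obtains $\varphi(x^k+d^k)\le\varphi(x^k)+\sigma\langle\nabla\varphi(x^k),d^k\rangle$ for all large $k$, which is Armijo's inequality at $\tau_k=1$. Hence the line search stops at the first try with $\tau_k=1$, so $x^{k+1}=x^k+d^k$ eventually, and the one-step estimate upgrades to $\|x^{k+1}-\bar{x}\|=o(\|x^k-\bar{x}\|)$, the Q-superlinear rate for $\{x^k\}$.

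Finally, I would transfer this rate to the function values and gradients via the two-sided comparisons $c\|x^k-\bar{x}\|\le\|\nabla\varphi(x^k)\|\le\ell\|x^k-\bar{x}\|$ (from strong monotonicity and Lipschitz continuity, with $\nabla\varphi(\bar{x})=0$) and $\frac{c}{2}\|x^k-\bar{x}\|^2\le\varphi(x^k)-\varphi(\bar{x})\le\frac{\ell}{2}\|x^k-\bar{x}\|^2$ (quadratic growth and the Lipschitz descent lemma \cite[Lemma~A.11]{Solo14}), which immediately yield the Q-superlinear rates of $\{\nabla\varphi(x^k)\}$ to $0$ and of $\{\varphi(x^k)\}$ to $\varphi(\bar{x})$. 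The main obstacle is the acceptance of the unit step: near a nonsmooth solution the full Newton step need \emph{not} satisfy the Armijo test (a Maratos-type phenomenon), so the whole argument hinges on marrying the semismooth$^*$ superlinear estimate with the sharp threshold $1/(2\ell\kappa)$ of Proposition~\ref{Maratos} to guarantee that full steps are eventually taken.
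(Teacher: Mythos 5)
Your proposal is correct and follows essentially the same three-step route as the paper: the one-step superlinear estimate $\|x^k+d^k-\bar{x}\|=o(\|x^k-\bar{x}\|)$ obtained from semismoothness$^*$ together with the uniform positive-definiteness coming from tilt stability, then Proposition~\ref{Maratos} to force acceptance of the unit step in both cases (i) and (ii) (your substitution $x:=x^k$, $u:=x^k+d^k$ into the second-order growth inequality with modulus $1/\kappa$ is exactly how the paper derives \eqref{growthdk}), and finally the reduction to the pure Newton iteration. The only cosmetic differences are that the paper spells out the one-step estimate via the coderivative subadditivity and semismooth$^*$ lemmas of \cite{BorisKhanhPhat} where you defer to that reference wholesale, and that for the rates of $\{\varphi(x^k)\}$ and $\{\nabla\varphi(x^k)\}$ you give a self-contained two-sided sandwich argument while the paper cites \cite[Theorems~5.7 and 5.12]{BorisKhanhPhat}.
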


\begin{proof} Suppose that the sequence $\{x^k\}$ converges as $k\to\infty$ to a point $\bar{x}\in\R^n$. We split the proof of the theorem into the following three claims.\\[1ex]
	{\bf Claim~1:} {\em The sequence $\{d^k\}$ is superlinearly
		convergent with respect to $\{x^k\}$}. Indeed, it follows from
	Theorem~\ref{globalconver} that $\ox$ is tilt-stable local minimizer
	of $\ph$ with some modulus $\kappa>0$. By using the characterization
	of tilt-stable minimizers via the combined second-order
	subdifferential taken from \cite[Theorem~3.5]{MorduNghia} and
	\cite[Proposition 4.6]{ChieuLee17}, we find a positive number
	$\delta$ such that \begin{equation}\label{uniformPDrate} \langle
	z,w\rangle\ge\frac{1}{\kappa}\|w\|^2\quad\text{for all
	}\;z\in\partial^2\varphi(x)(w),\;x\in\mathbb{B}_\delta(\bar{x}),\;\mbox{
		and }\;w\in\R^n. \end{equation} Using the subadditivity property of
	coderivatives obtained in \cite[Lemma~5.6]{BorisKhanhPhat}, we have
	$$
	\partial^2\varphi(x^k)(d^k)\subset\partial^2\varphi(x^k)(x^k+d^k-\bar{x})+\partial^2\varphi(x^k)(-x^k+\bar{x}).
	$$ Since $-\nabla\varphi(x^k)\in\partial^2\varphi(x^k)(d^k)$, for
	all $k\in\N$ there exists
	$v^k\in\partial^2\varphi(x^k)(-x^k+\bar{x})$ such that $$
	-\nabla\varphi(x^k)-v^k\in\partial^2\varphi(x^k)(x^k+d^k-\bar{x}).
	$$ Employing \eqref{uniformPDrate} and the Cauchy-Schwarz
	inequality, we have \begin{equation}\label{semi}
	\|x^k+d^k-\bar{x}\|\le\kappa\|\nabla\varphi(x^k)+v^k\|\quad\text{for
		sufficiently large }\;k\in\N. \end{equation} By the
	semismoothness$^*$ of $\nabla\varphi$ at $\bar{x}$  together with
	$\nabla\varphi(\bar{x})=0$  and \cite[Lemma~5.5]{BorisKhanhPhat}, we
	have \begin{equation}\label{semi2}
	\|\nabla\varphi(x^k)+v^k\|=\|\nabla\varphi(x^k)-\nabla\varphi(\bar{x})+v^k\|=o(\|x^k-\bar{x}\|).
	\end{equation}
	Combining \eqref{semi} and \eqref{semi2} tells us that $\|x^k+d^k-\bar{x}\|=o(\|x^k-\bar{x}\|)$ as $k\to\infty$, which thus completes the proof of this claim.\\[1ex]
	{\bf Claim~2:} {\em We have $\tau_k=1$ for all $k\in\N$ sufficiently
		large provided that either condition {\rm(i)} or {\rm(ii)} of this
		theorem holds}. To proceed, we need to show that for all $k\in\N$
	sufficiently large, we get the inequality \eqref{backtrackinghold}.
	Assume first that (i) holds. Due to the inclusion
	$-\nabla\varphi(x^k)\in\partial^2\varphi(x^k)(d^k)$ for all $k \in
	\N$ and the estimate \eqref{uniformPDrate}, we have
	$\langle\nabla\varphi(x^k),d^k\rangle\le -\frac{1}{\kappa}
	\|d^k\|^2$ whenever $k\in\N$ is sufficiently large. Then by
	Proposition~\ref{Maratos} we get \eqref{backtrackinghold}, which
	readily verifies the claimed assertion in case (i).  Assuming now
	the conditions in (ii) and using Claim 1, we easily get that the
	sequence $\{d^k\}$ converges to $0$ as $k\to\infty$, and that
	$x^k+d^k \to \bar{x}$ as $k \to \infty$. Employing the uniform
	second-order growth condition for tilt-stable minimizers from
	\cite[Theorem~3.2]{MorduNghia} gives us a neighborhood $U$ of
	$\bar{x}$ such that \begin{equation*}
	\varphi(x)\ge\varphi(u)+\langle\nabla\varphi(u),x-u\rangle+\frac{1}{2\kappa}\|x-u\|^2\quad\text{for
		all }\;x,u\in U, \end{equation*} which implies the inequality
	\eqref{growthdk}. Then by Proposition~\ref{Maratos} we get
	\eqref{backtrackinghold},
	and hence complete verifying the the claimed assertion in this case.\\[1ex]
	{\bf Claim~3:} {\em The conclusions of the theorem about the
		Q-superlinear convergence of the sequences $\{x^k\}$,
		$\{\ph(x^k)\}$, and $\{\nabla\ph(x^k)\}$ hold provided that  either
		condition {\rm(i)} or {\rm(ii)} of this theorem holds}. To justify
	this assertion, we get from Claim~2 that $\tau_k=1$ for all
	sufficiently large $k\in\N$, and thus Algorithm~\ref{LS} eventually
	becomes Algorithm \ref{NM}. Hence the claimed convergence results
	follow from \cite[Theorems~5.7 and 5.12]{BorisKhanhPhat}.  This
	verifies the assertions of Claim~3 and completes the proof of the
	theorem.\end{proof}

\section{ {GDNM for Problems of Quadratic Composite Optimization}}\label{sec:dampnon}

In this section we study a special class of constrained optimization problem given in the form
\begin{eqnarray}\label{QP0}
\text{minimize }\;\varphi(x):=f(x)+g(x),\quad x\in\R^n,
\end{eqnarray}
where $f\colon\R^n\to\R$ is a convex and smooth function, while $g\colon\R^n\to\oR$ is a convex and extended-real-valued one. This class is known as problems of {\em convex composite optimization}. Observe that, although \eqref{QP0} is written in the unconstrained format, it includes constrained optimization problems with the effective constraints $x\in\dom g$.

In what follows we pay the main attention to a subclass of \eqref{QP0} described by
\begin{eqnarray}\label{QP}
\text{minimize }\;\varphi(x):=\frac{1}{2}\langle Ax,x\rangle+\langle b,x\rangle+\alpha+g(x),\quad x\in\R^n,
\end{eqnarray}
where $A\in\R^{n\times n}$ is a positive-semidefinite matrix, $b\in\R^n$, and $\alpha\in\R$. That is, \eqref{QP} is a subclass of \eqref{QP0} with $f$ being a quadratic function. This allows us to label \eqref{QP} as problems of {\em quadratic composite optimization}.

Note that problems of type \eqref{QP} frequently appear, e.g., in practical models of machine learning and statistics. In particular, various {\em Lasso problems} considered in Section~\ref{Lassosec} can be written in form \eqref{QP}. Here are some other important classes of problems arising in practical modeling, which are reduced to \eqref{QP}.

\begin{Example}[\bf support vector machine problems]\rm Given the training data $(x_i,y_i)$, $i=1,\ldots,m$, where $x_i\in\R^n$ are the observations and $y_i\in\{-1,1\}$ are the labels, the \textit{support vector classification} is a problem of finding a hyperplane $y=\langle w,x\rangle+b$ such that the data with different labels can be separated by the hyperplane. One of the most popular {\em support vector machine} models \cite{HCL} is the regularized penalty model
	\begin{eqnarray}\label{SVM}
	\text{minimize }\;\varphi(w,b):=\frac{1}{2}\|w\|^2+C\sum_{i=1}^{m}\xi(w,x_i,y_i,b)\;\mbox{ with }\;w\in\R^n\;\mbox{ and }\;b\in\R,
	\end{eqnarray}
	where $C>0$ is a penalty parameter, and where $\xi\colon\R^n\times\R^n\times\R\times\R\to\R$ is called a {\em loss function}. Typical loss functions are given as follows:
	\begin{itemize}
		\item[\bf(i)] \textit{L1-loss}, or \textit{$\ell_1$ hinge loss}: $\xi(w,x_i,y_i,b)=\max\{1-y_i(\langle w,x_i\rangle+b),0\}$.
		\item[\bf(ii)] \textit{L2-loss}, or \textit{squared hinge loss}: $\xi(w,x_i,y_i,b)=\max\{1-y_i(\langle w,x_i\rangle+b),0\}^2$.
		\item[\bf(iii)] \textit{Logistic loss}: $\xi(w,x_i,y_i,b)=\log(1+e^{-y_i(\langle w,x_i\rangle+b)})$.
	\end{itemize}
\end{Example}

\begin{Example}[\bf convex clustering problems] \rm Let $A:=[a_1,\ldots,a_n]\in\R^{d\times n}$ be a given matrix with $n$ observations and $d$ features. The {\em convex clustering} model \cite{BMPS} is described in the form of quadratic composite optimization \eqref{QP} by:
	\begin{eqnarray}\label{clustering}
	\text{minimize }\; \frac{1}{2}\sum_{i=1}^{n}\|x_i-a_i\|^2+\gamma\sum_{i<j}\|x_i-x_j\|_p,\quad X\in\R^{d\times n},
	\end{eqnarray}
	where $\gamma>0$ is a tuning parameter, and where $\|\cdot\|_p$ denotes the $p$-norm. Typically the norm parameter $p$ is chosen as $1,2,\infty$.
\end{Example}

\begin{Example}[\bf constrained quadratic optimization] \rm Consider problem \eqref{QP}, where $g$ is the indicator function of a nonempty, closed, and convex set $\Omega$. Then \eqref{QP} is known as a {\em constrained quadratic optimization problem}. Some of the typical constraints are given by:
	\begin{itemize}
		\item[\bf(i)] \textit{Box constrained set}: $\Omega=\text{Box}[l,u]:=\big\{x\in\R^n\;\big|\;l\le x_i\le u,\;i=1,\ldots,n\big\}$.
		\item[\bf(ii)] \textit{Half-space}: $\Omega=\big\{x\in\R^n\;\big|\;\langle a,x\rangle\le\alpha\big\}$, where $a\in\R^n\setminus\{0\}$ and $\alpha\in\R$.
		\item[\bf(iii)] \textit{Affine set}: $\Omega=\big\{x\in \R^n\;\big|\;Ax=b\big\}$, where $A$ is an $m\times n$ matrix and $b\in\R^m$.
	\end{itemize}
\end{Example}

\begin{Remark}[\bf subproblems for other methods] \rm Quadratic composite optimization problems of type \eqref{QP} not only cover optimization models in machine learning, statistics, and other applied areas, but also arise as {\em subproblems} for some efficient algorithms including {\em sequential quadratic programming methods} (SQP) \cite{Bonnans,Solo14}, {\em augmented Lagrangian methods} \cite{hangborisebrahim,He69,lsk,Po69,Roc74,roc}, {\em proximal Newton methods} \cite{lss,myzz}, etc.
\end{Remark}

To develop now a globally convergent damped Newton method for solving quadratic composite optimization problems of type \eqref{QP}, we employ the basic machinery of variational analysis, which allows us to reduce \eqref{QP} to unconstrained problems with ${\cal C}^{1,1}$ objectives. Following \cite{Rockafellar98}, recall the corresponding notions used in our subsequent developments.

\begin{Definition}[\bf Moreau envelopes and proximal mappings]\label{def:moreau} Given an extended-real-valued, proper, l.s.c.\ function $\varphi\colon\R^n\to\oR$ and given a parameter value $\gamma>0$, the {\sc Moreau envelope} $e_\gamma\varphi$ and the {\sc proximal mapping} $\textit{\rm Prox}_{\gamma\varphi}$ are defined by
	\begin{equation}\label{Moreau}
	e_\gamma\varphi(x):=\inf\left\{\varphi(y)+\frac{1}{2\gamma}\|y-x\|^2\;\Big|\;y\in\R^n\right\},
	\end{equation}
	\begin{equation}\label{Prox}
	\textit{\rm Prox}_{\gamma\varphi}(x):={\rm argmin}\left\{\varphi(y)+\frac{1}{2\gamma}\|y-x\|^2\;\Big|\;y\in\R^n\right\}.
	\end{equation}
	If $\gamma=1$, we use the notation $e\varphi(x)$ and $\text{\rm Prox}_{\varphi}(x)$ in \eqref{Moreau} and \eqref{Prox}, respectively.
\end{Definition}

Both Moreau envelopes and proximal mappings have been well recognized in variational analysis and optimization as efficient tools of regularization and approximation of nonsmooth functions. Prior to
establishing the main convergence results of this section, we present several lemmas of their own interest. The first one, taken from \cite[Proposition~12.30]{Bauschke}, lists those properties of Moreau
envelopes and proximal mappings for convex extended-real-valued functions that are needed to derive the main results obtained below.

\begin{Lemma}[\bf Moreau envelopes and proximal mappings for convex functions]\label{rela} Let $\varphi\colon\R^n\to\oR$ be a proper, l.s.c., convex function. Then the following assertions hold for all $\gamma>0$:
	\begin{itemize}
		\item[\bf(i)] The Moreau envelope $e_\gamma\varphi$ is of class of continuously differentiable functions, and its gradient is Lipschitz continuous with modulus $1/\gamma$ on $\R^n$.
		\item[\bf(ii)] The proximal mapping $\text{\rm Prox}_{\gamma\ph}$ is single-valued, monotone, and nonexpansive, i.e., it is Lipschitz continuous with modulus $1$ on $\R^n$.
		\item[\bf(iii)] The gradient of $e_\gamma\ph$ is calculated by
		\begin{equation}\label{gradientMoreau}
		\nabla e_\gamma\varphi(x)=\frac{1}{\gamma}\Big(x-\text{\rm Prox}_{\gamma\varphi}(x)\Big)=\big(\gamma I+(\partial\varphi)^{-1}\big)^{-1}(x)\;\mbox{ for all }\;x\in \R^n.
		\end{equation}
	\end{itemize}
\end{Lemma}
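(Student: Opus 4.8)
The plan is to derive all three assertions from the single optimality condition characterizing the proximal point, together with the monotonicity of the convex subdifferential $\partial\varphi$. Fix $\gamma>0$ and $x\in\R^n$. Since $\varphi$ is proper, l.s.c., and convex, the objective $y\mapsto\varphi(y)+\frac{1}{2\gamma}\|y-x\|^2$ in \eqref{Prox} is proper, l.s.c., and strongly convex with modulus $1/\gamma$; hence it is coercive and admits a unique minimizer. This gives single-valuedness of $\text{Prox}_{\gamma\varphi}$ at once. Writing $p:=\text{Prox}_{\gamma\varphi}(x)$, the Fermat rule applied to the (convex) inner problem yields $0\in\partial\varphi(p)+\frac{1}{\gamma}(p-x)$, equivalently $\frac{1}{\gamma}(x-p)\in\partial\varphi(p)$. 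Rearranging as $x\in(I+\gamma\partial\varphi)(p)$ shows that $\text{Prox}_{\gamma\varphi}=(I+\gamma\partial\varphi)^{-1}$ is the resolvent of $\gamma\partial\varphi$.

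Next I would establish the firm nonexpansiveness of the proximal mapping, from which part (ii) follows. Let $p=\text{Prox}_{\gamma\varphi}(x)$ and $q=\text{Prox}_{\gamma\varphi}(y)$, so that $\frac{1}{\gamma}(x-p)\in\partial\varphi(p)$ and $\frac{1}{\gamma}(y-q)\in\partial\varphi(q)$. Monotonicity of $\partial\varphi$ gives $\langle(x-p)-(y-q),p-q\rangle\ge 0$, i.e.\ $\langle x-y,p-q\rangle\ge\|p-q\|^2$. Cauchy--Schwarz then yields $\|p-q\|\le\|x-y\|$ (nonexpansiveness), while $\langle x-y,p-q\rangle\ge 0$ gives monotonicity; this proves (ii). The second identity in \eqref{gradientMoreau} also drops out here: if $v:=\frac{1}{\gamma}(x-p)$, then $p\in(\partial\varphi)^{-1}(v)$ and $x=p+\gamma v\in\big(\gamma I+(\partial\varphi)^{-1}\big)(v)$, whence $v=\big(\gamma I+(\partial\varphi)^{-1}\big)^{-1}(x)$.

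The main work is part (iii)'s differentiability together with (i). I would first record the elementary majorization: for every $z\in\R^n$, choosing the (generally suboptimal) point $p$ in \eqref{Moreau} gives $e_\gamma\varphi(z)\le\varphi(p)+\frac{1}{2\gamma}\|p-z\|^2$, with equality at $z=x$; expanding the square produces $e_\gamma\varphi(z)\le e_\gamma\varphi(x)+\langle v,z-x\rangle+\frac{1}{2\gamma}\|z-x\|^2$ with $v=\frac{1}{\gamma}(x-p)$. Since $e_\gamma\varphi$ is an infimal convolution of convex functions it is convex and finite-valued, so it possesses a nonempty subdifferential at $x$; any subgradient $u$ satisfies the reverse affine bound $e_\gamma\varphi(z)\ge e_\gamma\varphi(x)+\langle u,z-x\rangle$. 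Combining the two bounds forces $\langle u-v,z-x\rangle\le\frac{1}{2\gamma}\|z-x\|^2$ for all $z$, and testing along $z=x+t(u-v)$ as $t\downarrow 0$ yields $u=v$; hence $\partial e_\gamma\varphi(x)=\{v\}$, so $e_\gamma\varphi$ is differentiable with $\nabla e_\gamma\varphi(x)=\frac{1}{\gamma}(x-\text{Prox}_{\gamma\varphi}(x))$, which is the first equality in \eqref{gradientMoreau}. Finally, for the Lipschitz bound in (i) I would show that $I-\text{Prox}_{\gamma\varphi}$ is itself firmly nonexpansive (a short computation from $\langle x-y,p-q\rangle\ge\|p-q\|^2$ gives $\langle x-y,(x-p)-(y-q)\rangle\ge\|(x-p)-(y-q)\|^2$), hence nonexpansive, so that $\nabla e_\gamma\varphi=\frac{1}{\gamma}(I-\text{Prox}_{\gamma\varphi})$ is Lipschitz with modulus $1/\gamma$. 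I expect the delicate step to be the differentiability argument: producing the two-sided squeeze and ruling out a nonunique subgradient, since everything else is a direct consequence of the optimality condition and monotonicity.
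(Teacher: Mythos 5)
Your proof is correct. The paper itself does not prove this lemma at all: it is stated as ``taken from [Proposition~12.30]{Bauschke}'' (Bauschke--Combettes), so there is nothing in the text to compare against beyond that citation. What you have written is, in essence, the standard textbook derivation behind that citation: existence/uniqueness of the prox point from strong convexity, the resolvent identity $\text{\rm Prox}_{\gamma\varphi}=(I+\gamma\partial\varphi)^{-1}$ via the Fermat rule, firm nonexpansiveness of both $\text{\rm Prox}_{\gamma\varphi}$ and $I-\text{\rm Prox}_{\gamma\varphi}$ from monotonicity of $\partial\varphi$, and the two-sided squeeze forcing $\partial e_\gamma\varphi(x)$ to be the singleton $\{\frac{1}{\gamma}(x-\text{\rm Prox}_{\gamma\varphi}(x))\}$. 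All the steps check out; the squeeze argument correctly yields $u=v$, and the Lipschitz bound on the gradient follows from the nonexpansiveness of $I-\text{\rm Prox}_{\gamma\varphi}$ exactly as you say. Two very small points you may wish to make explicit: (1) in the identity $v=\bigl(\gamma I+(\partial\varphi)^{-1}\bigr)^{-1}(x)$ you only exhibit $v$ as a member of the right-hand set, so you should add one line noting that any $v'$ in that set produces a point $p'$ with $\frac{1}{\gamma}(x-p')\in\partial\varphi(p')$, whence $p'=p$ and $v'=v$ by the uniqueness of the proximal point already established; (2) passing from ``$\partial e_\gamma\varphi(x)$ is a singleton for every $x$'' to differentiability uses the standard fact for finite convex functions on $\R^n$ (e.g.\ Rockafellar, Theorem~25.1), which deserves a citation rather than being treated as automatic. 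Neither affects the validity of the argument.
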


The results of Lemma~\ref{rela} allow us to pass from nonsmooth convex optimization problems of type \eqref{QP} with extended-valued objectives (i.e., including constraints) to an unconstrained
${\cal C}^{1,1}$ problem given in form \eqref{opproblem}. Note that such an approach has been used in \cite{BorisKhanhPhat,BorisEbrahim} to design locally convergent pure Newton algorithms for optimization problems and subgradient inclusions associated with prox-regular functions \cite{Rockafellar98}. However, now we go further from the numerical viewpoint. Exploiting the quadratic composite structure of problems \eqref{QP} and their specifications leads us to the design and justification of a new {\em globally} convergent algorithm with {\em constructive} calculations of its parameters via the problem data.\vspace*{0.05in}

To proceed, let $\gamma>0$ be such that the matrix $I-\gamma A$ is positive-definite. Denoting $Q:=(I-\gamma A)^{-1}$, $c:= \gamma Qb$, and $P:=Q-I$, define the unconstrained optimization problem by
\begin{eqnarray}\label{FBOQPptimization}
\text{minimize}&\text{}&\psi(u):=\frac{1}{2}\langle Pu,u\rangle+\langle c,u\rangle+\gamma e_\gamma g(u)\;\mbox{ subject to }\;u\in\R^n.
\end{eqnarray}

The following lemma reveals some important properties of the optimization problem \eqref{FBOQPptimization}.

\begin{Lemma}[\bf quadratic composite problems via proximal mappings]\label{psiform} Let $\psi$ be given in \eqref{FBOQPptimization}. Then $\psi$ is a continuously differentiable function represented by
	\begin{equation}\label{psiexpress}
	\psi(u):=\frac{1}{2}\langle Pu,u\rangle+\langle c,u\rangle+\gamma g\big(\text{\rm Prox}_{\gamma g}(u)\big)+\frac{1}{2}\|u-\text{\rm Prox}_{\gamma g}(u)\|^2.
	\end{equation}
	Moreover, the mapping $\nabla\psi$ is Lipschitz continuous on $\R^n$ with modulus $\ell:=1+\|Q\|$, and
	\begin{equation}\label{nablapsiexpress}
	\nabla\psi(u)=Qu-\text{\rm Prox}_{\gamma g}(u)+c.
	\end{equation}
	If in addition $A$ is positive-definite, then $\psi$ is strongly convex with modulus $\lambda_{\text{\rm min}}(P)>0$.
\end{Lemma}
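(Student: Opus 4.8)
The plan is to establish the four assertions in turn, drawing throughout on the properties of Moreau envelopes and proximal mappings collected in Lemma~\ref{rela}. First I would derive the explicit representation \eqref{psiexpress}. Since $g$ is proper, l.s.c., and convex, Lemma~\ref{rela}(ii) guarantees that $\text{\rm Prox}_{\gamma g}$ is single-valued, so the infimum defining $e_\gamma g(u)$ in \eqref{Moreau} is attained uniquely at $y=\text{\rm Prox}_{\gamma g}(u)$, which gives
$$
\gamma e_\gamma g(u)=\gamma g\big(\text{\rm Prox}_{\gamma g}(u)\big)+\tfrac{1}{2}\|u-\text{\rm Prox}_{\gamma g}(u)\|^2.
$$
Substituting this into the definition of $\psi$ in \eqref{FBOQPptimization} yields \eqref{psiexpress}, and the continuous differentiability of $\psi$ is immediate from the smoothness of the quadratic-plus-linear part together with Lemma~\ref{rela}(i).

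Next I would compute $\nabla\psi$. Since $A$ is symmetric, so is $Q=(I-\gamma A)^{-1}$, hence $P=Q-I$ is symmetric and the gradient of the quadratic-plus-linear part is $Pu+c$. Applying the gradient formula \eqref{gradientMoreau} of Lemma~\ref{rela}(iii) to the term $\gamma e_\gamma g$ gives $\nabla(\gamma e_\gamma g)(u)=u-\text{\rm Prox}_{\gamma g}(u)$, and adding the two contributions produces
$$
\nabla\psi(u)=(P+I)u-\text{\rm Prox}_{\gamma g}(u)+c=Qu-\text{\rm Prox}_{\gamma g}(u)+c,
$$
which is \eqref{nablapsiexpress}. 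The Lipschitz estimate then follows directly: for any $u,v\in\R^n$, applying the triangle inequality to \eqref{nablapsiexpress} and invoking the nonexpansiveness of $\text{\rm Prox}_{\gamma g}$ from Lemma~\ref{rela}(ii) gives $\|\nabla\psi(u)-\nabla\psi(v)\|\le\|Q\|\,\|u-v\|+\|u-v\|=(1+\|Q\|)\|u-v\|$, so $\ell=1+\|Q\|$.

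Finally, for the strong convexity claim under positive-definiteness of $A$, the first task is to verify $\lambda_{\text{\rm min}}(P)>0$. Diagonalizing the symmetric matrix $A$ with eigenvalues $\lambda_i>0$, the assumed choice of $\gamma$ forces $0<1-\gamma\lambda_i$, so $Q$ has eigenvalues $(1-\gamma\lambda_i)^{-1}>1$ and $P=Q-I$ has eigenvalues $\gamma\lambda_i/(1-\gamma\lambda_i)>0$; thus $P$ is positive-definite. It then remains to combine the strong convexity of the quadratic term $\tfrac{1}{2}\langle Pu,u\rangle$ with modulus $\lambda_{\text{\rm min}}(P)$ with the convexity of the remaining terms: the linear term is convex, and $e_\gamma g$ is convex because its gradient $\tfrac{1}{\gamma}\big(u-\text{\rm Prox}_{\gamma g}(u)\big)$ is monotone, which is a one-line consequence of the nonexpansiveness of $\text{\rm Prox}_{\gamma g}$ together with the Cauchy--Schwarz inequality. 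Writing $\psi(u)-\tfrac{\lambda_{\text{\rm min}}(P)}{2}\|u\|^2$ as a sum of convex functions then shows that $\psi$ is strongly convex with the claimed modulus. I expect no serious obstacle in this argument; the only point requiring care is the spectral bookkeeping that identifies $\lambda_{\text{\rm min}}(P)$ as the exact strong convexity modulus and confirms its positivity from the positive-definiteness of $A$ and the admissible range of $\gamma$.
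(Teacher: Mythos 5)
Your proposal is correct and follows essentially the same route as the paper's proof: evaluating the Moreau envelope at its proximal minimizer for \eqref{psiexpress}, applying the gradient formula \eqref{gradientMoreau} of Lemma~\ref{rela} to get \eqref{nablapsiexpress}, bounding the Lipschitz constant via nonexpansiveness of $\text{\rm Prox}_{\gamma g}$, and deducing positive-definiteness of $P$ from that of $A$ and $I-\gamma A$. You simply make explicit several steps the paper leaves as ``readily follows'' (the spectral identification of the eigenvalues of $P$ and the convexity of $e_\gamma g$), which is fine.
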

\begin{proof} Due to the convexity of $g$ and Lemma~\ref{rela}, the Moreau envelope $e_\gamma g$ is continuously differentiable and the proximal mapping $\text{\rm Prox}_{\gamma g}$ is nonexpansive on $\R^n$. Thus the function $\psi$ is continuously differentiable as well. The representations in \eqref{psiexpress} and \eqref{nablapsiexpress} follow from the definition of $\psi$ and formula \eqref{gradientMoreau}. Furthermore, for any $u_1,u_2\in\R^n$ we have
	$$
	\|\nabla\psi(u_1)-\nabla\psi(u_2)\|=\|Qu_1-Qu_2-\text{\rm Prox}_{\gamma g}(u_1)+\text{\rm Prox}_{\gamma g}(u_2)\|\le(1+\|Q\|)\|u_1-u_2\|=\ell\|u_1-u_2\|,
	$$
	which justifies the global Lipschitz continuity of $\psi$ on $\R^n$ with the uniform constant $\ell$ defined above. Suppose further that $A$ is positive-definite. Combining this with the
	positive-definiteness of $I-~\gamma A$ readily yields the positive-definiteness of $P$. Thus $\psi$ in \eqref{FBOQPptimization} is strongly convex on $\R^n$ with modulus $\lambda_{\text{\rm min}}(P)>~0$.
\end{proof}

Next we establishes the relationship between the optimization problems \eqref{QP} and \eqref{FBOQPptimization}.

\begin{Lemma}[\bf reduction of quadratic composite problems to ${\cal C}^{1,1}$ optimization]\label{characterizeQP} Consider the optimization problems \eqref{QP} and \eqref{FBOQPptimization}.
	The following are equivalent:
	\begin{itemize}
		\item[\bf(i)] The vector $\bar{x}$ is an optimal solution to \eqref{QP}.
		\item[\bf(ii)] The vector $\bar{x}=Q\bar{u}+c$, where $\bar{u}$ is an optimal solution to \eqref{FBOQPptimization}.
	\end{itemize}
\end{Lemma}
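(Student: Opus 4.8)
The plan is to reduce both problems to their first-order optimality conditions and then show that these conditions are carried into one another by the affine change of variables $x=Qu+c$. Since $u\mapsto Qu+c$ is a bijection on $\R^n$ (as $Q=(I-\gamma A)^{-1}$ is invertible), it suffices to prove that $\bar u$ is a minimizer of $\psi$ in \eqref{FBOQPptimization} if and only if $\bar x:=Q\bar u+c$ is a minimizer of $\varphi$ in \eqref{QP}; the two stated implications then follow at once.

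First I would record the optimality characterizations. As $f(x)=\frac{1}{2}\langle Ax,x\rangle+\langle b,x\rangle+\alpha$ is convex (because $A$ is positive-semidefinite) and smooth, and $g$ is convex, the convex subdifferential sum rule gives that $\bar x$ solves \eqref{QP} if and only if
$$
0\in\nabla f(\bar x)+\partial g(\bar x)=A\bar x+b+\partial g(\bar x).
$$
On the other side, the matrix $P=Q-I$ is positive-semidefinite: its eigenvalues are $\gamma\lambda_i/(1-\gamma\lambda_i)\ge 0$, where $\lambda_i\ge 0$ are the eigenvalues of $A$ and $1-\gamma\lambda_i>0$ by the positive-definiteness of $I-\gamma A$. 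Hence $\psi$ is convex, and being ${\cal C}^1$ by Lemma~\ref{psiform}, its minimizers $\bar u$ are precisely the solutions of $\nabla\psi(\bar u)=0$. By the gradient formula \eqref{nablapsiexpress} this means $Q\bar u-\text{\rm Prox}_{\gamma g}(\bar u)+c=0$, i.e. $\text{\rm Prox}_{\gamma g}(\bar u)=Q\bar u+c=\bar x$.

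Next I would invoke the resolvent characterization of the proximal mapping for convex $g$, namely $\bar x=\text{\rm Prox}_{\gamma g}(\bar u)$ if and only if $\bar u-\bar x\in\gamma\,\partial g(\bar x)$, and then convert this inclusion into the optimality condition for \eqref{QP} by a short matrix computation. From $\bar x=Q\bar u+c$ I obtain $\bar u=(I-\gamma A)(\bar x-c)$, and combining this with the identity $(I-\gamma A)c=\gamma b$ (which is merely $c=\gamma Qb$ rewritten) yields
$$
\bar u-\bar x=(I-\gamma A)(\bar x-c)-\bar x=-\gamma A\bar x-\gamma b=-\gamma(A\bar x+b).
$$
Dividing by $\gamma$, the inclusion $\bar u-\bar x\in\gamma\,\partial g(\bar x)$ becomes $-(A\bar x+b)\in\partial g(\bar x)$, which is exactly $0\in A\bar x+b+\partial g(\bar x)$.

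Since every step above is an equivalence and $u\mapsto Qu+c$ is bijective, this chain establishes the desired two-way equivalence between (i) and (ii). I do not expect a genuine obstacle here: $\psi$ was engineered precisely as the forward-backward/Moreau-envelope surrogate of $\varphi$, so the substance of the argument is just verifying that the gradient formula \eqref{nablapsiexpress} reproduces the subgradient inclusion of the original problem. The only place demanding a little care is the bookkeeping of the matrix algebra relating $Q$, $P$, $c$ to $A$, $b$; once the identity $(I-\gamma A)c=\gamma b$ is in hand, the computation collapses cleanly.
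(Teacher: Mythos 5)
Your proof is correct and follows essentially the same route as the paper: the paper characterizes solutions of \eqref{QP} by the fixed-point equation $x-\text{\rm Prox}_{\gamma g}(x-\gamma(Ax+b))=0$ (citing Bauschke--Combettes), substitutes $u=(I-\gamma A)x-\gamma b$, and matches this with $\nabla\psi(\bar u)=0$ via \eqref{nablapsiexpress}, exactly the change of variables and gradient identity you use. Your only deviation is cosmetic --- you unpack the cited fixed-point characterization into the subdifferential sum rule plus the resolvent inclusion $\bar u-\bar x\in\gamma\,\partial g(\bar x)$, and you make explicit the eigenvalue argument for the positive-semidefiniteness of $P$ that the paper states without detail.
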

\begin{proof} Using \cite[Theorem~26.2]{Bauschke} and the expression $\nabla f(x):=Ax+b$ for all $x\in\R^n$ tells us that the optimal solution to \eqref{QP} is fully characterized by the equation
	\begin{equation}\label{FermatQP}
	x-\text{\rm Prox}_{\gamma g}\big(x-\gamma(Ax+b)\big)=0.
	\end{equation}
	For each $x\in\R^n$ denote $u:=x-\gamma(Ax+b)=(I-\gamma A)x-\gamma b$ and observe by the positive-definiteness of the matrix $I-\gamma A$ that \eqref{FermatQP} is equivalent to
	\begin{equation}
	\begin{cases}
	Qu-\text{\rm Prox}_{\gamma g}(u)+c=0,\\
	x=Qu+c,
	\end{cases}
	\end{equation}
	where $Q:=(I-\gamma A)^{-1}$ and $c:=\gamma Qb$. The positive-definiteness of $I-\gamma A$ and the positive-semidefiniteness of $A$ imply that $P=Q-I$ is positive-semidefinite. Furthermore, the
	convexity
	of $g$ and Lemma~\ref{psiform} ensure that $e_\gamma g$ is continuously differentiable on $\R^n$, and that $\bar{u}$ is a solution to \eqref{FBOQPptimization} if and only if we have
	$$
	0=\nabla\psi(\bar{u})=P\bar{u}+c+\gamma\nabla e_\gamma g(\bar{u})=Q\bar{u}-\text{\rm Prox}_{\gamma g}(\bar{u})+c.
	$$
	This verifies the equivalence between (i) and (ii) as stated in the lemma.
\end{proof}

The last lemma of this section provides the representation of the second-order subdifferential of the cost function $\psi$ in the reduced problem \eqref{FBOQPptimization} via the second-order subdifferential of
the
given regularizer $g$ in the original one \eqref{QP}.

\begin{Lemma}[\bf second-order subdifferentials of reduced cost
	functions]\label{calculatepsi} Let $\psi\colon\R^n\to\R$ be taken
	from \eqref{FBOQPptimization}. Then for each $u\in\R^n$ and
	$w\in\R^n$ we have the relationship $$
	z\in\partial^2\psi(u)(w)\iff\frac{1}{\gamma}(z-Pw)\in\partial^2g\Big(\text{\rm
		Prox}_{\gamma g}(u),\frac{1}{\gamma}\big(u-\text{\rm Prox}_{\gamma
		g}(u)\big)\Big)\big(Qw-z\big). $$ \end{Lemma}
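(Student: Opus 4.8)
The plan is to exploit that $\psi$ is of class $\mathcal{C}^{1,1}$ (Lemma~\ref{psiform}), so that by definition \eqref{2nd} the second-order subdifferential coincides with the coderivative of the gradient, $\partial^2\psi(u)(w)=D^*(\nabla\psi)(u)(w)$, and then to unfold the explicit formula $\nabla\psi(u)=Qu+c-\mathrm{Prox}_{\gamma g}(u)$ from \eqref{nablapsiexpress} through coderivative calculus. First I would split $\nabla\psi$ into its affine part $u\mapsto Qu+c$, whose coderivative contributes the term $Q^*w=Qw$ (here $Q$ is symmetric because $A$, and hence $I-\gamma A$ and its inverse $Q$, are symmetric), and the Lipschitzian part $-\mathrm{Prox}_{\gamma g}$. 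Applying the exact coderivative sum rule for a smooth-plus-Lipschitz mapping together with the elementary negation rule $D^*(-F)(u)(w)=D^*F(u)(-w)$ (both following from linear changes of variables in the graph, as in \cite{Mordukhovich06}) yields
\[
\partial^2\psi(u)(w)=Qw+D^*\mathrm{Prox}_{\gamma g}(u)(-w).
\]

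The heart of the argument is the computation of $D^*\mathrm{Prox}_{\gamma g}$ in terms of $\partial^2 g$. The key identity is the resolvent representation $\mathrm{Prox}_{\gamma g}=(I+\gamma\partial g)^{-1}$, valid for convex $g$; equivalently, writing $p:=\mathrm{Prox}_{\gamma g}(u)$ and $v:=\tfrac1\gamma(u-p)$, one has $v\in\partial g(p)$ and $u=p+\gamma v$, so that $(p,v)$ is precisely the pair at which $\partial^2 g$ is evaluated in the statement. I would then apply the inverse-mapping rule for coderivatives, namely $\zeta\in D^*(F^{-1})(u,p)(y)\iff -y\in D^*F(p,u)(-\zeta)$ (obtained by flipping the graph), to $F:=I+\gamma\partial g$. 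Since $F$ is the sum of the identity (smooth, Jacobian $I$) and $\gamma\partial g$, the sum rule and the scaling rule give $D^*F(p,u)(d)=d+\gamma\,\partial^2 g(p,v)(d)$. Combining these facts produces the intermediate equivalence
\[
\zeta\in D^*\mathrm{Prox}_{\gamma g}(u)(y)\iff \tfrac1\gamma(\zeta-y)\in\partial^2 g(p,v)(-\zeta).
\]

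Finally I would substitute $y=-w$ and $\zeta=z-Qw$ (so that $z=Qw+\zeta$ ranges exactly over $\partial^2\psi(u)(w)$ by the first display) into this equivalence and simplify using $P=Q-I$: the left-hand output becomes $\tfrac1\gamma(\zeta+w)=\tfrac1\gamma\big(z-(Q-I)w\big)=\tfrac1\gamma(z-Pw)$, while the right-hand input becomes $-\zeta=Qw-z$. This is precisely the asserted relationship $\tfrac1\gamma(z-Pw)\in\partial^2 g(p,v)(Qw-z)$, establishing the desired biconditional.

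I expect the main obstacle to be the computation of $D^*\mathrm{Prox}_{\gamma g}$: one must keep the signs and the $\gamma$-factors straight through the inverse rule and the scaling rule, and justify that the sum rule holds as an \emph{equality} (guaranteed here because the identity summand is smooth, so no inclusion is lost). The positive homogeneity of the coderivative underlies the scaling identity $D^*(\gamma\partial g)=\gamma\,D^*(\partial g)$, so I would keep careful track of it at that step. By contrast, the remaining ingredients—the $\mathcal{C}^{1,1}$ reduction, the symmetry of $Q$, the negation rule, and the resolvent identity (which also follows from \eqref{gradientMoreau} in Lemma~\ref{rela})—are routine once the resolvent coderivative is in hand.
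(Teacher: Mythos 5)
Your argument is correct, and it follows the same skeleton as the paper's proof: split off the quadratic/affine part, then reduce the remaining coderivative to $\partial^2 g$ through the resolvent structure of the proximal mapping. The difference lies in where the splitting happens and in what is cited versus derived. The paper decomposes $\psi$ itself as a quadratic plus $\gamma e_\gamma g$, applies the second-order subdifferential sum rule of \cite[Proposition~1.121]{Mordukhovich06} to obtain $\partial^2\psi(u)(w)=Pw+\gamma\,\partial^2 e_\gamma g(u,\cdot)(w)$, and then invokes \cite[Lemma~6.4]{BorisKhanhPhat} as a black box to convert $\partial^2 e_\gamma g$ into $\partial^2 g$; the argument $w-z+Pw$ appearing there equals $Qw-z$ since $Q=P+I$. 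You instead decompose $\nabla\psi$ via \eqref{nablapsiexpress} into the affine map $u\mapsto Qu+c$ minus $\text{\rm Prox}_{\gamma g}$ and derive the conversion yourself from the resolvent identity $\text{\rm Prox}_{\gamma g}=(I+\gamma\partial g)^{-1}$ (which indeed follows from \eqref{gradientMoreau}) together with the inverse, scaling, and smooth-summand sum rules for coderivatives. Your bookkeeping is right: the intermediate equivalence $\zeta\in D^*\text{\rm Prox}_{\gamma g}(u)(y)\iff\frac{1}{\gamma}(\zeta-y)\in\partial^2 g(p,v)(-\zeta)$ checks out, and the substitution $y=-w$, $\zeta=z-Qw$ with $P=Q-I$ reproduces the stated formula exactly. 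The only points worth flagging are the symmetry of $A$ (needed for $Q^*=Q$; it is implicit throughout the paper, e.g.\ in $\nabla f(x)=Ax+b$ and $\lambda_{\min}(P)$) and the exactness of the coderivative sum rule when one summand is smooth and single-valued, which holds because the graph of the sum is the image of $\gph\partial g$ under a diffeomorphism. What your route buys is self-containedness: your resolvent computation is essentially a proof of the cited \cite[Lemma~6.4]{BorisKhanhPhat}, at the cost of a slightly longer chain of elementary graph manipulations.
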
 \begin{proof} Using
	the second-order subdifferential sum rule from
	\cite[Proposition~1.121]{Mordukhovich06} gives us $$
	\partial^2\psi(u)(w)=Pw+\gamma\partial^2e_\gamma
	g\Big(u,\frac{1}{\gamma}\big(\nabla\psi(u)-Pu-c\big)\Big)(w). $$
	This we have that $z\in\partial^2\psi(u)(w)$ if and only if $$
	\frac{1}{\gamma}(z-Pw)\in\partial^2 e_\gamma
	g\Big(u,\frac{1}{\gamma}\big(\nabla\psi(u)-Pu-c\big)\Big)(w). $$ Due
	to \cite[Lemma~6.4]{BorisKhanhPhat}, the latter is equivalent to
	\begin{equation}\label{secondofg}
	\frac{1}{\gamma}(z-Pw)\in\partial^2g\Big(u-\nabla\psi(u)+Pu+c,\frac{1}{\gamma}\big(\nabla\psi(u)-Pu-c\big)\Big)(w-z+Pw).
	\end{equation} Furthermore, we have the equalities
	\begin{equation}\label{component1}
 {u-\nabla\psi(u)+Pu+c=u-\gamma\nabla e_\gamma g(u)=\text{\rm
		Prox}_{\gamma g}(u),} \end{equation}
	\begin{equation}\label{component2}
	\frac{1}{\gamma}(\nabla\psi(u)-Pu-c)=\frac{1}{\gamma}\big(u-\text{\rm
		Prox}_{\gamma g}(u)\big). \end{equation} Combining \eqref{secondofg}
	with \eqref{component1} and \eqref{component2} completes the proof
	of the lemma.\end{proof}

Now we are in a position to design the aforementioned generalized damped Newton-type algorithm to solve problems \eqref{QP} of quadratic composite optimization.

\begin{Algorithm}[\bf generalized damped Newton algorithm for problems of quadratic composite optimization]\label{LSQP}\rm \text{}
	
	\medskip\noindent
	\textbf{Input:} $A\in\R^{n\times n}$, $b\in\R^n$, $g$, $\sigma\in\left(0,\frac{1}{2}\right)$, and $\beta\in(0,1)$. Do the following:\\[1ex]
	{\bf Step~0:}  Choose $\gamma>0$ such that $I-\gamma A$ is positive-definite, calculate $Q:=(I-\gamma A)^{-1}$, $c:=\gamma Qb$, $P:=Q-I$, define the function $\psi$ as in \eqref{psiexpress},
	choose a starting point $u^0\in\R^n$, and set $k:=0$.\\[1ex]
	{\bf Step~1:} If $\nabla\psi(u^k)=0$, then stop. Otherwise, set $v^k:=\text{\rm Prox}_\gamma g(u^k)$.\\[1ex]
	{\bf Step~2}: Find $d^k\in\R^n$ such that
	\begin{equation}\label{prox-dir}
	\frac{1}{\gamma}\big(-\nabla\psi(u^k)-Pd^k\big)\in\partial^2g\Big(v^k,\frac{1}{\gamma}(u^k-v^k)\Big)\big(Qd^k+\nabla\psi(u^k)\big).
	\end{equation} {\bf Step~3:} Set $\tau_k=1$. Until \textit{ Armijo's
		inequality} $$ \psi(u^k+\tau_kd^k)\le
	\psi(u^k)+\sigma\tau_k\langle\nabla\psi(u^k),d^k\rangle $$
	is satisfied, set $\tau_k:=\beta\tau_k$.\\[1ex]
	{\bf Step~4:} Compute $u^{k+1}$ by
	\begin{equation*}
	u^{k+1}:=u^k+\tau_kd^k,\quad k=0,1,\ldots.
	\end{equation*}
	{\bf Step 5:} Increase $k$ by $1$ and go to Step~1. \\[1ex]
	\textbf{Output}: $x^k:=Qu^k +c$.
\end{Algorithm}

Note that the definitions of the second-order subdifferential \eqref{2nd} and the limiting coderivative \eqref{lim-cod} allow us to rewrite the implicit inclusion \eqref{prox-dir} for $d^k$ in the
{\em explicit form}
\begin{equation}\label{prox-dir1}
\Big(\frac{1}{\gamma}\big(-\nabla\psi(u^k)- Pd^k\big),-Qd^k-\nabla\psi(x^k)\Big)\in N_{\text{gph}\,\partial g}\Big(v^k,\frac{1}{\gamma}(u^k-v^k)\Big).
\end{equation}
Explicit expressions for the sequences $\{v^k\}$ and $\{d^k\}$ in Algorithm \ref{LSQP} depend on given structures of the regularizers $g$, which are efficiently specified in applied models of machine learning and statistics; see, e.g., the above discussions and those presented in Section~\ref{Lassosec}.

\begin{Remark}[\bf stopping criterion]\label{rem:stop} \rm Note that $\bar{x}$ is a solution to \eqref{QP} if and only if this point satisfies the stationary equation \eqref{FermatQP}. In order to approximate the solution $\bar{x}$, we choose the {\em termination/stopping criterion}
	\begin{equation}\label{stop}
	\big\|x-\text{\rm Prox}_{\gamma g}\big(x-\gamma(Ax+b)\big)\big\|\le\epsilon
	\end{equation}
	with a given {\em tolerance} parameter $\epsilon>0$. The stopping criterion \eqref{stop} is clearly equivalent to the condition  $\|\nabla\psi(u)\|\le\epsilon$, where $u:=x-\gamma(Ax+b)=Q^{-1}(x-c)$, and where $\psi$ is defined in \eqref{psiexpress}. Therefore, in practice the stopping criterion of Step~2 of Algorithm~\ref{LSQP} can be replaced by the simpler one $\|\nabla\psi(u^k)\|\le~\epsilon$.
\end{Remark}

To proceed with establishing conditions for global convergence of Algorithm~\ref{LSQP}, we need to employ yet another notion of generalized second-order differentiability taken from \cite[Chapter~13]{Rockafellar98}. First recall that a mapping $h\colon\R^n\to\R^m$ is {\em semidifferentiable} at $\bar{x}$ if there exists a continuous and positively homogeneous operator $H\colon\R^n\to\R^m$ such that
$$
h(x)=h(\bar{x})+H\big(x-\bar{x}\big)+o(\|x-\bar{x}\|)\quad\text{for all }\;x\;\text{ near }\;\bar{x}.
$$
Given $\ph\colon\R^n\to\oR$ with $\ox\in\dom\ph$, consider the family of second-order finite differences
\begin{equation*}
\Delta^2_\tau\varphi(\bar{x},v)(u):=\frac{\varphi(\bar{x}+\tau u)-\varphi(\bar{x})-\tau\langle v,u\rangle}{\frac{1}{2}\tau^2}
\end{equation*}
and define the {\em second subderivative} of $\varphi$ at $\ox$ for $v\in\R^n$ and $w\in\R^n$ by
\begin{equation*}
d^2\varphi(\ox,v)(w):=\liminf_{\tau\downarrow 0\atop u\to w}\Delta^2_\tau\varphi(\ox,v)(u).
\end{equation*}
Then $\ph$ is said to be {\em twice epi-differentiable} at $\ox$ for $v$ if for every $w\in\R^n$ and every choice of $\tau_k\downarrow 0$ there exists a sequence $w^k\to w$ such that
\begin{equation*}
\frac{\varphi(\ox+\tau_k w^k)-\varphi(\ox)-\tau_k\langle v,w^k\rangle}{\frac{1}{2}\tau_k^2}\to d^2\varphi(\ox,v)(w)\;\mbox{ as }\;k\to\infty.
\end{equation*}
Twice epi-differentiability has been recognized as an important property in second-order variational analysis with numerous applications to optimization; see the aforementioned monograph by Rockafellar and Wets and the recent papers \cite{mms,mms1,ms} developing a systematic approach to verify epi-differentiability via {\em parabolic regularity}, which is a major second-order property of sets and extended-real-valued functions.\vspace*{0.05in}

The next theorem provides verifiable conditions on the matrix $A$ and the function $g$ to run the GDNM Algorithm~\ref{LSQP} for solving quadratic composite optimization problems \eqref{QP}.

\begin{Theorem}[\bf convergence rate of GDNM for problems of quadratic composite optimization]\label{solvingQP} Considering the optimization problem \eqref{QP}, suppose that $A$ is positive-definite. Then the following assertions hold:
	\begin{itemize}
		\item[\bf(i)] Algorithm~{\rm\ref{LSQP}} is well-defined, and the sequence of its iterates $\{u^k\}$ globally R-linearly converges  to some $\bar{u}$ as $k\to\infty$.
		\item[\bf(ii)] The vector $\bar{x}:=Q\bar{u}+c$ is a tilt-stable local minimizer of the cost function $\varphi$ in \eqref{QP}, and $\ox$ is the unique solution to \eqref{QP}.
	\end{itemize}
	Furthermore, the rate of convergence of $\{u^k\}$ is at least Q-superlinear if the mapping $\partial g$ is semismooth$^*$ at  {$(\ox,\bar{v})$ with $\bar{v}:=-A\ox-b$}, and if one of two following conditions fulfills:
	\begin{itemize}
		\item[\bf(a)] $\sigma\in\left(0,1/(2\ell\kappa)\right)$, where $\ell:=1+\|Q\|$ and $\kappa:=1/\lambda_{\text{\rm min}}(P)$.
		\item[\bf(b)]  {$g$ is twice epi-differentiable at $\ox$ for $\bar{v}$.}
	\end{itemize}
\end{Theorem}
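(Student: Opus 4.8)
The plan is to observe that Algorithm~\ref{LSQP}, when executed on problem \eqref{QP}, is \emph{identical} to Algorithm~\ref{LS} applied to the reduced $\mathcal{C}^{1,1}$ objective $\psi$ of \eqref{FBOQPptimization}, so that the entire convergence theory of Sections~\ref{sec:dampedC11} and \ref{sec:rate} transfers at once. Comparing the search-direction inclusion \eqref{prox-dir} with Lemma~\ref{calculatepsi} (taking $u=u^k$, $w=d^k$, and $z=-\nabla\psi(u^k)$), one sees that \eqref{prox-dir} is precisely the statement $-\nabla\psi(u^k)\in\partial^2\psi(u^k)(d^k)$, i.e.\ the generalized Newton inclusion \eqref{alC11} for $\psi$; the line search in Step~3 is Armijo's rule for $\psi$. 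Hence $\{u^k\}$ coincides with the sequence produced by Algorithm~\ref{LS} on $\psi$, and it suffices to check the hypotheses of Theorems~\ref{globalconver}, \ref{sufconver}, \ref{linearcon}, and \ref{superlinearLS} for $\psi$.

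First I would verify those hypotheses. By Lemma~\ref{psiform}, $\psi$ is $\mathcal{C}^{1,1}$ with $\nabla\psi$ Lipschitz of modulus $\ell=1+\|Q\|$, and, since $A$ is positive-definite, $\psi$ is strongly convex with modulus $\lambda_{\text{\rm min}}(P)>0$. Strong convexity of the $\mathcal{C}^{1,1}$ function $\psi$ makes $\nabla\psi$ strongly monotone, whence $\langle z,w\rangle\ge\lambda_{\text{\rm min}}(P)\|w\|^2$ for every $z\in\partial^2\psi(u)(w)$; thus $\partial^2\psi(u)$ is positive-definite at every $u$, so Assumption~\ref{PDofHessian} holds on any level set, and coercivity of $\psi$ gives Assumption~\ref{boundedoflevelset}. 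Theorem~\ref{sufconver} then delivers convergence of $\{u^k\}$ to some $\bar{u}$, a tilt-stable local minimizer of $\psi$, and Theorem~\ref{linearcon} upgrades this to at least R-linear convergence (in the finite-termination case $\nabla\psi(u^k)=0$ the solution is reached exactly), proving (i). For (ii), Lemma~\ref{characterizeQP} identifies $\bar{x}=Q\bar{u}+c$ as a solution of \eqref{QP}; positive-definiteness of $A$ makes $\varphi=f+g$ strongly convex, so this solution is unique, and by the Poliquin--Rockafellar characterization \cite[Theorem~1.3]{Poli} (with $0\in\partial\varphi(\bar{x})$ and $\partial^2\varphi(\bar{x})$ positive-definite) $\bar{x}$ is a tilt-stable local minimizer of $\varphi$.

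For the superlinear rate I would apply Theorem~\ref{superlinearLS} to $\psi$, whose standing requirement is semismoothness$^*$ of $\nabla\psi$ at $\bar{u}$. Writing $\nabla\psi(u)=Qu+c-\text{\rm Prox}_{\gamma g}(u)$ by \eqref{nablapsiexpress}, the affine term is smooth, so it remains to transfer semismoothness$^*$ to $\text{\rm Prox}_{\gamma g}$ at $\bar{u}$. Since $\text{\rm Prox}_{\gamma g}=(I+\gamma\partial g)^{-1}$ and semismoothness$^*$ is preserved under positive scaling, addition of the identity, and inversion, the assumed semismoothness$^*$ of $\partial g$ at the graph point $(\bar{x},\bar{v})$ --- which is the correct point because $\text{\rm Prox}_{\gamma g}(\bar{u})=\bar{x}$ and $\tfrac{1}{\gamma}(\bar{u}-\bar{x})=\bar{v}\in\partial g(\bar{x})$ with $\bar{u}=\bar{x}+\gamma\bar{v}$ --- propagates to $\text{\rm Prox}_{\gamma g}$, hence to $\nabla\psi$. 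Under (a), the modulus of tilt stability of $\psi$ at $\bar{u}$ equals $1/\lambda_{\text{\rm min}}(P)=\kappa$, so $\sigma\in(0,1/(2\ell\kappa))$ is exactly hypothesis (ii) of Theorem~\ref{superlinearLS}. Under (b), twice epi-differentiability of $g$ at $\bar{x}$ for $\bar{v}$ forces semidifferentiability of the resolvent $\text{\rm Prox}_{\gamma g}$ at $\bar{u}$, hence directional differentiability of $\nabla\psi$ there, which is hypothesis (i). Either way Theorem~\ref{superlinearLS} gives the Q-superlinear rate for $\{u^k\}$, $\{\psi(u^k)\}$, and $\{\nabla\psi(u^k)\}$.

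The hard part will be the two transfer principles of the last paragraph: showing rigorously that semismoothness$^*$ of $\partial g$ passes through the resolvent $(I+\gamma\partial g)^{-1}$ to $\text{\rm Prox}_{\gamma g}$, and that twice epi-differentiability of $g$ yields (semi)differentiability of the proximal mapping. Both demand careful manipulation of directional limiting coderivatives \eqref{dir-cod} and epigraphical second-order limits rather than routine estimation, and this is where the second-order calculus of \cite{BorisKhanhPhat} and \cite{Rockafellar98} must be deployed; everything else reduces to bookkeeping on top of the already-proved $\mathcal{C}^{1,1}$ theorems.
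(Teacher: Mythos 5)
Your proposal is correct and follows essentially the same route as the paper: reduce Algorithm~\ref{LSQP} to Algorithm~\ref{LS} applied to $\psi$ via Lemmas~\ref{psiform}--\ref{calculatepsi}, verify Assumptions~\ref{PDofHessian} and \ref{boundedoflevelset} from strong convexity, invoke Theorems~\ref{sufconver}, \ref{linearcon}, and \ref{superlinearLS}, and transfer semismoothness$^*$ (resp.\ directional differentiability) from $\partial g$ (resp.\ twice epi-differentiability of $g$) to $\nabla\psi$ through the proximal mapping. The two ``hard parts'' you flag are not re-proved in the paper either; they are discharged by citing \cite[Propositions~6 and Corollary~8]{fgh} together with \cite[Theorem~13.40]{Rockafellar98} (twice epi-differentiability $\Rightarrow$ proto-differentiability of $\partial g$ $\Rightarrow$ directional differentiability of $\text{\rm Prox}_{\gamma g}$), so your plan matches the published argument step for step.
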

\begin{proof} Lemma~\ref{psiform} and Lemma~\ref{calculatepsi} tell us that applying Algorithm~\ref{LSQP} to solve the quadratic composite optimization problem \eqref{QP} is equivalent to applying
	Algorithm~\ref{LS} to solving the ${\cal C}^{1,1}$ optimization problem \eqref{FBOQPptimization}. We split the proof of the theorem into the following claims:\\[1ex]
	{\bf Claim 1:} {\em The function $\psi$ from \eqref{FBOQPptimization} satisfies Assumptions~{\rm\ref{PDofHessian}} and {\rm\ref{boundedoflevelset}}.} Indeed, Lemma~\ref{psiform} ensures that $\psi$ is
	strongly convex with modulus $\lambda_{\text{\rm min}}(P)>0$, and thus Assumption~\ref{PDofHessian} holds due to \cite[Theorem~5.1]{ChieuChuongYaoYen}. Moreover, the strong convexity of $\psi$ clearly implies
	that for an arbitrary vector $u^0\in\R^n$ the level set
	$$
	\Omega:=\big\{u\in\R^n\;\big|\;\psi(u)\le\psi(u^0)\big\}
	$$
	is bounded, and so Assumption~\ref{boundedoflevelset} holds for the function $\psi$.\\[1ex]
	{\bf Claim 2:} {\em Both statements {\rm(i)} and {\rm(ii)} of the
		theorem are satisfied.} To proceed, we employ Claim~1 together with
	Theorems~\ref{sufconver} and \ref{linearcon} to conclude that
	 Algorithm~\ref{LSQP} is well-defined, and that the sequence of its
 {iterates $\{u^k\}$ globally R-linearly converges to some $\bar{u}$}
	as $k\to\infty$. Then Lemma~\ref{characterizeQP} tells us that the
	vector $\bar{x}=Q\bar{u}+c$ is a solution to \eqref{QP}. The
	uniqueness and tilt stability of $\bar{x}$ follow immediately from
	the strong
	convexity of $\varphi$.\\[1ex]
	{\bf Claim 3:} {\em The convergence rate of the sequence $\{u^k\}$ is at least Q-superlinear provided that $\partial g$ is semismooth$^*$ at all the points on its graph and that either one of the two conditions {\rm(a)} and {\rm(b)} is satisfied.} Indeed, suppose that {$\partial g$ is semismooth$^*$ at $(\bar{x},\bar{v})$. Then we deduce from \cite[Proposition~6]{fgh} that $\text{\rm Prox}_{\gamma g}$ is semismooth$^*$ at $\bar{x}-\gamma(A\bar{x}+b)$.  Thus we obtain that the mapping $\nabla\psi(u)=Qu-\text{\rm Prox}_{\gamma g}(u)+c$ is semismooth$^*$ at $\bar{u}$ by employing \cite[Proposition~3.6]{Helmut}.}
	
	Assume now that condition (a) is satisfied. Then Lemma~\ref{psiform} tells us that $\ell$ is a Lipschitz constant of $\nabla\psi$ around $\bar{u}$, and that $\bar{u}$ is a tilt-stable local minimizer of
	$\psi$ with modulus $\kappa$. Thus the claimed assertion follows in case (a) directly from Theorem~\ref{superlinearLS}.
	
	Assuming by (b) that {$g$ is twice epi-differentiable at $\bar{x}$ for $\bar{v}$, we deduce from \cite[Theorem~13.40]{Rockafellar98} that  the subgradient mapping $\partial g$ is proto-differentiable of at $(\bar{x},\bar{v})$.	Using \cite[Corollary~8]{fgh}, we conclude that $\text{\rm Prox}_{\gamma g}$ is directionally differentiable at $\bar{x}-\gamma(A\bar{x}+b)$, which yields in turn the directional differentiability of $\nabla\psi$ at $\bar{u}$.  Finally, Theorem~\ref{superlinearLS} allows us to conclude that the sequence $\{u^k\}$ Q-superlinearly converges to $\bar{u}$ as $k\to\infty$.} \end{proof}

It is highly desired to obtain a counterpart of Theorem~\ref{solvingQP} on global convergence of Algorithm~\ref{LSQP} under merely {\em positive-semidefiniteness} of the matrix $A$. However, we cannot do it at this stage of developments since the function $\psi$ from \eqref{FBOQPptimization} may not satisfy Assumption~\ref{PDofHessian}. A natural idea to overcome such a challenge is to {\em regularize} the original problem via approximating it by a sequence of well-behaved problems. Perhaps the simplest way to realize this idea is to employ the classical {\em Tikhonov regularization}. To this end, consider in the setting of Lemma~\ref{characterizeQP} the following family of optimization problem depending on the parameter $\epsilon>0$:
\begin{eqnarray}\label{FBregularization}
\text{minimize}&\text{}&\psi_\epsilon(u):=\frac{1}{2}\langle P_\epsilon u,u\rangle+\langle c,u\rangle+\gamma e_\gamma g(u)\;\mbox{ subject to }\;u\in\R^n,
\end{eqnarray}
where $P_\epsilon:=P+\epsilon I$. The next proposition discusses the relationship between \eqref{FBregularization} and \eqref{QP}.

\begin{Proposition}[\bf Tikhonov regularization]\label{Tikhonov} Assume that \eqref{QP} has a solution, and for each $\epsilon>0$ consider the optimization problem \eqref{FBregularization}. If $\bar{u}(\epsilon)$ is a solution to \eqref{FBregularization}, then we have the following assertions:
	\begin{itemize}
		\item[\bf(i)] The limit $\displaystyle\bar{u}:=\lim_{\epsilon\to 0}\bar{u}(\epsilon)$ exists being a solution to \eqref{FBOQPptimization}.
		\item[\bf(ii)] The vector $\bar{x}:=Q\bar{u}+c$ is a solution to \eqref{QP}.
	\end{itemize}
\end{Proposition}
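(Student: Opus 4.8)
The plan is to recognize \eqref{FBregularization} as the classical Tikhonov regularization of the reduced problem \eqref{FBOQPptimization} and to invoke the standard fact that such regularized minimizers select the minimal-norm solution. First I would observe that, since $P_\epsilon=P+\epsilon I$, the regularized objective factors as $\psi_\epsilon(u)=\psi(u)+\frac{\epsilon}{2}\|u\|^2$, where $\psi$ is the (merely convex) cost from \eqref{FBOQPptimization}. Because $P$ is positive-semidefinite (as established in the proof of Lemma~\ref{characterizeQP}), the matrix $P_\epsilon$ is positive-definite, so $\psi_\epsilon$ is strongly convex with modulus at least $\epsilon$; hence $\bar{u}(\epsilon)$ is uniquely determined for each $\epsilon>0$. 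Moreover, the hypothesis that \eqref{QP} has a solution together with Lemma~\ref{characterizeQP} guarantees that the solution set $S$ of \eqref{FBOQPptimization} is nonempty. Since $\psi$ is convex and continuous (indeed of class $\mathcal{C}^{1,1}$ by Lemma~\ref{psiform}), $S$ is closed and convex, so it possesses a unique element $\bar{u}^*$ of minimal norm, namely the projection of the origin onto $S$.

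Next I would establish a uniform bound on the regularized solutions. Comparing $\psi_\epsilon(\bar{u}(\epsilon))\le\psi_\epsilon(\bar{u}^*)$ and using that $\bar{u}^*$ globally minimizes $\psi$ (so $\psi(\bar{u}(\epsilon))\ge\psi(\bar{u}^*)$), the quadratic terms yield $\frac{\epsilon}{2}\|\bar{u}(\epsilon)\|^2\le\frac{\epsilon}{2}\|\bar{u}^*\|^2$, that is, $\|\bar{u}(\epsilon)\|\le\|\bar{u}^*\|$ for every $\epsilon>0$. This keeps the whole family $\{\bar{u}(\epsilon)\}$ bounded.

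To prove convergence, I would take any sequence $\epsilon_k\downarrow 0$ and extract a convergent subsequence $\bar{u}(\epsilon_{k_j})\to\tilde{u}$. Passing to the limit in the comparison inequality $\psi(\bar{u}(\epsilon_{k_j}))\le\psi(\bar{u}^*)+\frac{\epsilon_{k_j}}{2}\|\bar{u}^*\|^2$ and using continuity of $\psi$ gives $\psi(\tilde{u})\le\psi(\bar{u}^*)$, whence $\tilde{u}\in S$; at the same time $\|\tilde{u}\|\le\|\bar{u}^*\|$ forces $\tilde{u}=\bar{u}^*$ by minimality and uniqueness of the minimal-norm point. Since every subsequential limit coincides with $\bar{u}^*$, the entire net converges, proving that $\bar{u}:=\lim_{\epsilon\to 0}\bar{u}(\epsilon)=\bar{u}^*$ exists and solves \eqref{FBOQPptimization}, which is assertion (i). Assertion (ii) then follows immediately from the equivalence in Lemma~\ref{characterizeQP}: as $\bar{u}$ solves \eqref{FBOQPptimization}, the vector $\bar{x}=Q\bar{u}+c$ must solve \eqref{QP}.

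I expect the only genuine subtlety to be the identification of the limit. Boundedness alone supplies subsequential limits, but without the minimal-norm characterization different subsequences could in principle converge to different points of $S$ whenever the solution set is not a singleton. The role of the Tikhonov term is precisely to single out $\bar{u}^*$, so the crucial step is the chain $\|\tilde{u}\|\le\|\bar{u}^*\|\le\|\tilde{u}\|$ combined with uniqueness of the projection of the origin onto the closed convex set $S$; everything else is routine.
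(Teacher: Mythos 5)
Your proof is correct, but it follows a genuinely different route from the paper. The paper's argument is essentially a reduction-plus-citation: it rewrites \eqref{FBOQPptimization} as the variational inequality VI$(\R^n,F)$ with $F:=\nabla\psi$, observes that $\nabla\psi$ is monotone by convexity of $\psi$ and that the solution set is nonempty via Lemma~\ref{characterizeQP}, identifies $\{\bar{u}(\epsilon)\}$ as the Tikhonov trajectory of this VI, and then invokes \cite[Theorem~12.2.3]{JPang} to get existence of the limit and assertion (i); assertion (ii) again comes from Lemma~\ref{characterizeQP}. You instead reprove the relevant special case of that theorem from scratch: the decomposition $\psi_\epsilon=\psi+\frac{\epsilon}{2}\|\cdot\|^2$, the comparison inequality giving $\|\bar{u}(\epsilon)\|\le\|\bar{u}^*\|$ with $\bar{u}^*$ the minimal-norm minimizer, and the subsequence argument forcing every cluster point to equal $\bar{u}^*$ are all sound (closedness and convexity of the solution set $S$ and positive semidefiniteness of $P$ are available exactly as you say). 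What the paper's route buys is brevity and a statement that applies to general monotone VIs; what your route buys is a self-contained elementary proof that, as a bonus, makes explicit that the Tikhonov limit $\bar{u}$ is precisely the least-norm solution of \eqref{FBOQPptimization}, a fact the paper leaves implicit in the citation.
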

\begin{proof} Observe that the optimization problem \eqref{FBOQPptimization} is equivalent to the {\em variational inequality problem} VI($\R^n,F$) written as: find a vector $u\in\R^n$ such that
	$$
 {\langle F(u),z-u\rangle\ge 0\;\mbox{ for all }\;z\in\R^n,}
	$$
	where $F:=\nabla\psi$. Since $\bar{u}(\epsilon)$ is a solution to \eqref{FBregularization}, we get that the family of approximate solutions $\{\bar{u}(\epsilon)\;|\;\epsilon>0\}$ is the {\rm Tikhonov trajectory} of VI($\R^n,F$); see, e.g., \cite[Equation (12.2.2)]{JPang}. It follows from the convexity of $\psi$ that $\nabla\psi\colon\R^n\to\R^n$ is a monotone operator. Since the optimization problem \eqref{QP} has a solution, the set of solutions for VI($\R^n,F$) is nonempty by Lemma~\ref{characterizeQP}. Using \cite[Theorem~12.2.3]{JPang}, we have that the limit $\displaystyle\bar{u}=\lim_{\epsilon\to 0}u(\epsilon)$ exists and solves \eqref{FBOQPptimization}. Finally, assertion (ii) follows immediately from {Lemma~\ref{characterizeQP}}.\end{proof}

\begin{Remark}[\bf generalized Newton algorithm based on Tikhonov regularization] \rm Proposition~\ref{Tikhonov} provides a precise relationship between solutions to \eqref{QP} and solutions to \eqref{FBregularization}. This plays a crucial role in solving \eqref{QP} without assuming the positive-definiteness of $A$. Moreover, Proposition~\ref{Tikhonov} motivates us to develop a generalized Newton-type algorithm based on the Tikhonov regularization to solve the class of optimization problems \eqref{QP}, where $A$ is merely positive-semidefinite. We will pursue this issue in our future research.
\end{Remark}\vspace*{-0.2in}

\section{ {Solving Lasso Problems and Numerical Experiments}}\label{Lassosec}

  This section is devoted to specifying our generalized damped Newton method (GDNM) developed in Section \ref{sec:dampnon} for the basic class of {\em Lasso problems}, where Lasso stands for the {\em Least Absolute Shrinkage and Selection Operator.} Using on the obtained specification of GDNM to solve Lasso problems, we conduct numerical experiments by using our algorithm and the compare the computations with the performances of some major first-order and second-order algorithms applied to solving this class of problems of composite quadratic optimization.

 The basic Lasso problem, known also as the $\ell^1$-{\em regularized least square optimization problem}, was introduced by Tibshirani \cite{Tibshirani} and then has been largely investigated and applied to various issues in statistics, machine learning, image processing, etc. This problem is formulated as:
\begin{eqnarray}\label{Lasso}
\text{minimize }\;\varphi(x):=\frac{1}{2}\|Ax-b\|_2^2+\mu\|x\|_1\quad\text{ subject to }\;x\in\R^n,
\end{eqnarray}
where $A$ is an $m\times n$ matrix, $\mu>0$, $b\in\R^m$, and where  
$$
\|x\|_1:= \sum_{i=1}^{n}|x_i|,\quad\|x\|_2:= \left(\sum_{i=1}^{n} |x_i|^2 \right)^{1/2} \quad \text{for all }\; x=(x_1,x_2,\ldots,x_n).
$$
\color{black} There are other classes of Lasso problems modeled in the quadratic composite form
\begin{eqnarray}\label{leastsquare}
\text{minimize }\;\varphi(x):=\frac{1}{2}\|Ax-b\|_2^2+g(x),\quad x\in\R^n,
\end{eqnarray}
where $A$ is an $m\times n$ matrix, $b\in\R^m$ and $g\colon\R^n\to\overline{\R}$ is a given {\rm regularizer}. More specifically, let us list several well-recognized versions of \eqref{leastsquare} in addition to \eqref{Lasso}:
\begin{itemize}
	\item[\bf(i)] The {\em elastic net regularized} problem, or the {\em Lasso elastic net} problem \cite{HZ05} with
	$$
	g(x):=\mu_1\|x\|_1+\mu_2\|x\|_2^2,
	$$
	where $\mu_1$ and $\mu_2$ are given positive parameters.
	\item[\bf(ii)] The {\em clustered Lasso} problem \cite{she} with
	$$
	g(x):=\mu_1\|x\|_1+\mu_2\sum_{1\le i\le j\le n}|x_i-x_j|,
	$$
	where $\mu_1$ and $\mu_2$ are given positive parameters.
	\item[\bf(iii)] The {\em fused regularized} problem, or the {\em fused Lasso} problem \cite{TSRZK} with
	$$
	g(x):=\mu_1\|x\|_1+\mu_2\|Bx\|_1,
	$$
	where $\mu_1$ and $\mu_2$ are given positive numbers, and where $B$ is the $(n-1)\times n$ matrix
	$$
	Bx:=[x_1-x_2,x_2-x_3,\ldots,x_{n-1}-x_n]\quad\text{for all }\;x\in\R^n.
	$$
\end{itemize}\vspace*{0.05in}

Although the developed Algorithm~\ref{LSQP} allows us to efficiently solve all these Lasso problems, we concentrate here on numerical results for the basic one \eqref{Lasso}. It is easy to see that the Lasso problem \eqref{Lasso} belongs to the quadratic composite class \eqref{QP}. Indeed, we represent \eqref{Lasso} as minimizing the nonsmooth convex function $\varphi(x):=f(x)+g(x)$, where
\begin{equation}\label{fg}
f(x):=\frac{1}{2}\langle\bar{A}x,x\rangle+\langle\bar{b},x\rangle+\bar{\alpha}\quad\text{and }\;g(x):=\mu\|x\|_1
\end{equation}
with $\bar{A}:=A^*A$, $\bar{b}:=-A^*b$, and $\bar{\alpha}:=\frac{1}{2}\|b\|^2$, and where the matrix $\bar{A}=A^*A$ is positive-semidefinite. Observe further that the Lasso problem \eqref{Lasso} always admits an optimal solution; see \cite{Tibshirani}. In order to apply Algorithm~\ref{LSQP} to solving problem \eqref{Lasso}, we begin with providing explicit calculations of the first-order and second-order subdifferentials of the regularizer $g(x)=\mu\|x\|_1$ together with the proximal mapping associated with this function.\vspace*{0.05in}

Using definition \eqref{Prox}, it is not hard to compute the proximal mapping of $g(x)=\mu\|x\|_1$ by
\begin{equation}\label{proxofg}
\big(\text{\rm Prox}_{\gamma g}(x)\big)_i=\begin{cases}
x_i-\mu\gamma&\text{if}\quad x_i>\mu\gamma,\\
0&\text{if}\quad-\mu\gamma\le x_i\le\mu\gamma,\\
x_i+\mu\gamma&\text{if}\quad x_i<-\mu\gamma.
\end{cases}
\end{equation}
Now we compute the first-order and second-order subdifferentials of this function.

\begin{Proposition}[\bf subdifferential calculations] For the regularizer $g(\cdot)= \mu\|\cdot\|_1$ in \eqref{Lasso} we have the subgradient mapping
	\begin{equation}\label{first-order}
	\partial g(x)=\left\{v\in\R^n\;\bigg|\;
	\begin{array}{@{}cc@{}}
	v_j=\text{\rm sgn}(x_j),\;x_j\ne 0,\\
	v_j\in[-\mu,\mu],\;x_j=0
	\end{array}\right\} \quad \text{whenever }\; x \in \R^n.
	\end{equation}
	Further, for each $(x, y)\in\text{\rm gph}\,\partial g$ and $v=(v_1,\ldots,v_n)\in\R^n$, the second-order subdifferential of $g$ is computed by the formula
	\begin{equation}\label{second-order}
	\partial^2 g(x,y)(v)=\Big\{w\in\R^n\;\Big|\;\Big(\frac{1}{\mu}w_i,-v_i\Big)\in G\Big(x_i,\frac{1}{\mu}y_i\Big),\;i=1,\ldots,n\Big\},
	\end{equation}
	where the mapping $G\colon\R^2\tto\R^2$ is defined by
	\begin{equation}\label{G}
	G(t,p):=\begin{cases}
	\{0\}\times\R&\text{\rm if}\quad t\ne 0,\;p\in\{-1,1\},\\
	\R\times\{0\}&\text{\rm if}\quad t=0,\;p\in(-1,1),\\
	(\R_{+}\times\R_{-})\cup(\{0\}\times\R)\cup(\R\times\{0\})&\text{\rm if}\quad t=0,\;p=-1,\\
	(\R_{-}\times\R_{+})\cup(\{0\}\times\R)\cup(\R\times\{0\})&\text{\rm if}\quad t=0,\;p=1,\\
	\emp&\text{\rm otherwise}.
	\end{cases}
	\end{equation}
\end{Proposition}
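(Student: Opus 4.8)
The plan is to exploit the \emph{separability} of the regularizer. Writing $g(x)=\sum_{i=1}^n\mu|x_i|$, both the subgradient mapping and its coderivative decompose coordinatewise, so the whole computation reduces to a one-dimensional analysis of the scalar function $t\mapsto\mu|t|$. For the first-order formula \eqref{first-order} I would note that, since $g$ is a separable sum of proper convex functions, the convex subdifferential sum/separability rule gives $\partial g(x)=\partial(\mu|\cdot|)(x_1)\times\cdots\times\partial(\mu|\cdot|)(x_n)$, and computing $\partial(\mu|\cdot|)$ in one variable is elementary: it equals $\{\mu\,\mathrm{sgn}(t)\}$ for $t\ne0$ and $[-\mu,\mu]$ for $t=0$, which is exactly \eqref{first-order}.

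For the second-order formula \eqref{second-order}, the key object is the graph $\Gamma:=\gph\partial(|\cdot|)\subset\R^2$ of the \emph{unit} ($\mu=1$) scalar subgradient mapping, a planar ``staircase'' consisting of the rightward ray $\{(t,1)\mid t\ge0\}$, the vertical segment $\{(0,p)\mid -1\le p\le1\}$, and the leftward ray $\{(t,-1)\mid t\le0\}$. Because $\gph\partial g$ is a Cartesian product and the limiting normal cone to a product is the product of the limiting normal cones, it suffices to compute $N_\Gamma$ at every point, encode it in the mapping $G$, and read off the coderivative via the defining relation $w_i\in\partial^2(\mu|\cdot|)(x_i,y_i)(v_i)\iff(w_i,-v_i)\in N_{\gph\partial(\mu|\cdot|)}(x_i,y_i)$. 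The rescaling by $\mu$ is handled last: since $\partial(\mu|\cdot|)=\mu\,\partial(|\cdot|)$ and $N_\Gamma$ is a cone invariant under positive scaling, the passage from $\Gamma$ to the $\mu$-graph introduces precisely the factors $1/\mu$ appearing in \eqref{second-order}.

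The computation of $N_\Gamma$ splits into the five cases of \eqref{G}. On the smooth branches---$t\ne0$ with $p=\pm1$ (horizontal rays) and $t=0$ with $p\in(-1,1)$ (interior of the vertical segment)---$\Gamma$ is locally a line, so the normal cone is the orthogonal complement, giving $\{0\}\times\R$ and $\R\times\{0\}$ respectively. \textbf{The main obstacle is the two corner points} $t=0,\ p=\pm1$, where $\Gamma$ is nonconvex and the limiting normal cone strictly exceeds the regular one. At $(0,-1)$ I would compute the tangent cone (the ``up'' and ``left'' directions $\{0\}\times\R_+$ and $\R_-\times\{0\}$), polarize it to obtain the regular normal cone $\R_+\times\R_-$, and then form the limiting normal cone as the union of this quadrant with the limits of regular normals coming from the two adjacent smooth branches, namely $\{0\}\times\R$ and $\R\times\{0\}$; this yields the third case of \eqref{G}, and the symmetric argument at $(0,1)$ yields the fourth.

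Finally, for $(t,p)\notin\Gamma$ the point lies outside $\gph\partial(|\cdot|)$, so there is no normal cone and $G(t,p)=\emp$, which accounts for the last case. Assembling the coordinatewise coderivatives back into the product, inserting the $1/\mu$ factors from the rescaling step, and matching each case against $G$ then completes the verification of \eqref{second-order}. I expect the only genuinely delicate point to be the limiting-normal-cone computation at the corners; the remaining steps are routine once separability and the cone scaling are in place.
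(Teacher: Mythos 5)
Your proposal is correct, but it takes a genuinely different route from the paper: the paper's entire proof is a citation (``These computations follow from [Proposition~8.1, Khanh--Mordukhovich--Phat]''), whereas you supply a self-contained derivation. Your two key ingredients are sound: (1) separability of $g$ plus the product rule for the \emph{limiting} normal cone (which, unlike the Clarke version, does satisfy $N_{\Omega_1\times\cdots\times\Omega_n}=N_{\Omega_1}\times\cdots\times N_{\Omega_n}$, up to the harmless coordinate permutation identifying $\gph\partial g$ with a product of planar graphs), and (2) the direct computation of $N_\Gamma$ on the scalar staircase, where the only nontrivial work is at the two corners: there the regular normal cone is the polar of the tangent cone (the quadrant $\R_+\times\R_-$ at $(0,-1)$, resp.\ $\R_-\times\R_+$ at $(0,1)$), and the limiting cone adds the limits $\{0\}\times\R$ and $\R\times\{0\}$ of regular normals along the two adjacent smooth branches --- exactly reproducing the third and fourth cases of $G$. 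Your handling of the $\mu$-rescaling is also right, though it deserves one more sentence than you give it: the graph of $\partial(\mu|\cdot|)$ is the image of $\Gamma$ under $L=\mathrm{diag}(1,\mu)$, so its normal cone at $(t,y)$ is $(L^{*})^{-1}N_\Gamma(t,y/\mu)=\{(a,b/\mu):(a,b)\in N_\Gamma(t,y/\mu)\}$, and the membership $(w,-v)$ in this set is equivalent to $(w/\mu,-v)\in N_\Gamma(t,y/\mu)$ precisely because $N_\Gamma$ is a cone --- which is the form appearing in the statement. Two minor points: your computation yields $v_j=\mu\,\mathrm{sgn}(x_j)$ for $x_j\ne 0$, which is the correct value (the displayed formula in the statement omits the factor $\mu$, inconsistently with the case $x_j=0$); and the mapping $G$ as printed assigns $\{0\}\times\R$ to all $(t,p)$ with $t\ne0$, $p\in\{\pm1\}$, including points off $\Gamma$, but this is immaterial since the formula is only invoked at points of $\gph\partial g$. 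What your approach buys is a verifiable, citation-free argument; what the paper's buys is brevity.
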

\begin{proof} {These computations follow from \cite[Proposition 8.1]{BorisKhanhPhat}}.\end{proof}

The next theorem provides an efficient condition on \eqref{Lasso} expressed entirely in terms of its given data to ensure a global superlinear convergence of Algorithm~\ref{LSQP} for solving \eqref{Lasso}.

\begin{Theorem}[\bf solving Lasso] \label{solveLasso} Considering the Lasso problem \eqref{Lasso}, suppose that the matrix $A^*A$ is positive-definite. Then we have:
	\begin{itemize}
		\item[\bf(i)] Algorithm~{\rm\ref{LSQP}} is well-defined, and the sequence of its iterates $\{y^k\}$ globally converges at least Q-superlinearly to $\bar{y}$ as $k\to\infty$.
		\item[\bf(ii)] The vector $\bar{x}:=Q\bar{y}+c$ is a unique solution to \eqref{Lasso} being a tilt-stable local minimizer of the cost function $\varphi$.
	\end{itemize}
\end{Theorem}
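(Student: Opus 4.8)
The plan is to recognize the Lasso problem \eqref{Lasso} as the special instance of the quadratic composite problem \eqref{QP} singled out by the data \eqref{fg}, and then to invoke Theorem~\ref{solvingQP}. Writing $\ph=f+g$ with $f(x)=\frac12\la\bar{A}x,x\ra+\la\bar{b},x\ra+\bar{\alpha}$, where $\bar{A}=A^*A$, $\bar{b}=-A^*b$, and $g(x)=\mu\|x\|_1$, the positive-definiteness hypothesis on $A^*A$ is precisely the positive-definiteness of the quadratic matrix required by Theorem~\ref{solvingQP}. Thus parts (i) and (ii) of that theorem apply verbatim, after the harmless relabeling of the algorithmic iterates $u^k\leftrightarrow y^k$ and of the limit $\ou\leftrightarrow\oy$: Algorithm~\ref{LSQP} is well-defined, its iterates converge globally (at least R-linearly) to some $\oy$, and $\ox:=Q\oy+c$ is the unique solution of \eqref{Lasso}, which is moreover a tilt-stable local minimizer of $\ph$. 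This already secures well-definedness, global convergence, uniqueness, and tilt stability; what remains is to upgrade the rate from R-linear to Q-superlinear.

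For the superlinear rate I would appeal to the \emph{Furthermore} part of Theorem~\ref{solvingQP}, which requires two inputs at the limit: that $\partial g$ be semismooth$^*$ at $(\ox,\ov)$ with $\ov:=-A^*A\ox+A^*b$ (note $\ov=-\nabla f(\ox)\in\partial g(\ox)$ by Fermat's rule), and that one of the supplementary conditions (a) or (b) hold. First I would verify the semismooth$^*$ property. From the explicit formula \eqref{first-order}, the graph of $\partial g$ is a finite union of polyhedral convex sets, namely a Cartesian product over the coordinates of the one-dimensional pieces $\{t\ne 0,\,v=\text{\rm sgn}(t)\}$ and $\{t=0,\,v\in[-\mu,\mu]\}$. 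By the structural fact recorded immediately after Definition~\ref{semi*}, that semismoothness$^*$ holds for subdifferential/normal-cone mappings generated by convex polyhedral functions, the mapping $\partial g$ is semismooth$^*$ at every point of its graph, in particular at $(\ox,\ov)$.

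To close the argument I would use condition (b) rather than (a), since (b) imposes no restriction on the line-search parameter $\sigma$ and therefore matches the unconditional superlinear claim of the theorem. Here I would observe that $g(x)=\mu\|x\|_1$ is a convex piecewise linear (hence piecewise linear-quadratic) function, so it is twice epi-differentiable at every point of its domain for every subgradient by the standard theory of Rockafellar and Wets. Consequently $g$ is twice epi-differentiable at $\ox$ for $\ov$, condition (b) is satisfied, and the \emph{Furthermore} part of Theorem~\ref{solvingQP} delivers the global Q-superlinear convergence of $\{y^k\}$ to $\oy$ asserted in (i); assertion (ii) has already been obtained above.

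Since the proof is in essence a specialization, there is no hard analysis to perform: the substantive content is merely the verification, from the polyhedral structure of $\|\cdot\|_1$ displayed in \eqref{first-order}, of the two regularity hypotheses (semismoothness$^*$ and twice epi-differentiability). The one point demanding a little care, and the most likely source of a slip, is tracking the correspondence between the reformulated data $(\bar{A},\bar{b})$ of the Lasso instance and the matrix $A$ and vector $b$ appearing in the abstract statement of Theorem~\ref{solvingQP}, so that $\ov=-\nabla f(\ox)=-A^*A\ox+A^*b$ is correctly identified as the subgradient at which both properties of $g$ must be checked.
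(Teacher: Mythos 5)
Your proof is correct and follows essentially the same route as the paper: both reduce Lasso to Theorem~\ref{solvingQP} via the data \eqref{fg}, verify semismoothness$^*$ of $\partial g$ from the fact that $\gph\partial g$ is a finite union of closed convex (polyhedral) sets, and verify condition (b) via the piecewise linear-quadratic structure of $\mu\|\cdot\|_1$ (the paper cites \cite[Proposition~13.9]{Rockafellar98} for twice epi-differentiability). Your extra care in identifying the reference point $(\ox,\ov)$ with $\ov=-\bar{A}\ox-\bar{b}=-A^*A\ox+A^*b$ is a correct and slightly more explicit bookkeeping step than the paper provides.
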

\begin{proof} It follows from \eqref{first-order} that the graph of $\partial g$ is the union of finitely many closed convex sets, and hence $\partial g$ is semismooth$^*$ on its graph. Furthermore, $g$ is proper, convex, and piecewise linear-quadratic on $\R^n$. Then \cite[Proposition~13.9]{Rockafellar98} ensures that $g$ is twice epi-differentiable on $\R^n$. Applying Theorem~\ref{solvingQP}, we arrive at all the conclusions of Theorem~\ref{solveLasso}.\end{proof}

To run Algorithm~\ref{LSQP}, we need to explicitly determine the sequences $\{v^k\}$ and $\{d^k\}$ generated by this algorithm. Using \eqref{proxofg}, \eqref{first-order}, and \eqref{second-order} gives us the following expressions for all the components $i=1,\ldots,n$ of these vectors:
$$
\left(v^k\right)_i=\begin{cases}
y_i-\mu\gamma&\text{if}\quad y_i>\mu\gamma,\\
0&\text{if}\quad-\mu\gamma\le y_i\le\mu\gamma,\\
y_i+\mu\gamma&\text{if}\quad y_i<-\mu\gamma,
\end{cases}
$$
$$
\begin{cases}
\big(Pd^k+\nabla\psi(y^k)\big)_i=0&\text{if}\quad\left(v^k\right)_i\ne 0,\\
\big(Qd^k+\nabla\psi(y^k)\big)_i=0&\text{if}\quad\left(v^k\right)_i=0.
\end{cases}
$$
\begin{Remark}[\bf Newton descent directions for Lasso] \rm Let us emphasize that the algorithm directions $d^k$ can be computed through solving a system of linear equations for each $k\in\N$. Indeed, for the sequence $\{d^k\}$ generated by Algorithm~\ref{LSQP}, denote by $P_i$ and $Q_i$ are the $i$-th rows of the matrices $P$ and $Q$, respectively. Define
	$$
	(X^k)_i:=\begin{cases}
	P_i&\text{if}\quad v_i\ne 0,\\
	Q_i&\text{if}\quad v_i=0.
	\end{cases}
	$$
	Then $d^k$ is a solution to the system of linear equations $X^k d=-\nabla\psi(y^k)$.
\end{Remark}

{Now we are in a position to conduct numerical experiments for solving the Lasso problem \eqref{Lasso} by using our generalized damped Newton method (GDNM) via Algorithm~\ref{LSQP}. The obtained calculations are compared with those obtained by implementing the following highly recognized first-order and second-order algorithms:
\begin{itemize}
	\item[\bf (i)] The {\em Alternating Direction Methods of
		Multipliers\footnote{\href{https://web.stanford.edu/~boyd/papers/admm/lasso/lasso.html}{https://web.stanford.edu/~boyd/papers/admm/lasso/lasso.html}}} (ADMM); see \cite{BPCPE,gabay,glomar}.
	\item[\bf (ii)] The {\em Accelerated Proximal Gradient Method\footnote{\href{https://github.com/bodono/apg}{https://github.com/bodono/apg}}} (APG); see \cite{Nesterov,nest}.
	\item[\bf (iii)] The {\em Fast Iterative Shrinkage-Thresholing
		Algorithm\footnote{\href{https://github.com/he9180/FISTA-lasso}{https://github.com/he9180/FISTA-lasso}}} (FISTA); see \cite{BeckTebou}.
	\item[\bf(iv)] The {\em Semismooth Newton Augmented Lagrangian
		Method\footnote{\href{https://www.polyu.edu.hk/ama/profile/dfsun/}{https://www.polyu.edu.hk/ama/profile/dfsun/}}} (SSNAL) developed in \cite{lsk}.
\end{itemize}
All the numerical experiments are conducted on a desktop with 10th Gen Intel(R) Core(TM) i5-10400 processor (6-Core, 12M Cache, 2.9GHz to 4.3GHz) and 16GB memory. All the codes are written in MATLAB 2016a. Our numerical experiments are conducted with the test instances $(A, b)$ in \eqref{Lasso} generated randomly following the Matlab commands
\begin{align*}
    A={\rm randn}(m,n);\quad b={\rm randn}(n,1).
\end{align*}
In order to run Algorithm~\ref{LSQP} for solving Lasso problems, the matrix $A^*A$ needs to be positive-definite due to Theorem~\ref{solveLasso}. Since $\text{\rm rank}(A^*A)=\text{\rm rank}A$, the matrix $A^*A$ is singular if $m<n$. Therefore, a necessary condition for the positive-definite of $A^*A$ is that $m\ge n$, and thus we only test datasets in which the matrix $A$ has more rows than columns, or its rows and columns are equal. Note that the modes with $m\ge n$ appear in practical applications; see, e.g., \cite{EHJT04} with applications to diabetes studies and \cite{BeckTebou} with applications to image processing. The GDNM code is publicly available on the website\footnote{\href{https://github.com/he9180/GDNM/}{https://github.com/he9180/GDNM/}}.}

\medskip
The initial points in all the experiments are set to be the zero vector. The following \textit{relative KKT residual} $\eta_k$ in
\eqref{KKT} suggested in \cite{lsk} is used to  measure the accuracy of an approximate optimal solution $x^k$ for \eqref{Lasso}:
\begin{equation}\label{KKT}
\eta_k := \frac{\|x^k - \text{\rm Prox}_{\mu\|\cdot\|_1}(x^k-A^*(Ax^k-b)) \|}{1+\|x^k\|+\|Ax^k-b\|}.
\end{equation}
We stop the algorithms in our experiments when either the condition $\eta_k <10^{-6}$ is satisfied, or they reach the maximum computation time of $6000$ seconds. For testing purpose, the regularization parameter $\mu$ in the Lasso problem \eqref{Lasso} is
chosen as $10^{-3}$ or as in \cite{lsk}, i.e.
\begin{equation}\label{para}
\mu= 10^{-3}\|A^*b\|_{\infty}, \quad \text{where }\; \|x\|_{\infty}:= \max\{|x_1|,|x_2|,\ldots, |x_n| \}, x= (x_1,x_2,\ldots,x_n).
\end{equation} 
The achieved numerical results are presented in Table \ref{table 1} and Table \ref{table 2}. In these tables, ``CPU time" stands for the time needed to achieve the prescribed accuracy of approximate solutions (the smaller the better). As we can see from the results presented in Tables \ref{table 1} and \ref{table 2}, GDNM is more efficient in the cases where $m >>n$ and $n$ is not large, which confirms the need of positive definiteness of the matrix $A^*A$ for the superlinear convergence. On the other hand, ADMM performs well in all tests while SSNAL is more efficient for large-scale datasets.
\vspace*{0.05in}
\begin{sidewaystable}[!htbp]
	\begin{minipage}{\textheight}\small
		\caption{$\mu=10^{-3}$ }\label{table 1}
		\centering
		\begin{tabular}{llllllllllllll} 
			\hline
			\multicolumn{2}{c}{Problem size} &  & \multicolumn{5}{c}{Number of iterations}               &  & \multicolumn{5}{c}{CPU time}                    \\ 
			\hline
			m    & n                         &  & SSNAL & GDNM & ADMM  & APG    & FISTA  &  & SSNAL   & GDNM    & ADMM   & APG     & FISTA    \\
			\hline
			1024 & 256                       &  & 4     & 4    & 9     & 42     & 133    &  & 0.13    & 0.09    & 0.02   & 0.04    & 0.25     \\
			1024 & 1024                      &  & 30    & 22   & 12192 & 172326 & 190461 &  & 10.68   & 1.23    & 35.71  & 485.51  & 2046.34  \\
			4096 & 256                       &  & 4     & 4    & 9     & 24     & 44     &  & 0.14    & 0.11    & 0.03   & 0.05    & 0.34     \\
			4096 & 4096                      &  & 1306  & 281  & 12571 & 114879 & 38912  &  & 4163.54 & 6000.00 & 775.59 & 6000.00 & 6000.00  \\
			\hline
		\end{tabular}
		
		\medskip
		
		\caption{$\mu=10^{-3}\left\|A^*b\right\|_{\infty}$ }\label{table 2}
		\begin{tabular}{llllllllllllll} 
			\hline
			\multicolumn{2}{c}{Problem size} &  & \multicolumn{5}{c}{Number of iterations}           &  & \multicolumn{5}{c}{CPU time}               \\ 
			\hline
			m    & n                         &  & SSNAL & GDNM & ADMM & APG  & FISTA &  & SSNAL  & GDNM   & ADMM  & APG   & FISTA    \\
			\hline
			1024 & 256                       &  & 4     & 5    & 123  & 45   & 133   &  & 0.62   & 0.11   & 0.03  & 0.03  & 0.24     \\
			1024 & 1024                      &  & 17    & 172  & 174  & 2638 & 26431 &  & 2.38   & 10.72  & 0.56  & 7.41  & 273.84   \\
			4096 & 256                       &  & 4     & 4    & 248  & 26   & 44    &  & 0.22   & 0.12   & 0.15  & 0.05  & 0.32     \\
			4096 & 4096                      &  & 18    & 355  & 343  & 1797 & 32412 &  & 149.57 & 668.32 & 23.68 & 95.60 & 5121.51  \\
			\hline
		\end{tabular}

	\end{minipage}
\end{sidewaystable}

\newpage 
\section{Concluding Remarks and Further Research}\label{sec:conclusion}

This paper proposes and develops new globally convergent algorithms
of the damped Newton type to solve some classes of nonsmooth
optimization problems addressing minimization of ${\cal C}^{1,1}$
objectives and problems of quadratic composite optimization with
extended-real-valued regularizers, which include nonsmooth problems
of constrained optimization. We verify well-posedness of the
proposed algorithms and their linear and superlinear convergence
under rather nonrestrictive assumptions. Our approach is based on
advanced machinery of second-order variational analysis and
generalized differentiation. The obtained results are applied to
some classes of optimization problems that arise in machine
learning, statistics, and related areas with the efficient
implementation to solving the well-recognized Lasso problems. The
numerical experiments conducted to solve {an important class of nonsmooth} Lasso problems by using the suggested algorithm are compared in
detail with the corresponding calculations by using some other
first-order and second-order algorithms.

Our future research includes efficient calculations of second-order
subdifferentials and proximal mappings used in this paper for
broader classes of convex and nonconvex problems with further
applications to practically important models of machine learning,
statistics, etc. We also intend to establish a global superlinear
convergence of our damped generalized Newton algorithms for problems
of quadratic composite optimization with extended-real-valued
regularizers without the positive-definiteness requirement on the
quadratic term.\vspace*{-0.1in}

\section*{Acknowledgments}\label{sec:acknowledgement}   The authors are grateful to two anonymous referees for their helpful remarks and comments that allowed us to significantly improve the original manuscript. Our thanks also go to Alexey Izmailov for his useful remarks on the
algorithm developed in Section~3 and to  Michal Ko\v cvara and Defeng Sun
for helpful discussions on numerical experiments to solve
Lasso problems.

\small

\end{document}